\documentclass[final]{siamart220329}

\usepackage{mathtools}
\usepackage{amsfonts}
\usepackage{graphicx,booktabs,array}
\usepackage{epstopdf}
\usepackage{float}
\usepackage{arydshln}
\usepackage{makecell}
\usepackage{tikz}
\usepackage{subfigure}
\usepackage{algorithmic}
\Crefname{ALC@unique}{Line}{Lines} 
\ifpdf
\DeclareGraphicsExtensions{.eps,.pdf,.png,.jpg}
\else
\DeclareGraphicsExtensions{.eps}
\fi




\usepackage{textcase}
\let\Up\MakeUppercase
\usepackage{cite} 
\usepackage{dsfont}

\usepackage{enumerate}
\usepackage[shortlabels]{enumitem}

\usepackage{xcolor}
\definecolor{darkred}{RGB}{175, 0, 0}
\definecolor{niceblue}{RGB}{0, 102, 204} 
\definecolor{nicegreen}{RGB}{76, 175, 80} 
\definecolor{LightGrey}{RGB}{211, 211, 211} 
\definecolor{darkred}{rgb}{0.8, 0.0, 0.0}


\usepackage{todonotes}

\usepackage{tikz}
\usetikzlibrary{shapes, positioning, arrows, bending, calc}

\newsiamremark{remark}{Remark}
\newsiamremark{hypothesis}{Hypothesis}
\crefname{hypothesis}{Hypothesis}{Hypotheses}
\newsiamthm{claim}{Claim}
\newsiamthm{example}{Example}

\newcommand{\N}{\mathbb{N}}
\newcommand{\R}{\mathbb{R}}

\newcommand{\argmin}{\operatorname{argmin}}

\newcommand{\bu}{{\boldsymbol{u}}}
\newcommand{\bv}{{\boldsymbol{v}}}
\newcommand{\bx}{{\boldsymbol{x}}}
\newcommand{\by}{{\boldsymbol{y}}}

\newcommand{\graphLdelta}{\Delta_{\Psi^\delta_\Theta}}
\newcommand{\graphLdeltak}{\Delta_{\Psi^{\delta_k}_{\Theta_k}}}
\newcommand{\graphL}{\Delta_{\bx_0}}
\newcommand{\sol}{\bx^{\delta}_{\Psi^\delta_\Theta,\alpha}}

\newcommand{\gt}{\bx_{\textnormal{gt}}}

\newcommand{\graphLa}{$\texttt{graphLa+}\Psi$ }

\definecolor{darkred}{rgb}{0.8, 0.0, 0.0}


\headers{\graphLa}{Bianchi et al.}

\title{A data-dependent regularization method based on the graph Laplacian}

\author{Davide Bianchi{\thanks{\MakeLowercase{\Up{S}chool of \Up{M}athematics (\Up{Z}huhai), \Up{S}un \Up{Y}at-sen \Up{U}niversity, \Up{Z}huhai, 519082, \Up{C}hina (\email{bianchid@mail.sysu.edu.cn}). }}}
			\and Davide Evangelista{\thanks{\MakeLowercase{\Up Department of \Up Computer \Up Science and \Up Engineering, \Up University of \Up Bologna, \Up Bologna, 40126, \Up Italy\\ (\email{davide.evangelista5@unibo.it}, \email{elena.loli@unibo.it}).}}}
			\and Stefano Aleotti{\thanks{\MakeLowercase{\Up Department of \Up Science and \Up High \Up Technology, \Up University of \Up Insubria, \Up Como, 22100, \Up Italy 
					\\ (\email{saleotti@uninsubria.it}, \email{marco.donatelli@uninsubria.it}).}}}
	\and Marco Donatelli\footnotemark[3]
\and Elena Loli Piccolomini\footnotemark[2]
\and Wenbin Li{\thanks{\MakeLowercase{\Up{S}chool of \Up{S}cience, \Up{H}arbin \Up{I}nstitute of \Up{T}echnology, \Up{S}henzhen,  \Up{S}henzhen, 518055, \Up{C}hina \\(\email{liwenbin@hit.edu.cn})}. \funding{\MakeLowercase{\Up{D}avide \Up{B}ianchi is supported by \Up{NSFC} (\Up{G}rant \Up{N}o.~12250410253) and by the \Up{S}tartup \Up{F}und of \Up{S}un \Up{Y}at-sen \Up{U}niversity.  \Up{M}arco \Up{D}onatelli is partially supported by \Up{MIUR - PRIN 2022 N.2022ANC8HL} and \Up{GNCS-INdAM}. \Up{W}enbin \Up{L}i  is supported by the \Up{F}undamental \Up{R}esearch \Up{F}unds for the \Up{C}entral \Up{U}niversities (grant no. \Up{HIT.OCEF.2024017}), and the \Up{U}niversity \Up{I}nnovative \Up{T}eam \Up{P}roject of \Up{G}uangdong (grant no. \Up{2022KCXTD039}).}}}}}

\usepackage{amsopn}

\ifpdf
\hypersetup{
	pdftitle={The \graphLa method},
	pdfauthor={D. Bianchi, D. Evangelista, M. Donatelli,  E. Loli Piccolomini, and W. Li}
}
\fi




\begin{document}
	\maketitle
	\begin{abstract}
			We investigate a variational method for ill-posed problems, named $\texttt{graphLa+}\Psi$, which embeds a graph Laplacian operator in the regularization term. The novelty of this method lies in constructing the graph Laplacian based on a preliminary approximation of the solution, which is obtained using any existing reconstruction method $\Psi$ from the literature. As a result, the regularization term is both dependent on and adaptive to the observed data and noise.   We demonstrate that  $\texttt{graphLa+}\Psi$ is a regularization method and  rigorously establish both its convergence and stability properties.
	
	We present selected numerical experiments in 2D computerized tomography, wherein we integrate the $\texttt{graphLa+}\Psi$ method with various reconstruction techniques $\Psi$, including Filter Back Projection (\texttt{graphLa+FBP}), standard Tikhonov (\texttt{graphLa+Tik}), Total Variation (\texttt{graphLa+TV}), and a trained deep neural network (\texttt{graphLa+Net}). The $\texttt{graphLa+}\Psi$ approach significantly enhances the quality of the approximated solutions for each method $\Psi$. Notably, \texttt{graphLa+Net} is outperforming, offering a robust and stable application of deep neural networks in solving inverse problems.

	\end{abstract}

\begin{keywords}
ill-posed problems;  non-local operators; graph Laplacian; deep learning; deep neural networks; medical imaging
\end{keywords}

\begin{MSCcodes}
47A52; 05C90; 68T07; 92C55
\end{MSCcodes}

\section{Introduction}\label{sec:introduction}
We consider the model equation
\begin{equation}\label{eq:Model_Equation}\tag{ME}
	K\bx = \by^\delta, 
\end{equation}
where $K \colon X\simeq \R^n \to Y\simeq \R^m$ represents a discretized version of a linear operator that is inherently ill-posed. Given a fixed $\gt \in X$ and $\by\coloneqq K\gt$, we want to recover a good approximation of the ground-truth $\gt$ from a noisy observation $\by^\delta$ of $\by$, that is
\begin{equation}\label{eq:problem_formulation}
	\by^\delta \coloneqq  \by + \boldsymbol{\eta}, \qquad \|\by^\delta -  \by\|\leq \delta,
\end{equation}	
where $\boldsymbol{\eta}$ is a random perturbation,  typically unavoidable in practical scenarios, and $\delta>0$ is the noise intensity.

The ill-posedness of $K$ and the presence of noise force the introduction of a regularization strategy to solve our model equation. A  standard variational method for \cref{eq:Model_Equation} reads
\begin{equation}\label{model_eq2}
	\bx^\delta_\alpha \in	\underset{\bx \in X}{\argmin} \left\{ \frac{1}{2}\|K\bx - \by^\delta \|_2^2 + \alpha\|D\bx\|_1 \right\},
\end{equation}
where $D$ is a linear mapping, characterized by the property that $\ker(D) \cap \ker(K) =~\{\mathbf{0}\}$, see \cite[Chapter 8]{engl1996regularization} and \cite{morozov2012methods}. In this context, the term $\frac{1}{2}\|K\mathbf{x} - \mathbf{y}^\delta \|_2^2$ quantifies the fidelity of the reconstruction, the $\|D\mathbf{x}\|_1$ component serves as regularization term, and $\alpha >0$ balances the trade-off between data fidelity and the regularization effect. Formulation \eqref{model_eq2} is a special case of generalized Tikhonov.

Typical choices for $D$ include linear differential operators, especially for denoising applications involving signals that are nearly piecewise-constant, such as those encountered in imaging \cite{hansen2006deblurring}.

In the classical  approach, $D$ is a discretization of Euclidean first or second-order linear differential operators, see e.g. \cite{ROF92,DR14}. 
However, the last decade has seen an increasing interest in nonlocal models and techniques from graph theory \cite{gilboa2007nonlocal,gilboa2009nonlocal,Peyre2008nonlocal,arias2009variational}. This approach has been further investigated in the context of image deblurring and computerized tomography problems in \cite{bianchi2021graph_approximation,buccini2021graph,bianchi2021graph,lou2010image,zhang2010bregmanized}.

In essence, embedding a graph-based operator $D$ in the regularization term acts as a guiding mechanism for the overall regularization process. This operator helps in identifying the ``correct" neighborhood to concentrate the reconstruction efforts on, by capturing specific features that can be inferred from the observed signal $\by^\delta$.

Initially, a graph structure $G$ is constructed from a discretized signal to incorporate features such as interfaces and discontinuities. This discrete space, which heavily depends on the signal itself, can provide more insights into the neighborhood where $\gt$ resides than a flat manifold like the Euclidean space.

Then, by choosing a suitable graph operator $D$, the optimization process in Equation \eqref{model_eq2} is oriented towards the (supposed) neighborhood of $\gt$.

 The key point is to construct the graph from a signal that closely approximates the primary features of $\gt$. In \cite{lou2010image}, it was observed that generating the graph $G$ directly from the observed and noisy data $\by^\delta$ results in poor outcomes for imaging tasks such as deblurring or tomographic reconstruction. This is because $\by^\delta$ exists in a different domain compared to $\gt$. To address this, the authors suggested an initial preprocessing step, transforming $\by^\delta$ to $\Psi(\by^\delta)$, and subsequently constructing a graph from $\Psi(\by^\delta)$. This preprocessing step involves a reconstruction map $\Psi \colon Y \to X$,  from the space of observations $Y$ to the domain $X$ where $\gt$ lives. This can be achieved, for example, by employing a standard Tikhonov filter \cite{engl1996regularization} or the Filter Back Projection (FBP) method \cite{kak2001principles}, depending on the inverse problem to handle. In the same spirit and independently,  iterative schemes incorporating updates to graph weights were introduced in \cite{Peyre2008nonlocal,zhang2010bregmanized,arias2009variational}.
 
 Taking inspiration from \cite{lou2010image}, we introduce the novel method \graphLa. The initial preprocessing step involves selecting a family of reconstructor maps 
 $$\Psi_\Theta \colon Y \to X,$$ 
 where $\Theta= \Theta(\delta, \by^\delta)$ is a family of parameters that can depend on $\delta$ and $\by^\delta$. It is important to note that the pair $(\Psi_\Theta, \Theta)$ is very general and may not be a convergent regularizing method.

 Then, a graph $G$ is built from  $\Psi^\delta_\Theta\coloneqq\Psi_\Theta (\by^\delta)\in X$, as well as its associated graph Laplacian operator $\graphLdelta$ (see Section \ref{ssec:graph_theory} for a proper definition). Finally, we find a minimizer of \eqref{model_eq2}  with $D= \graphLdelta$, that is,
 \begin{equation}\label{eq:graphLa+}
 	\sol \in	\underset{\bx \in X}{\argmin} \left\{ \frac{1}{2}\|K\bx - \by^\delta \|_2^2 + \alpha\|\graphLdelta\bx\|_1 \right\}.
 \end{equation}

Let us remark that the regularization term in \eqref{eq:graphLa+}  intrinsically relies on both the noise intensity $\delta$ and the observed data $\by^\delta$. This dependency introduces a layer of complexity into the analysis, making it highly non-standard.

\begin{figure}[htb]
	\centering
	\begin{tikzpicture}
    \node[draw, ellipse, minimum width=4cm, minimum height=5.5cm, label=below:{$X$}] (X) at (0,0) {};
    \node[draw, ellipse, minimum width=4cm, minimum height=5.5cm, label=below:{$Y$}] (Y) at (6,0) {};

\draw[fill=LightGrey, smooth cycle, tension=0.6] plot coordinates {
	(-1, -2) (-0.6, -2.5)   (0.3, -2)  (0.2, -1) (-1, -1)
};

    \node[circle, fill, inner sep=2pt, label=below:{$\gt$}] (xgt) at (-0.5,-1.8) {}; 
    \node[circle, fill, inner sep=2pt, label=left:{$\bx_0$}] (x0) at (-0.9, -1.3) {};
    \node[circle, fill, inner sep=2pt, label=right:{$\bx_{\textnormal{sol}}$}] (xdagger) at (-0.5, -1.5) {};
    \node[circle, fill, inner sep=2pt, label=above right:{$\Psi^\delta_\Theta$}] (psi) at (-0.8, 1.8) {};
    \node[circle, fill, inner sep=2pt, label=above right:{$\sol$}] (sol) at (0.3, 0.5) {};
    
    \node[circle, fill, inner sep=2pt, label=left:{$y$}] (y) at (5.5,0) {}; 
    \node[circle, fill, inner sep=2pt, label=right:{$y^{\delta}$}] (ydelta) at (6,1.5) {}; 

\draw[->, >=stealth] (xgt) to[bend right=30] node[midway, fill=white] {$K$} (y); 
\draw[->, >=stealth] (y) -- (ydelta) node[midway, fill=white, font=\small] {$\eta$};

\draw[->, >=stealth] (ydelta) to[] node[midway, fill=white, font=\small] {$\Psi$} (psi); 
\draw[->, >=stealth] (ydelta) to[] node[pos=0.55, fill=white, font=\small] {\texttt{graphLa+$\Psi$}} (sol); 


\draw[->, >=stealth] (psi) -- +(0.2,-0.5) -- +(-0.5,-0.8) -- + (-0.5,-1.2) -- +(-0,-1.4) -- +(-1,-2) -- 
+(0.5,-1.5) -- +(-0.6,-2.4) -- +(-0,-2.6) --(x0);
\node[font=\tiny] at (-0.8, 0.75) {$\delta\to0$};

\draw[->, >=stealth, darkred] (sol) .. controls (1.3, -0.5) and (-1, -0.5) .. (xdagger);
\node[font=\tiny] at (0.8, -0.5) {\textcolor{darkred}{$\delta\to0$}};




\draw[->, >=stealth, darkred] (psi) -- (sol);
\end{tikzpicture}
	\caption{A schematic representation of the \textnormal{$\texttt{graphLa+}\Psi$} method. The reconstructors $\Psi_\Theta$ do not necessarily need to be a regularization method, and this is represented by the piecewise linear path of $\Psi^\delta_\Theta$ as $\delta$ goes to $0$. However, when combined with the graph Laplacian in the Tikhonov method \eqref{model_eq2}, it generates a convergent and stable regularization operator, that is, \textnormal{$\texttt{graphLa+}\Psi$}, which is represented by the smooth red path. See \Cref{sec:model_setting,sec:regularization} for more details on the notation.}
	\label{fig:graphLa+_schema}
\end{figure}
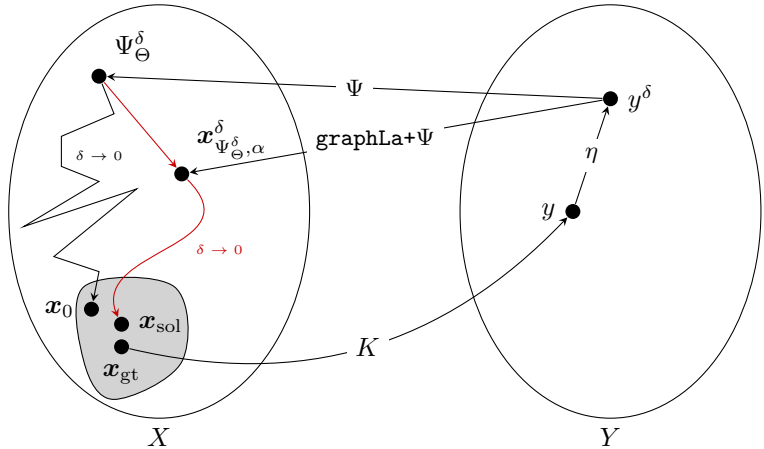

In this study, we demonstrate that under certain, albeit very weak, hypotheses, \graphLa is a convergent and stable regularization method in the limit as $\delta \to 0$. To the best of our knowledge, this is the first time that such a theoretical analysis has been carried out. 

Properties of the graph Laplacian are the key ingredients. Informally speaking, it helps to ``chain'' the reconstructors $\Psi_\Theta$ in the original and well-established regularization method \cref{model_eq2}. See  \Cref{fig:graphLa+_schema} for a visual representation of the method.

For these reasons, and due to the minimal assumptions about $\Psi_\Theta$, it becomes feasible to select reconstructors that may not be convergent regularizing methods or those whose regularization properties lack rigorous proof yet show empirical effectiveness in certain applications. Owing to the influence of the graph Laplacian, the overall \graphLa method maintains regularization and stability regardless, cfr. Section~\ref{ssec:theoretical_restults}.

In this context, a very interesting choice for $\Psi_\Theta$ is represented by the class of Deep Neural Networks (DNNs). They are highly nonlinear operators usually characterized by a paramount number of trainable parameters. Generally speaking, the set of parameters is trained by feeding the DNNs with a large number of data and minimizing a loss function. Thanks to the exponential increase of dedicated computational power, DNNs achieved state of the art performances in various applications. Particularly in recent years, they have been extensively studied in the field of inverse problems, see \cite{arridge2019solving} and references therein.    

However, applying DNNs to ill-posed inverse problems presents notable drawbacks, most importantly instability and their ``black-box" nature. Firstly, DNNs are often sensitive to data perturbations and have a tendency to produce hallucinations, i.e. false yet realistic-looking artifacts, see \cite{antun2020instabilities,colbrook2022difficulty}. Secondly, the complexity of their internal mechanisms, which involve millions of parameters and nonlinear mappings, makes them challenging to understand or explain \cite{tjoa2020survey}.

These shortcomings contribute to a general mistrust in DNNs, particularly in real-world situations where accuracy, stability, and reliability are paramount, such as in medical applications. To address these concerns, we propose the integration of the \graphLa method with a DNN, named \texttt{graphLa+Net}. This combination leverages the regularization properties of \graphLa, resulting in a stable, convergent, and mathematically robust regularization method that incorporates a DNN, effectively countering the aforementioned flaws. An initial demonstration of the potential of \texttt{graphLa+Net} is presented in \cite{bianchi2023graph}.

Our strategy with \texttt{graphLa+Net} follows other studies seeking to embed DNNs, or general learned regularizers, within a rigorous mathematical framework, see \cite{li2020nett,schwab2019deep,mukherjee2020learned,bianchi2023uniformly,alberti2021learning}. This approach is crucial for the reliable application of DNNs to ill-posed inverse problems.

While the methodology and theory we develop herein apply broadly and are not limited solely to ill-posed inverse problems in imaging, to keep the paper self-contained we will focus on 2D computerized tomography (CT) applications.

The manuscript is organized as follows.  \Cref{sec:model_setting} introduces the notation and provides preliminaries on graph and regularization theories relevant to our model setting. This section focuses on a general approach to generating a graph from an image.  \Cref{sec:regularization} presents  the \graphLa method, covering its theoretical basis. In  \Cref{sec:DNNs}, we specialize in the integration of the \graphLa method with a DNN, namely \texttt{graphLa+Net}.  \Cref{sec:numerical_experiments,sec:expsetup} showcase numerical experiments within the CT framework, employing synthetic and real data. These experiments demonstrate that the \graphLa approach notably improves the quality of approximated solutions for each method $\Psi$, with \texttt{graphLa+Net} exhibiting exceptional performance, ensuring a robust and stable implementation of DNNs for solving inverse problems. In particular, thanks to the regularization property granted by the \graphLa approach, we were able to avoid any estimate of the noise distribution during the DNN training, focusing only on the best accuracy performance of the DNN on an unperturbed dataset.

The manuscript concludes in  \Cref{sec:conclusions}, where we summarize our findings and suggest future research directions. To maintain the readability and flow of the manuscript, the technical proofs involved in  \Cref{sec:regularization} are moved to \Cref{sec:appendix}.

\section{The model setting}\label{sec:model_setting}
Hereafter, we indicate in bold any finite dimensional (column)  vector, i.e. $\bx  \in X\simeq \R^n$, and for the $p$-component of $\bx$ we write $\bx(p)$, for $p=1,\ldots, n$. We use the standard notation $\|\cdot\|_r$  for an $\ell^r$-norm, $r\in [1,\infty]$.

As a baseline assumption, we say that the unperturbed observation $\by\in Y\simeq\R^m$, which is given in \cref{eq:problem_formulation} for $\delta=0$, is the realization of the action of $K$ on an element $\gt\in X$. That is,
\begin{hypothesis}\label{hypothesis:existence}
	There exists $\gt$ such that $K\gt = \by$.
\end{hypothesis}

 On the one hand, the goal is to obtain a reliable approximation of the ground truth signal $\gt$ when we have access only to an observation $\by^\delta$ corrupted by unknown noise. On the other hand, the unperturbed system \cref{eq:Model_Equation} for $\delta=0$ can be underdetermined, the observed data $\by^\delta$ can not belong to the range of $K$, or the (pseudo) inverse of $K$ can be very sensitive even to small levels of $\delta>0$. For those reasons, we say that the inverse problem of finding a solution for \cref{eq:Model_Equation} is ill-posed, and needs to be regularized. For an overview of ill-posed inverse problems and regularization techniques, we refer the reader to \cite{engl1996regularization,hansen1998rank,scherzer2009variational}.

Our analysis will heavily depend on graph structures, and so in  \Cref{ssec:graph_theory} we provide the groundwork of Graph Theory, and in  \Cref{ssec:images_and_graphs} we explore the relationship between images and graphs. This relationship will be exploited in the numerical experiments of  \Cref{sec:numerical_experiments}.

\subsection{Graph Theory}\label{ssec:graph_theory}
Fix a finite set $P$. We indicate by $C(P)$ the set of real-valued functions on $P$, that is,
\begin{equation*}
	C(P) \coloneqq \{\bx \mid \bx : P \to \R\}.
\end{equation*}
Assuming an implicit ordering of the elements in $P$, it is evident that if the cardinality of the set $P$ is $n$ then $C(P) \simeq \mathbb{R}^n \simeq X$. Hence, we use the same notation for finite-dimensional vectors, even when referring to elements $\bx \in C(P)$. From \Cref{sec:regularization} we will often make the identifications $C(P) = \R^n= X$. 

For a detailed introduction to the graph setting as presented here, see \cite{keller2021graphs}. 	
\begin{definition}\label{def:graph}
	We define a graph over a set $P$ as a pair $G=(w,\mu)$ given by:
	\begin{itemize}
		\item A nonnegative \emph{edge-weight} function $w\colon P\times P \to [0,\infty)$ that satisfies 
		\begin{enumerate}[i)]
			\item\label{hp_graph:simmetry} Symmetry: $w(p,q)=w(q,p)$ for every $p,q \in P$;
			\item\label{hp_graph:loops} No self-loops: $w(p,p)=0$ for every $p \in P$.
		\end{enumerate}
		\item A positive \emph{node measure} $\mu \colon P \to (0,\infty)$.
	\end{itemize}
\end{definition}
The definition of graph can be relaxed, for example, allowing for non-symmetric  edge-weight functions and self-loops.

Two nodes are connected if $w(p,q)>0$, and in that case we write $p\sim q$. A finite \emph{walk} is a finite sequence of nodes $\{p_i\}_{i=0}^k$ such that $w(p_{i},p_{i+1})>0$ for $i=0,\ldots,k-1$. A subset $Q\subseteq P$ is \emph{connected} if for every pair of nodes $p,q \in Q$ there is a finite walk such that $p_0=p$, $p_k=q$, and each $p_i$ belongs to $Q$. A connected subset  $Q\subseteq P$ is a \emph{connected component} of $P$ if it is maximal with respect to the ordering of inclusion.

\begin{definition}[graph Laplacian]\label{def:graph_Laplacian}
	The \emph{graph Laplacian}  $\Delta \colon  C(P) \to C(P)$ associated to the graph $G=(w,\mu)$ is defined by the action
	\begin{align*}
		\Delta \bx(p)&\coloneqq \frac{1}{\mu(p)}\sum_{q\in P} w(p,q)\left(\bx(p) - \bx(q)\right).\label{formal_laplacian2} 
	\end{align*}
\end{definition} 
Notice that $\Delta$ does not depend on the ordering (or labeling) of the elements in $P$. 


\subsubsection{Distance-based graph creation}\label{sssec:building_a_graph}
Given a finite set $P = \{p \mid p = 1,\ldots,n\}$ , fix a distance $\operatorname{dist}(\cdot,\cdot)$ on the set $P$ and a nonnegative function $h_{\textnormal{d}}\colon \R \to [0,+\infty)$ such that 	$h_{\textnormal{d}}(0)=0$.
Then it follows  that 
\begin{equation*}
    w_{\textnormal{d}}(p,q)\coloneqq h_{\textnormal{d}}(\operatorname{dist}(p,q))
\end{equation*}
is an edge-weight function on $P$, since $w_{\textnormal{d}}$ satisfies \cref{hp_graph:simmetry,hp_graph:loops} from \Cref{def:graph}. It is based on the geometric properties of $P$ induced by the distance $\operatorname{dist}(\cdot,\cdot)$. The magnitude of the connections between two nodes $p$ and $q$  is then regulated by $h_{\textnormal{d}}$.

Fix now an element $\bx\in C(P)$, another nonnegative function $h_{\textnormal{i}} \colon \R \to [0,+\infty)$ and finally define
\begin{equation}\label{def:induced_w}	
	w_{\bx}(p,q)\coloneqq  \underbrace{w_{\textnormal{d}}(p,q)}_{\text{geometry}}\cdot  \underbrace{h_{\textnormal{i}}(|\bx(p)-\bx(q)|)}_{\bx\text{ intensity}}.
\end{equation}
The edge-weight function \cref{def:induced_w} between two nodes depends on both the ``physical'' distance, thanks to $w_{\textnormal{d}}$, and  the ``variation of intensity'' of $\bx$, thanks to $h_{\textnormal{i}}(|\bx(p)-\bx(q)|)$. This dual dependence has a twofold effect: it can separate nodes that reside in different and unrelated regions of the space, and it weights the magnitude of the connections depending on the difference of intensities of $\bx$. See  \cite[Equation (4.8)]{gilboa2009nonlocal} for one of the first implementations of this idea. For an example of application, see \Cref{ssec:images_and_graphs}. 

Now choose any strictly positive function, which may depend on $\bx$,
\begin{equation}\label{def:induced_mu}
	 \mu_{\bx}(p) >0 \quad \forall \; p\in P.
\end{equation} 

Therefore, following \Cref{def:graph}, for any fixed $\bx\in C(P)$,  the pair \cref{def:induced_w}-\cref{def:induced_mu} is a graph $G$, on the set $P$, induced by $\bx$. We will write then $\Delta_{\bx}$ to indicate the associated graph Laplacian, as per \Cref{def:graph_Laplacian}.

\subsection{Images and graphs}\label{ssec:images_and_graphs}
We briefly describe here how images and graphs can be related, following the strategy and notation described in the previous  \Cref{sssec:building_a_graph}, and provide a practical example. 

First, any image is made by the union of several pixels $p \in P$ disposed on a grid of dimension $H\times W$, where $H,W \in \N$. It is then natural to identify them as ordered pairs $p=(i,j)\in \mathbb{Z}^2$ with $i=1,\ldots, H$ and $j=1,\ldots,W$. 

As a consequence, a reasonable choice to connect two pixels by their ``physical'' distance is to set 
\begin{equation*}
	\operatorname{dist}(p,q)\coloneqq \|p-q\|_1 \quad \mbox{and} \quad h_{\textnormal{d}}(t)\coloneqq \mathds{1}_{(0,R]}(t),
\end{equation*}
where  $\mathds{1}_{(0,R]}$ is the indicator function of the set $(0,R]$ and $R$ is a parameter of control that tells the maximum distance allowed for two pixels to be neighbors. If $0<\|p-q\|_1\leq R$, then $p$ and $q$ are connected with an edge of magnitude $1$, viz. $w_{\textnormal{d}}(p,q)=1$.

\begin{figure}[tb!]
	\begin{minipage}{0.25\textwidth}
		\centering
    \definecolor{myblue}{RGB}{0,85,164}
    \definecolor{mywhite}{RGB}{255,255,255}
    \definecolor{myorange}{RGB}{255,140,0}
    \definecolor{mydarkorange}{RGB}{204,102,0}  
    \definecolor{mylightblue}{RGB}{128,170,207}  
    \definecolor{mymixed}{RGB}{178,123,49}
    
\begin{tikzpicture}[scale=0.4]

    \foreach \x in {0,...,6} {
        \foreach \y in {0,...,6} {
            \fill[myorange] (\x,\y) rectangle ++(1,1);
        }
    }

    \foreach \x in {0,...,5} {
        \foreach \y in {0,...,2} {  
            \fill[myblue] (\x,\y) rectangle ++(1,1);
        }
    }
    \foreach \x in {6} {
        \foreach \y in {0,...,5} {
            \fill[myblue] (\x,\y) rectangle ++(1,1);
        }
    }

    \fill[myblue] (5,3) rectangle ++(1,1);
    \fill[myblue] (5,5) rectangle ++(1,1);
    \fill[myblue] (3,3) rectangle ++(1,1);
    \fill[myblue] (1,3) rectangle ++(1,1);

    \fill[mydarkorange] (1,5) rectangle ++(1,1);
    \fill[mydarkorange] (3,5) rectangle ++(1,1);
    \fill[mydarkorange] (3,6) rectangle ++(1,1);

    \fill[mylightblue] (0,1) rectangle ++(1,1);
    \fill[mylightblue] (1,0) rectangle ++(1,1);
    \fill[mylightblue] (0,0) rectangle ++(1,1);

    \foreach \x in {0,...,6} {
        \foreach \y in {0,...,6} {
            \draw (\x,\y) rectangle ++(1,1);
                    }
    }

\end{tikzpicture}
	\end{minipage}%
	\begin{minipage}{0.25\textwidth}
		\centering
    \definecolor{myblue}{RGB}{0,85,164}
    \definecolor{mywhite}{RGB}{255,255,255}
    \definecolor{myorange}{RGB}{255,140,0}
    \definecolor{mydarkorange}{RGB}{204,102,0}  
    \definecolor{mylightblue}{RGB}{128,170,207}  
    \definecolor{mymixed}{RGB}{178,123,49}
    
\begin{tikzpicture}[scale=0.4]

    \foreach \x in {0,...,6} {
        \foreach \y in {0,...,6} {
            \fill[myorange] (\x,\y) rectangle ++(1,1);
        }
    }

    \foreach \x in {0,...,5} {
        \foreach \y in {0,...,2} {  
            \fill[myblue] (\x,\y) rectangle ++(1,1);
        }
    }
    \foreach \x in {6} {
        \foreach \y in {0,...,5} {
            \fill[myblue] (\x,\y) rectangle ++(1,1);
        }
    }

    \fill[myblue] (5,3) rectangle ++(1,1);
    \fill[myblue] (5,5) rectangle ++(1,1);
    \fill[myblue] (3,3) rectangle ++(1,1);
    \fill[myblue] (1,3) rectangle ++(1,1);

    \fill[mydarkorange] (1,5) rectangle ++(1,1);
    \fill[mydarkorange] (3,5) rectangle ++(1,1);
    \fill[mydarkorange] (3,6) rectangle ++(1,1);

    \fill[mylightblue] (0,1) rectangle ++(1,1);
    \fill[mylightblue] (1,0) rectangle ++(1,1);
    \fill[mylightblue] (0,0) rectangle ++(1,1);

    \foreach \x in {0,...,6} {
        \foreach \y in {0,...,6} {
            \draw (\x,\y) rectangle ++(1,1);
            
            \fill[black] (\x+0.5,\y+0.5) circle (0.12);
        }
    }

\end{tikzpicture} 
	\end{minipage}%
	\begin{minipage}{0.25\textwidth}
		\centering
    \definecolor{myblue}{RGB}{0,85,164}
    \definecolor{mywhite}{RGB}{255,255,255}
    \definecolor{myorange}{RGB}{255,140,0}
    \definecolor{mydarkorange}{RGB}{204,102,0}  
    \definecolor{mylightblue}{RGB}{128,170,207}  
    \definecolor{mymixed}{RGB}{178,123,49}

\begin{tikzpicture}[scale=0.4]

    \foreach \x in {0,...,6} {
        \foreach \y in {0,...,6} {
            \fill[myorange] (\x,\y) rectangle ++(1,1);
        }
    }

    \foreach \x in {0,...,5} {
        \foreach \y in {0,...,2} {  
            \fill[myblue] (\x,\y) rectangle ++(1,1);
        }
    }
    \foreach \x in {6} {
        \foreach \y in {0,...,5} {
            \fill[myblue] (\x,\y) rectangle ++(1,1);
        }
    }

    \fill[myblue] (5,3) rectangle ++(1,1);
    \fill[myblue] (5,5) rectangle ++(1,1);
    \fill[myblue] (3,3) rectangle ++(1,1);
    \fill[myblue] (1,3) rectangle ++(1,1);

    \fill[mydarkorange] (1,5) rectangle ++(1,1);
    \fill[mydarkorange] (3,5) rectangle ++(1,1);
    \fill[mydarkorange] (3,6) rectangle ++(1,1);

    \fill[mylightblue] (0,1) rectangle ++(1,1);
    \fill[mylightblue] (1,0) rectangle ++(1,1);
    \fill[mylightblue] (0,0) rectangle ++(1,1);

    \foreach \x in {0,...,6} {
        \foreach \y in {0,...,6} {
            \draw (\x,\y) rectangle ++(1,1);
            
        }
    }

    \foreach \x in {0,...,6} {
        \foreach \y in {0,...,5} {
            \draw[line width=1mm, mywhite] (\x+0.5,\y+0.5) -- (\x+0.5,\y+1.5);
        }
    }

    \foreach \y in {0,...,6} {
        \foreach \x in {0,...,5} {
            \draw[line width=1mm, mywhite] (\x+0.5,\y+0.5) -- (\x+1.5,\y+0.5);
        }
    }

    \foreach \x in {0,...,6} {
        \foreach \y in {0,...,6} {
            
            \fill[black] (\x+0.5,\y+0.5) circle (0.12);
        }
    }
    
\end{tikzpicture} 
	\end{minipage}%
	\begin{minipage}{0.25\textwidth}
		\centering
    \definecolor{myblue}{RGB}{0,85,164}
    \definecolor{mywhite}{RGB}{255,255,255}
    \definecolor{myorange}{RGB}{255,140,0}
    \definecolor{mydarkorange}{RGB}{204,102,0}  
    \definecolor{mylightblue}{RGB}{128,170,207}  
    \definecolor{mymixed}{RGB}{178,123,49}

\begin{tikzpicture}[scale=0.4]

    \foreach \x in {0,...,6} {
        \foreach \y in {0,...,6} {
            \fill[myorange] (\x,\y) rectangle ++(1,1);
        }
    }

    \foreach \x in {0,...,5} {
        \foreach \y in {0,...,2} {  
            \fill[myblue] (\x,\y) rectangle ++(1,1);
        }
    }
    \foreach \x in {6} {
        \foreach \y in {0,...,5} {
            \fill[myblue] (\x,\y) rectangle ++(1,1);
        }
    }

    \fill[myblue] (5,3) rectangle ++(1,1);
    \fill[myblue] (5,5) rectangle ++(1,1);
    \fill[myblue] (3,3) rectangle ++(1,1);
    \fill[myblue] (1,3) rectangle ++(1,1);

    \fill[mydarkorange] (1,5) rectangle ++(1,1);
    \fill[mydarkorange] (3,5) rectangle ++(1,1);
    \fill[mydarkorange] (3,6) rectangle ++(1,1);

    \fill[mylightblue] (0,1) rectangle ++(1,1);
    \fill[mylightblue] (1,0) rectangle ++(1,1);
    \fill[mylightblue] (0,0) rectangle ++(1,1);

    \foreach \x in {0,...,6} {
        \foreach \y in {0,...,6} {
            \draw (\x,\y) rectangle ++(1,1);
            
        }
    }
    
    \foreach \x in {0,...,6} {
        \foreach \y in {0,...,5} {
            \draw[line width=0.05mm, mywhite] (\x+0.5,\y+0.5) -- (\x+0.5,\y+1.5);
        }
    }

    \foreach \x in {2,...,6} {
        \foreach \y in {0} {
            \draw[line width=1mm, mywhite] (\x+0.5,\y+0.5) -- (\x+0.5,\y+1.5);
        }
    }

    \foreach \x in {1} {
        \foreach \y in {1,...,2} {
            \draw[line width=1mm, mywhite] (\x+0.5,\y+0.5) -- (\x+0.5,\y+1.5);
        }
    }

    \foreach \x in {3} {
        \foreach \y in {1,...,2} {
            \draw[line width=1mm, mywhite] (\x+0.5,\y+0.5) -- (\x+0.5,\y+1.5);
        }
    }

    \foreach \x in {5} {
        \foreach \y in {1,...,2} {
            \draw[line width=1mm, mywhite] (\x+0.5,\y+0.5) -- (\x+0.5,\y+1.5);
        }
    }

    \foreach \x in {6} {
        \foreach \y in {1,...,4} {
            \draw[line width=1mm, mywhite] (\x+0.5,\y+0.5) -- (\x+0.5,\y+1.5);
        }
    }

    \draw[line width=1mm, mywhite] (4+0.5,1+0.5) -- (4+0.5,1+1.5);

    \foreach \x in {0} {
        \foreach \y in {3,...,5} {
            \draw[line width=1mm, mywhite] (\x+0.5,\y+0.5) -- (\x+0.5,\y+1.5);
        }
    }

    \foreach \x in {2} {
        \foreach \y in {3,...,5} {
            \draw[line width=1mm, mywhite] (\x+0.5,\y+0.5) -- (\x+0.5,\y+1.5);
        }
    }

    \foreach \x in {2} {
        \foreach \y in {3,...,5} {
            \draw[line width=1mm, mywhite] (\x+0.5,\y+0.5) -- (\x+0.5,\y+1.5);
        }
    }

    \foreach \x in {4} {
        \foreach \y in {4,...,5} {
            \draw[line width=1mm, mywhite] (\x+0.5,\y+0.5) -- (\x+0.5,\y+1.5);
        }
    }

    \draw[line width=1mm, mywhite] (3+0.5,5+0.5) -- (3+0.5,5+1.5); 

    \draw[line width=1mm, mywhite] (0+0.5,0+0.5) -- (0+0.5,0+1.5); 
    \draw[line width=1mm, mywhite] (2+0.5,1+0.5) -- (2+0.5,1+1.5); 
    \draw[line width=1mm, mywhite] (4+0.5,3+0.5) -- (4+0.5,3+1.5);

    \foreach \y in {0,...,6} {
        \foreach \x in {0,...,5} {
            \draw[line width=0.05mm, mywhite] (\x+0.5,\y+0.5) -- (\x+1.5,\y+0.5);
        }
    }

    \foreach \y in {0,...,1} {
        \foreach \x in {2,...,5} {
            \draw[line width=1mm, mywhite] (\x+0.5,\y+0.5) -- (\x+1.5,\y+0.5);
        }
    }

    \foreach \y in {2} {
        \foreach \x in {3,...,5} {
            \draw[line width=1mm, mywhite] (\x+0.5,\y+0.5) -- (\x+1.5,\y+0.5);
        }
    }

    \foreach \y in {4} {
        \foreach \x in {0,...,4} {
            \draw[line width=1mm, mywhite] (\x+0.5,\y+0.5) -- (\x+1.5,\y+0.5);
        }
    }

    \draw[line width=1mm, mywhite] (0+0.5,0+0.5) -- (0+1.5,0+0.5);
    \draw[line width=1mm, mywhite] (0+0.5,0+0.5) -- (0+1.5,0+0.5);
    \draw[line width=1mm, mywhite] (4+0.5,6+0.5) -- (4+1.5,6+0.5);
    \draw[line width=1mm, mywhite] (5+0.5,5+0.5) -- (5+1.5,5+0.5);
    \draw[line width=1mm, mywhite] (5+0.5,3+0.5) -- (5+1.5,3+0.5);
    \draw[line width=1mm, mywhite] (0+0.5,2+0.5) -- (0+1.5,2+0.5);
    \draw[line width=1mm, mywhite] (1+0.5,1+0.5) -- (1+1.5,1+0.5);
    \draw[line width=1mm, mywhite] (0+0.5,6+0.5) -- (1+1.5,6+0.5);
    \draw[line width=1mm, mywhite] (1+0.5,2+0.5) -- (2+1.5,2+0.5);
    \draw[line width=1mm, mywhite] (5+0.5,6+0.5) -- (5+1.5,6+0.5);
    \draw[line width=0.4mm, mywhite] (1+0.5,0+0.5) -- (1+1.5,0+0.5);
    \draw[line width=0.4mm, mywhite] (0+0.5,1+0.5) -- (0+1.5,1+0.5);
    \draw[line width=0.4mm, mywhite] (0+0.5,5+0.5) -- (3+1.5,5+0.5);
    \draw[line width=0.4mm, mywhite] (2+0.5,6+0.5) -- (3+1.5,6+0.5);

    \draw[line width=0.4mm, mywhite] (1+0.5,0+0.5) -- (1+1.5,0+0.5);
    \draw[line width=0.4mm, mywhite] (0+0.5,1+0.5) -- (0+1.5,1+0.5);
    \draw[line width=0.4mm, mywhite] (0+0.5,5+0.5) -- (3+1.5,5+0.5);
    \draw[line width=0.4mm, mywhite] (2+0.5,6+0.5) -- (3+1.5,6+0.5);
    \draw[line width=0.4mm, mywhite] (1+0.5,0+0.5) -- (1+0.5,0+1.5);
    \draw[line width=0.4mm, mywhite] (0+0.5,1+0.5) -- (0+0.5,1+1.5);
    \draw[line width=0.4mm, mywhite] (1+0.5,4+0.5) -- (1+0.5,5+1.5);
    \draw[line width=0.4mm, mywhite] (3+0.5,4+0.5) -- (3+0.5,4+1.5);

    \foreach \x in {0,...,6} {
        \foreach \y in {0,...,6} {
            
            \fill[black] (\x+0.5,\y+0.5) circle (0.12);
        }
    }
    
\end{tikzpicture} 
	\end{minipage}\caption{Simple outline of how to build a graph from an image $\bx$. To be read from left to right. Left: a $7\times7$ pixels image made by orange-like and blue-like square pixels. The color intensity of each pixel is given by the pixel-wise evaluation of a function $\bx$. Center-left: each pixel corresponds to one node, represented by a black circle. Since the pixels are disposed on a grid, each node can be associated to an ordered pair in $\mathbb{Z}^2$. Center-right: the geometric edge-weight function $w_{\textnormal{d}}$ in \cref{def:induced_w} is given by $\mathds{1}_{(0,1]}(\|p-q\|_1)$, that is, two nodes $p, q$ are connected if and only if $\|p-q\|_1 =1$, and in that case the magnitude of the connection is one. Right: the magnitude of an edge between two nodes is then weighted by $h_{\textnormal{i}}(\|\bx(p)-\bx(q)\|)\in (0,1]$, where $h_{\textnormal{i}}(t) = \exp\{-t^2/\sigma^2\}$ is the Gaussian function. The role of $h_{\textnormal{i}}$ is to measure the difference of intensity between two adjacent pixels, and it is close to zero when two pixels have very different color intensities. This is represented by the different thicknesses of the edges connecting two adjacent pixels, where a thick edge means a very similar color intensity and a thin edge means a very different color intensity.}\label{fig:example_image2graph}
\end{figure}
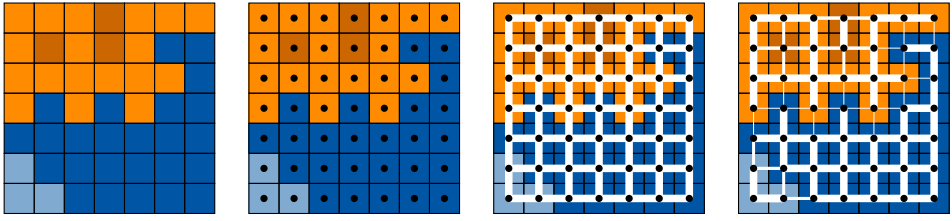

Second, a gray-scale image is given by the light intensities of its pixels. That is, a gray-scale image can be represented by a function $\bx \in C(P)$ such that $\bx(p) \in [0,1]$, where $0$ means black and $1$ means white. A common choice to weight the connection of two different pixels by their light intensities is to use the Gaussian function, that is
\begin{equation*}
	h_{\textnormal{i}}(t) \coloneqq \rm{e}^{-\frac{t^2}{\sigma^2}}, \qquad \sigma>0.
\end{equation*}
The reason lies in the relationship between the heat kernel and the discretization of the Laplacian on a manifold. See \cite{bronstein2017geometric} for discussions about the Laplacian on discretized manifolds, and  \cite{calatroni2017graph,gilboa2009nonlocal,lou2010image} for  applications of the Gaussian function to define the weights of the graph Laplacian. 

We arrive then at the definition of the edge-weight function in \eqref{def:induced_w}, applied to our case, that is
\begin{equation*}
	w_{\bx}(p,q) = \mathds{1}_{(0,R]}(\|p-q\|_1)\rm{e}^{-\frac{|\bx(p)-\bx(q)|^2}{\sigma^2}}.
\end{equation*}
See \Cref{ssec:hp_validity} for more details. The values of $\sigma$ only modify the shape of the Gaussian weight function: small values of $\sigma$ correspond to a tight distribution, while larger values result in wider curves. Conversely, \(R\) influences the sparsity of the graph Laplacian operator. Specifically, a larger \(R\) leads to a denser matrix, which implies that each matrix-vector product requires more computations. For these reasons, we choose \(R \leq 5\) to keep the computational cost low at each iteration, and $\sigma \leq 10^{-3}$ to avoid strong connections between uncorrelated pixels that are sufficiently close to each other. These choices are used, for instance, in \cite{aleotti2024fractional}.

For colored images, it is a little bit different. For example, in the RGB representation, the color of a pixel is given by the combination of the light intensities of three channels R(ed), G(green), and B(lue). Therefore, in principle a colored image should be regarded as a function $\bx : P \to \R^3$, where $\bx(p) = (\bx_{R} (p), \bx_{G}(p), \bx_{B}(p))$ is a vector-valued function whose elements represent the light intensities for each channel. However, it is common to assume that the ill-posed operator $K$ acts independently on each channel, and therefore the regularization is made on each channel separately. If for some reason this assumption can not be made, then we can simply modify the definition of \eqref{def:induced_w} in the following way:
\begin{equation*}
	w_{\bx}(p,q) \coloneqq  w_{\textnormal{d}}(p,q)\cdot h_{\textnormal{i}}(\|\bx(p)-\bx(q)\|),
\end{equation*}
where $\|\cdot\|$ is any appropriate norm in $\R^3$. See Figure \ref{fig:example_image2graph} for a simple example of building a graph from an image.

\section{The \graphLa regularization method}\label{sec:regularization}
In this section, we introduce the \graphLa method and we show that it is a regularization method. 

Let us begin by considering a family of operators $\{\Psi_\Theta : Y \rightarrow X\}$ which we generally refer to as \emph{reconstructors}. $\Psi_\Theta$ is  not necessarily linear, and $\Theta \in \R^k$ denotes  its parameters. This family of reconstructors is at the core of our \graphLa method. It has the very important role of giving a first approximation of $\gt$, since upon this approximation the reconstruction step of our method is built. 

We need to enforce some (weak) regularity on $\Psi_\Theta$. 

\begin{hypothesis}\label{hypothesis:Psi_convergence}
There exists an element $\bx_0 \in X$ and a parameter choice rule $\Theta = \Theta(\delta, \by^\delta)$  such that 
\begin{equation*}
		\|\Psi_{\Theta(\delta,\by^\delta)} (\by^\delta) - \bx_0\|_2 \to 0 \qquad \mbox{as } \delta\to 0.
	\end{equation*}
\end{hypothesis}
The above \Cref{hypothesis:Psi_convergence} will guarantee that the \graphLa method, which we will introduce properly in \cref{graphModel}, is a convergent regularization method.  The next hypothesis will guarantee stability for \cref{graphModel}.
\begin{hypothesis}\label{hyp:stability}
Let  $\by^\delta$ and $\Theta\coloneqq \Theta(\delta,\by^\delta)$ be fixed, and $\{\delta_k\}$ and $\{\by^{\delta_k}\}$ be sequences such that $\delta_k\to\delta$ and $\by^{\delta_k}\rightarrow \by^\delta$ for $k\rightarrow \infty$. Writing $\Theta_k\coloneqq \Theta(\delta_k, \by^{\delta_k})$, then
\begin{equation*}
    \Theta_k\to\Theta \quad \mbox{and} \quad  \|\Psi_{\Theta_k}(\by^{\delta_k})-\Psi_{\Theta}(\by^{\delta})\|_2 \to 0 \; \mbox{for } k\to \infty.
\end{equation*}
\end{hypothesis}
In the next two examples, we make clear that the above assumptions are pretty weak and they can be satisfied by several large classes of reconstructors. In particular, the pair $(\Psi_\Theta, \Theta)$ does not need to be a convergent regularization method. 
\begin{example}\label{ex:Lipchitz}
	A simple example of a family of reconstructors that satisfies \Cref{hypothesis:Psi_convergence} is the case when we identify them with a single,  (locally) Lipschitz continuous operator. That is, fix $\Theta \equiv \hat{\Theta}$ for every $\delta$ and $\by^\delta$, and choose an operator $\Psi_{\hat{\Theta}}$ such that $\|\Psi_{\hat{\Theta}}(\by_1) -\Psi_{\hat{\Theta}}(\by_2)\|_2 \leq L \|\by_1 - \by_2 \|_2$. Thanks to \cref{eq:problem_formulation} and the Lipschitz condition, \Cref{hypothesis:Psi_convergence} is then  verified with $\bx_0 \coloneqq \Psi_{\hat{\Theta}}(\by)$. In the same way, \Cref{hyp:stability} is verified again by the Lipschitz property of $\Psi_{\hat{\Theta}}$ and because $\hat{\Theta}$ is fixed and independent of $k$.
\end{example}
The situation of the preceding example will occur later, when $\Psi_{\hat{\Theta}}$ is implemented as a trained DNN, as discussed in \Cref{sec:DNNs} and \Cref{ssec:DNN_training}.  Let us emphasize that DNNs are typically non-regularizing methods. See for example \cite{tan2024provably,eliasof2023drip,gottschling2020troublesome}, and references therein.
\begin{example}\label{ex:regularization_methods}
	A less trivial family of reconstructors that satisfy \Cref{hypothesis:Psi_convergence} is the one given by any typical regularization operators, as per \cite[Definition 3.1]{engl1996regularization}, for example. Let $\Theta \in (0,\infty)$ and $\Psi_\Theta$ be continuous, and  $K^\dagger$ be the usual Moore-Penrose pseudo-inverse of $K$. Fix $\bx_0 \coloneqq K^\dagger \by$ and observe that $K\bx_0 = \by$, since $\by\in \operatorname{range}(K)$ by \Cref{hypothesis:existence}.  By definition of regularization operator, there exists a parameter choice rule $\Theta = \Theta(\delta, \by^\delta)$ such that
 \begin{equation*}
     \sup\{\|\Psi_{\Theta(\delta,\by^\delta)}(\by^\delta) - \bx_0  \|_2 \mid \by^\delta\in Y,\; \|\by^\delta - K\bx_0\|_2\leq \delta \}\to 0 \quad \mbox{as } \delta \to 0,
 \end{equation*}
and \Cref{hypothesis:Psi_convergence} is then verified. About \Cref{hyp:stability},  this is a bit more involved and depends on the regularization method itself and the parameter choice rule. We will show a specific example in \Cref{example:Tikhonov}. 
\end{example}

The class of convergent regularization methods which fits \Cref{ex:regularization_methods} is vast, and not necessarily restricted to the Hilbert setting. We mention Tikhonov-Phillips methods (both ordinary and iterative variants) \cite{hochstenbach2011fractional,klann2008regularization,bianchi2015iterated,bianchi2017generalized}, popular $\ell^p$-$\ell^q$ methods with general regularization term \cite{lanza2015generalized,chung2019flexible},  and framelets \cite{cai2008framelet,huang2013nonstationary,cai2016regularization}.

As detailed in \Cref{sssec:building_a_graph}, we can define a graph Laplacian induced by 
$$
\Psi_\Theta^\delta\coloneqq \Psi_\Theta(\by^\delta)\in X\simeq C(P),
$$
and we will denote it by $\graphLdelta$. Consider then the following Tikhonov-type variational method for \cref{eq:Model_Equation},
\begin{equation}\label{graphModel}
	\sol \in  \underset{\bx \in X}{\argmin} \left\{\frac{1}{2}\|K\bx - \by^\delta\|_2^2 + \alpha \|\graphLdelta\bx\|_1\right\}.
\end{equation}
We call \cref{graphModel} the \graphLa method. 

Let us focus our attention on the regularization term $\mathcal{R}(\bx, \by^\delta)\coloneqq \|\graphLdelta\bx\|_1$. As a first remark, notice that, unlike typical regularization methods,  $\mathcal{R}$ depends not only on $\bx$ but also on the data $\by^\delta$. This complicates any attempt at studying the convergence of \cref{graphModel} for $\delta \to 0$.

As a second remark, indicating with $w_{\Psi_\Theta^\delta}$ the edge-weight function \cref{def:induced_w}  with $\bx$ replaced by $\Psi_\Theta^\delta$, we have that minimizing $\mathcal{R}$   means to force $\sol$ to be constant on the regions where $w_{\Psi_\Theta^\delta}(p, q)$  is ``large''. This is a direct consequence of \Cref{def:graph_Laplacian}, 
\begin{equation*}\label{eq:Lap_inequality}
	|\graphLdelta \bx(p)| = \frac{1}{\mu_{\Psi_\Theta^\delta}(p)}\left|\sum_{q\in P} w_{\Psi_\Theta^\delta}(p, q) (\bx(p) - \bx(q))\right|.
\end{equation*}

For example, setting $w_{\Psi_\Theta^\delta}$ as in \cref{eq:edge-weight-function:applications}, then $w_{\Psi_\Theta^\delta}(p,q)$ attains its maximum value of $1$ if and only if $p\sim q$ and $\Psi_\Theta^\delta(p)=\Psi_\Theta^\delta(q)$. Therefore, $\| \graphLdelta \bx\|_1\approx 0$ if $\bx$ is constant on the regions where $\Psi_\Theta^\delta$ is constant.

Put simply, we can say that $\mathcal{R}$ helps to distinguish the regions of uniformity from the regions of interfaces.  In intuition, once $\Psi_\Theta^\delta$ reconstructs the region of interfaces in $\gt$, the final solution $\sol$ will be a good approximation of the ground truth $\gt$, even though $\Psi_\Theta^\delta$ deviates from $\gt$ in other aspects.

As a last comment, the $\ell^1$-norm in $\mathcal{R}$ is introduced to enforce sparsity and preserve discontinuities on the approximated solution $\sol$. This is mainly in view of the imaging applications of Section \ref{sec:numerical_experiments}. All the theory developed here works with the $\ell^1$-norm replaced by any $\ell^r$-norm for $r>1$.


\subsection{Convergence and stability results}\label{ssec:theoretical_restults}
In this subsection, we study the regularizing properties of \graphLa. Since the proofs involved are quite technical, to help the readability of this subsection we move them to \Cref{sec:appendix}.

First, we need a definition of solution for \cref{eq:Model_Equation}.  Under \Cref{hypothesis:Psi_convergence}, let $\bx_0$ and $\Theta=\Theta(\delta,\by^\delta)$ be such that
\begin{equation}\label{eq:limit_solution}
\bx_0\coloneqq \lim_{\delta \to 0} \Psi_{\Theta(\delta,\by^\delta)} (\by^\delta).
\end{equation}
\begin{definition}\label{def:graph-minimizing_solution}
We call $\bx_{\textnormal{sol}}$ a \emph{graph-minimizing solution} with respect to $\bx_0$, defined in \cref{eq:limit_solution},	if $K\bx_{\textnormal{sol}} = \by$ and 
	\begin{equation}
		\|\graphL \bx_{\textnormal{sol}}\|_1 = \min\{  \|\graphL\bx\|_1 \mid \bx \in X, \; K\bx = \by \}.
	\end{equation}
\end{definition}

\begin{remark}
A graph-minimizing solution $\bx_{\textnormal{sol}}$ is a  pre-image of $\by$ which minimizes the functional $\mathcal{R}(\bx) = \|\graphL \bx\|_1$. If the operator $K$ is injective, then there exists one and only one graph-minimizing solution and $\bx_{\textnormal{sol}} = \gt$, thanks to \Cref{hypothesis:existence}. However, in general, $\gt$ is not necessarily a graph-minimizing solution when $K$ is not injective. Loosely speaking, a graph-minimizing solution is an approximation of the (inaccessible) ground-truth with respect to some a-posteriori information encoded in~$\bx_0$, as per \Cref{eq:limit_solution}. \Cref{def:graph-minimizing_solution} is a special instance of the definition of $\mathcal{R}$-minimizing solutions; see for example~\cite[Definition 3.24]{scherzer2009variational}.
\end{remark}

Let us indicate with $w_{\Psi_\Theta^\delta}$  and $w_0$ the edge-weight functions in \cref{def:induced_w} induced by $\Psi_\Theta^\delta$ and $\bx_0$, respectively. To make our analysis work, we need three last hypotheses which are related to properties of $w_{\Psi_\Theta^\delta}$ and $\graphLdelta$. As we will discuss in \Cref{ssec:hp_validity}, those hypotheses are easy to be satisfied. 

\begin{hypothesis}\label{hypothesis:null_stability}
	For every $p,q \in P$, $w_{\Psi_\Theta^\delta}>0$ if and only if $w_0(p,q)>0$.
\end{hypothesis}
The following lemma is an immediate consequence of the above hypothesis.
\begin{lemma}\label{lem:null_stability}
	Under \Cref{hypothesis:null_stability}, there is an invariant subspace $V\subseteq C(P)$ such that $\ker(\graphLdelta)=\ker(\graphL)=V$  for every $\Psi_\Theta^\delta$.
\end{lemma}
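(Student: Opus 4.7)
My plan is to reduce the claim to the well-known characterization of the kernel of a graph Laplacian as the set of functions that are constant on each connected component. The key observation is that the connected components of the graph depend only on the \emph{support} of the edge-weight function $w$, not on its numerical values, nor on the choice of the positive node measure $\mu$. Since \Cref{hypothesis:null_stability} asserts exactly that $w_{\Psi^\delta_\Theta}$ and $w_0$ have identical support, the two graphs have the same connected components, hence the same kernel.

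More concretely, the first step is to prove the auxiliary statement: for any graph $G=(w,\mu)$ on $P$ with associated Laplacian $\Delta$, one has $\bx\in\ker(\Delta)$ if and only if $\bx$ is constant on every connected component of $G$. One direction is immediate from \Cref{def:graph_Laplacian}: if $\bx$ is constant on each connected component, then for every $p\in P$ either $w(p,q)=0$ or $p\sim q$ (so $\bx(p)=\bx(q)$), giving $\Delta\bx(p)=0$. For the reverse direction, I would use the symmetry of $w$ and positivity of $\mu$ to compute
\begin{equation*}
\langle \Delta\bx,\bx\rangle_\mu \;=\; \sum_{p\in P}\Big(\sum_{q\in P} w(p,q)\bigl(\bx(p)-\bx(q)\bigr)\Big)\bx(p) \;=\; \tfrac{1}{2}\sum_{p,q\in P} w(p,q)\bigl(\bx(p)-\bx(q)\bigr)^{2},
\end{equation*}
where the weighted inner product is $\langle \bu,\bv\rangle_\mu=\sum_p \mu(p)\bu(p)\bv(p)$. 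If $\Delta\bx=0$ the left-hand side vanishes, forcing $\bx(p)=\bx(q)$ whenever $w(p,q)>0$; propagating this equality along finite walks shows $\bx$ is constant on each connected component.

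In the second step I would simply invoke \Cref{hypothesis:null_stability}: the relations $p\sim q$ induced by $w_{\Psi^\delta_\Theta}$ and by $w_0$ coincide, so both graphs share the same partition of $P$ into connected components $\{P_1,\dots,P_r\}$. Defining
\begin{equation*}
V \;\coloneqq\; \bigl\{\bx\in C(P)\;:\;\bx\text{ is constant on each } P_j,\; j=1,\dots,r\bigr\},
\end{equation*}
the auxiliary statement yields $\ker(\graphLdelta)=V=\ker(\graphL)$ for every choice of $\Psi^\delta_\Theta$ compatible with \Cref{hypothesis:null_stability}, which is exactly the assertion of the lemma. Note that $V$ is completely determined by the support pattern of $w_0$ and therefore does not depend on $\delta$, on $\Theta$, or on the particular node measures $\mu_{\Psi^\delta_\Theta},\mu_0$ used to form the Laplacians.

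I do not anticipate a real obstacle here; the only subtlety is to present the kernel characterization cleanly using the $\mu$-weighted inner product (so that $\Delta$ is self-adjoint and the quadratic form above makes sense), and to be explicit that the argument is insensitive to the values of $w$ and $\mu$ beyond their signs, which is precisely what \Cref{hypothesis:null_stability} encodes.
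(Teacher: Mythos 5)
Your proposal is correct and follows essentially the same route as the paper: both reduce the claim to the characterization of $\ker(\Delta)$ as the functions constant on connected components, and then observe that \Cref{hypothesis:null_stability} makes the supports of $w_{\Psi^\delta_\Theta}$ and $w_0$ coincide, hence the connected components and the kernels coincide. The only difference is that the paper cites \cite[Lemmas 0.29 and 0.31]{keller2021graphs} for the kernel characterization, whereas you prove it directly via the $\mu$-weighted quadratic form; your computation is correct.
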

\begin{proof}
See \Cref{sec:appendix}. 
\end{proof}
The invariant subspace $V$  replaces $\ker(D)$ in the typical null-space condition $\ker(K)\cap \ker(D) = \{\mathbf{0}\}$, invoked for functionals of the form  \cref{model_eq2}.

\begin{hypothesis}\label{hypothesis:null_intersection}
	$\ker(K)\cap V=\{\boldsymbol{0}\}$.
\end{hypothesis}

\vspace{0.1cm}
\noindent The last assumption we need is on the function $h_{\textnormal{i}}$ in \cref{def:induced_w} and the node measure $\mu_\bx$  in \cref{def:induced_mu}.

\begin{hypothesis}\label{hypothesis:differentiability}
The function	$h_{\textnormal{i}}$ and  the map $\bx \mapsto \mu_{\bx}(p)$, for every fixed $p\in P$, are  Lipschitz.
\end{hypothesis}
\vspace{0.1cm}

\noindent The above hypotheses are not difficult to check in practice. In  \Cref{ssec:hp_validity} we  provide specific choices of \cref{def:induced_w,def:induced_mu}, for our numerical experiments, and we  show that \Cref{hypothesis:null_stability,hypothesis:null_intersection,hypothesis:differentiability} are satisfied.

Throughout the remainder of this section, we will assume the validity of all the hypotheses introduced thus far: \Cref{hypothesis:existence,hypothesis:differentiability,hypothesis:null_intersection,hypothesis:Psi_convergence,hypothesis:null_stability,hyp:stability}. 

The first three results are about the existence and uniqueness of the graph-minimizing solution and the well-posedness of \cref{graphModel}.

\begin{proposition}\label{prop:existence_min}
There exists a graph-minimizing solution $\bx_{\textnormal{sol}}$.
\end{proposition}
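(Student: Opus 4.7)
The plan is to apply the direct method of calculus of variations: show that the feasible set is nonempty and closed, that the objective $\mathcal{R}(\bx) \coloneqq \|\graphL \bx\|_1$ is continuous and convex, and that every minimizing sequence is bounded, so that compactness in the finite-dimensional space $X \simeq \R^n$ produces a minimizer. Nonemptiness of $S \coloneqq \{\bx \in X \mid K\bx = \by\}$ follows from Hypothesis \ref{hypothesis:existence}, since $\gt \in S$, and $S$ is closed as an affine subspace. Continuity and convexity of $\mathcal{R}$ are immediate, being the $\ell^1$-norm composed with a linear map. Hence $\inf_S \mathcal{R}$ is finite.

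The only non-routine step is the boundedness of a minimizing sequence $\{\bx_n\} \subset S$, because $\mathcal{R}$ is not coercive on $X$: by Lemma \ref{lem:null_stability}, $\ker(\graphL) = V$ is a nontrivial invariant subspace. I would split orthogonally $\bx_n = \bu_n + \bv_n$ with $\bu_n \in V^\perp$ and $\bv_n \in V$. Since $\graphL|_{V^\perp}$ is injective and all norms on a finite-dimensional space are equivalent, there is a constant $c > 0$ such that $\|\bu\|_2 \leq c\|\graphL \bu\|_1$ for all $\bu \in V^\perp$. Boundedness of $\mathcal{R}(\bx_n) = \|\graphL \bu_n\|_1$ then yields boundedness of $\{\bu_n\}$.

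To control the kernel component $\{\bv_n\}$, notice that $K\bv_n = \by - K\bu_n$ is bounded in $Y$. Hypothesis \ref{hypothesis:null_intersection} ensures $K|_V$ is injective, so (again by equivalence of norms in finite dimensions) there exists $c' > 0$ with $\|\bv\|_2 \leq c'\|K\bv\|_2$ for every $\bv \in V$. This bounds $\{\bv_n\}$, hence $\{\bx_n\}$. Extracting a convergent subsequence $\bx_{n_k} \to \bx_{\textnormal{sol}}$ and passing to the limit, using closedness of $S$ together with continuity of $K$ and $\mathcal{R}$, gives $K\bx_{\textnormal{sol}} = \by$ and $\mathcal{R}(\bx_{\textnormal{sol}}) = \inf_S \mathcal{R}$, which is the claim.

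The main obstacle is precisely that $\mathcal{R}$ has a nontrivial kernel, so coercivity fails on $X$; but Hypotheses \ref{hypothesis:null_stability} and \ref{hypothesis:null_intersection} were designed to enable the $V \oplus V^\perp$ splitting above, after which the result reduces to standard finite-dimensional compactness. Uniqueness is not asserted and need not be addressed here, since $\mathcal{R}$ is convex but not strictly convex on $S$ in general.
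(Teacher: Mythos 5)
Your proof is correct and follows essentially the same route as the paper's: both rest on the $V\oplus V^\perp$ decomposition from \Cref{lem:null_stability}, the norm-equivalence bound for $\graphL$ restricted to $V^\perp$, and the injectivity of $K$ on $V$ guaranteed by \Cref{hypothesis:null_intersection}. The only difference is organizational --- the paper obtains boundedness of the minimizing sequence by contradiction through the coercivity of the full Tikhonov functional (\Cref{lem:coercivity} together with \Cref{rem:unchanged}), whereas you inline the same two inequalities to bound the $V^\perp$- and $V$-components directly.
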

\begin{proof}
See \Cref{sec:appendix}. 
\end{proof}

\begin{proposition}\label{prop:well-posedness}
	For every fixed $\delta, \alpha >0$ and $\by^\delta \in Y$,  there exists a  solution $\sol$ for the variational problem \cref{graphModel}.
\end{proposition}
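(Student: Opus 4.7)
The plan is to invoke the direct method of the calculus of variations: produce a minimizing sequence, show it is bounded (so a subsequence converges, by finite dimensionality of $X$), and pass to the limit using continuity. Since $\delta$, $\alpha$ and $\by^\delta$ are all held fixed here, the operator $\graphLdelta$ is a single linear map on $\R^n$, so the objective
\[
\GammaFunctional(\bx) = \tfrac{1}{2}\|K\bx - \by^\delta\|_2^2 + \alpha\|\graphLdelta\bx\|_1
\]
is continuous, convex, and nonnegative on $X$. In particular $m\coloneqq \inf_\bx \GammaFunctional(\bx)\ge 0$ is finite, and we can pick $\{\bx_k\}\subset X$ with $\GammaFunctional(\bx_k)\to m$.

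The main step is to prove that $\{\bx_k\}$ must be bounded. I will argue by contradiction: suppose along some subsequence $\|\bx_k\|_2\to \infty$. Normalize by setting $\hat\bx_k\coloneqq \bx_k/\|\bx_k\|_2$; since the unit sphere of $\R^n$ is compact, a further subsequence satisfies $\hat\bx_k\to \hat\bx$ with $\|\hat\bx\|_2=1$. From $\GammaFunctional(\bx_k)\le m+1$ eventually, one obtains
\[
\frac{1}{2\|\bx_k\|_2^{2}}\|K\bx_k-\by^\delta\|_2^2\le \frac{m+1}{\|\bx_k\|_2^2},\qquad \frac{\alpha}{\|\bx_k\|_2}\|\graphLdelta \bx_k\|_1 \le \frac{m+1}{\|\bx_k\|_2}.
\]
Letting $k\to\infty$ and using continuity of $K$, $\graphLdelta$ and of the norms yields $\|K\hat\bx\|_2=0$ and $\|\graphLdelta\hat\bx\|_1=0$, i.e. $\hat\bx\in \ker(K)\cap\ker(\graphLdelta)$. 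By \Cref{lem:null_stability} we have $\ker(\graphLdelta)=V$, and by \Cref{hypothesis:null_intersection} $\ker(K)\cap V=\{\mathbf 0\}$, so $\hat\bx=\mathbf 0$, contradicting $\|\hat\bx\|_2=1$.

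Once boundedness is established, a final compactness argument finishes the proof: extract a convergent subsequence $\bx_{k_j}\to \sol$ in $X$, and since $\GammaFunctional$ is continuous on $\R^n$, $\GammaFunctional(\sol)=\lim_j\GammaFunctional(\bx_{k_j})=m$, showing $\sol$ attains the minimum.

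The only genuinely non-routine ingredient is the coercivity step, but it reduces immediately to the null-space condition \Cref{hypothesis:null_intersection} once \Cref{lem:null_stability} is in hand; everything else is standard calculus-of-variations reasoning in finite dimensions, with no need for lower semicontinuity beyond ordinary continuity nor for any weak-topology machinery.
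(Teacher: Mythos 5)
Your proof is correct. It rests on exactly the same two ingredients as the paper's argument --- \Cref{lem:null_stability} (so that $\ker(\graphLdelta)=V$) and the null-space condition of \Cref{hypothesis:null_intersection} --- but it implements the compactness step differently. The paper isolates coercivity of $\Gamma$ as a separate statement (\Cref{lem:coercivity}), proved by splitting $\bx$ via the projections onto $V$ and $V^\perp$, bounding the two pieces from below by the constants $\gamma_1,\gamma_2$ in a two-case analysis, and then simply cites a textbook existence result for coercive lower semicontinuous functionals in finite dimensions. You instead run the direct method explicitly and obtain boundedness of the minimizing sequence by the normalization-and-contradiction argument: rescale to the unit sphere, use the homogeneity of the two terms (dividing the fidelity term by $\|\bx_k\|_2^2$ and the regularization term by $\|\bx_k\|_2$) to conclude that a limit direction $\hat\bx$ lies in $\ker(K)\cap\ker(\graphLdelta)=\ker(K)\cap V=\{\mathbf 0\}$, contradicting $\|\hat\bx\|_2=1$. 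Your version is more self-contained and arguably slicker for this one proposition; the paper's version produces a quantitative coercivity statement (\Cref{lem:coercivity}) that is reused later (in the proof of \Cref{prop:existence_min} and as the template for \Cref{lem:uniform_coercivity}, where a \emph{uniform} lower bound over a family of Laplacians is needed and a pure soft-compactness argument would have to be redone with more care). No gap in your argument.
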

\begin{proof}
    See \Cref{sec:appendix}.
\end{proof}
\begin{corollary}\label{cor:uniqueness}
	If $K$ is injective, then $\bx_{\textnormal{sol}}$ and $\sol$ are unique.
\end{corollary}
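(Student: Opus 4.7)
The plan is to establish both uniqueness statements via standard convexity arguments, leveraging the injectivity of $K$ to promote strict convexity.

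For the uniqueness of $\bx_{\textnormal{sol}}$, I would argue directly from the definition. By \Cref{hypothesis:existence}, the ground truth $\gt$ satisfies $K\gt = \by$, so the feasible set $\{\bx \in X \mid K\bx = \by\}$ appearing in \Cref{def:graph-minimizing_solution} is nonempty. Injectivity of $K$ immediately forces this set to be the singleton $\{\gt\}$, and hence any graph-minimizing solution must coincide with $\gt$. Combined with the existence granted by \Cref{prop:existence_min}, uniqueness follows at once, and in fact one concludes $\bx_{\textnormal{sol}} = \gt$.

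For the uniqueness of $\sol$, I would use strict convexity of the objective functional
\[
\GammaFunctional(\bx) = \tfrac{1}{2}\|K\bx - \by^\delta\|_2^2 + \alpha \|\graphLdelta \bx\|_1.
\]
The regularization term $\bx \mapsto \alpha\|\graphLdelta \bx\|_1$ is convex, being the composition of a norm with a linear operator. The fidelity term $\bx \mapsto \tfrac{1}{2}\|K\bx - \by^\delta\|_2^2$ has Hessian $K^\top K$, which is positive definite precisely because $K$ is injective; consequently this term is strictly convex. The sum of a strictly convex and a convex functional is strictly convex, so $\GammaFunctional$ admits at most one minimizer. Existence of a minimizer is guaranteed by \Cref{prop:well-posedness}, and combining the two yields uniqueness of $\sol$.

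Neither part presents any real obstacle: the whole argument relies only on elementary convex-analytic facts, with the only subtle point being the observation that $K$ injective implies $K^\top K \succ 0$ on the finite-dimensional space $X\simeq\R^n$, which makes the quadratic fidelity term strictly convex on all of $X$ (as opposed to only on $(\ker K)^\perp$). No properties of $\graphLdelta$ beyond convexity of the induced seminorm are needed in this corollary.
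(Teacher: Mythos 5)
Your proposal is correct and follows essentially the same route as the paper: uniqueness of $\bx_{\textnormal{sol}}$ from the feasible set collapsing to a singleton under injectivity, and uniqueness of $\sol$ from strict (indeed strong, since $K^\top K \succ 0$ in finite dimensions) convexity of the fidelity term plus convexity of the regularizer. The paper simply phrases the second part in terms of strong convexity and cites a standard reference for the conclusion.
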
	
\begin{proof}
   See \Cref{sec:appendix}. 
\end{proof}
\begin{remark}
Without the injectivity property, uniqueness can fail. The main culprit is the $\ell^1$-norm in the regularization term. However, it is possible to achieve uniqueness in a less stringent manner, namely, for every $\by^\delta$ outside a set of negligible measures. The approach should be in line with \cite{candes2013simple,ali2019generalized}, but adapted to this specific context. That being said, relaxing the assumptions to regain the uniqueness of the solutions falls beyond the scope of the current work.
\end{remark}
The next theorems provide convergence and stability results.

\begin{theorem}\label{thm:convergence}
	Assume that $\alpha \colon (0,+\infty) \to (0,+\infty)$ satisfies 
	\begin{subequations}
		\begin{equation}\label{eq:thm:convergence:a}
			\lim_{\delta \to 0} \alpha(\delta) =0, 
		\end{equation}	
		\begin{equation}\label{eq:thm:convergence:b}
			\lim_{\delta \to 0} \frac{\delta^2}{\alpha(\delta)}=0.
		\end{equation}
	\end{subequations}
	
	Fix a sequence $\{\delta_k\}$ such that 
	\begin{equation*}
		\lim_{k\to\infty} \delta_k = 0, \qquad \|\by^{\delta_k}-\by\|_2 \leq \delta_k, 
	\end{equation*}
and set $\alpha_k \coloneqq \alpha(\delta_k)$. Let $\Theta=\Theta(\delta,\by^\delta)$ be the parameter choice rule as in \Cref{hypothesis:Psi_convergence}, and set $\Theta_k \coloneqq \Theta(\delta_k, \by^{\delta_k})$. Then every sequence $\{\bx_k\}$ of elements that minimize the functional  \cref{graphModel}, with $\delta_k$ and $\Theta_k$, has a convergent subsequence. The limit $\bx_{\textnormal{sol}}$ of the convergent subsequence $\{\bx_{k'}\}$is a graph-minimizing solution with respect to $\bx_0$, and 
	\begin{equation*}
		\lim_{k'}\|\Delta_{\Psi^{\delta_{k'}}_{\Theta_{k'}}}\bx_{k'}\|_1 = \|\graphL\bx_{\textnormal{sol}}\|_1.
	\end{equation*}
	If $\bx_{\textnormal{sol}}$ is unique, then $\bx_k \to \bx_{\textnormal{sol}}$. 
\end{theorem}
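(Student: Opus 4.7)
The plan is to follow the classical Tikhonov-type convergence template, carefully tracking the joint dependence of the data term and the regularizer on $\delta$ and $\by^\delta$. By \Cref{prop:existence_min}, fix a graph-minimizing solution $\bx_{\textnormal{sol}}$ with respect to $\bx_0$. Since $\bx_k$ minimizes $\GammaFunctionalk$ and $K\bx_{\textnormal{sol}}=\by$ with $\|\by^{\delta_k}-\by\|_2\le\delta_k$, the minimality inequality $\GammaFunctionalk(\bx_k)\le \GammaFunctionalk(\bx_{\textnormal{sol}})$ yields
\begin{equation*}
\frac{1}{2}\|K\bx_k-\by^{\delta_k}\|_2^2+\alpha_k\|\graphLdeltak\bx_k\|_1 \le \frac{\delta_k^2}{2}+\alpha_k\|\graphLdeltak\bx_{\textnormal{sol}}\|_1.
\end{equation*}
The critical auxiliary fact is that $\bu\mapsto\Delta_{\bu}$ is continuous in operator norm: by \Cref{hypothesis:differentiability}, the functions $h_{\textnormal{i}}$ and $\bu\mapsto\mu_{\bu}(p)$ are Lipschitz, and \Cref{hypothesis:null_stability} keeps the sparsity pattern of the matrix fixed, so each entry of $\Delta_{\bu}$ is a Lipschitz function of $\bu$ in a neighborhood of $\bx_0$. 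Combined with \Cref{hypothesis:Psi_convergence}, this gives $\graphLdeltak\to\graphL$ in operator norm; in particular $\|\graphLdeltak\bx_{\textnormal{sol}}\|_1\to\|\graphL\bx_{\textnormal{sol}}\|_1$, and the right-hand side is bounded. Dividing the displayed inequality by $\alpha_k$ and invoking \eqref{eq:thm:convergence:b} yields $\limsup_k\|\graphLdeltak\bx_k\|_1\le\|\graphL\bx_{\textnormal{sol}}\|_1$; keeping the first term instead and invoking \eqref{eq:thm:convergence:a} yields $\|K\bx_k-\by\|_2\to 0$.

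The main obstacle is the uniform boundedness of $\{\bx_k\}$, made delicate by the (possibly) nontrivial kernel $V=\ker(\graphL)$. Decompose $\bx_k=\bv_k+\bw_k$ orthogonally with $\bv_k\in V$ and $\bw_k\in V^\perp$; by \Cref{lem:null_stability} we also have $V=\ker(\graphLdeltak)$, so $\graphLdeltak\bx_k=\graphLdeltak\bw_k$. The operators $\graphLdeltak|_{V^\perp}$ are injective and converge in operator norm to the injective $\graphL|_{V^\perp}$, so their smallest singular values are eventually bounded below by a positive constant; together with the $\ell^1$-bound on $\graphLdeltak\bw_k$, this forces $\{\bw_k\}$ to be bounded. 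Since $\|K\bx_k\|_2$ is bounded from the first step, so is $\|K\bv_k\|_2=\|K\bx_k-K\bw_k\|_2$; then \Cref{hypothesis:null_intersection} asserts injectivity of $K|_V$, which in finite dimensions delivers a uniform lower bound $\|K\bv\|_2\ge c\|\bv\|_2$ on $V$, so $\{\bv_k\}$ is bounded as well.

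Working in finite dimensions, extract a convergent subsequence $\bx_{k'}\to\bar\bx$; continuity of $K$ yields $K\bar\bx=\by$. Applying the operator-norm convergence $\Delta_{\Psi^{\delta_{k'}}_{\Theta_{k'}}}\to\graphL$ together with $\bx_{k'}\to\bar\bx$, we obtain
\begin{equation*}
\|\graphL\bar\bx\|_1=\lim_{k'}\|\Delta_{\Psi^{\delta_{k'}}_{\Theta_{k'}}}\bx_{k'}\|_1\le\|\graphL\bx_{\textnormal{sol}}\|_1.
\end{equation*}
By \Cref{def:graph-minimizing_solution} the inequality must be an equality, so $\bar\bx$ is itself a graph-minimizing solution with respect to $\bx_0$, and the regularizer values converge along the subsequence as claimed. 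Finally, if $\bx_{\textnormal{sol}}$ is unique, every convergent subsequence of $\{\bx_k\}$ must share the limit $\bx_{\textnormal{sol}}$; a standard subsequence-of-subsequence argument, combined with the boundedness of $\{\bx_k\}$, then upgrades this to $\bx_k\to\bx_{\textnormal{sol}}$.
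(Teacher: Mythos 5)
Your proposal is correct and follows essentially the same route as the paper: the minimality inequality against $\bx_{\textnormal{sol}}$, the operator-norm convergence $\graphLdeltak\to\graphL$ (the paper's \Cref{lem:stability}, proved exactly via the Lipschitz properties of $h_{\textnormal{i}}$ and $\mu_{\bx}$ and the fixed sparsity pattern), the $\limsup$ bound on the regularizer, boundedness via the $V\oplus V^\perp$ splitting (which is precisely the content of the paper's \Cref{lem:coercivity,lem:uniform_coercivity}, just unpacked inline), and the standard subsequence argument for uniqueness. No gaps.
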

\begin{proof}
   See \Cref{sec:appendix}.  
\end{proof}

\begin{theorem}\label{thm:stability}
Let now $\by^\delta$ be fixed and $\{\delta_k\}$ and $\{\by^{\delta_k}\}$ be sequences such that $\delta_k\to\delta$ and $\by^{\delta_k}\to\by^\delta$ for $k\to\infty$.	 Then every sequence $\{\bx_k\}$ with
	\begin{equation*}
		\bx_k \in  \underset{\bx \in X}{\argmin} \left\{\frac{1}{2}\|K\bx - \by^{\delta_k}\|_2^2 + \alpha \|\Delta_{\Psi^{\delta_k}_{\Theta_k}}\bx\|_1\right\},
	\end{equation*}
	has a converging subsequence $\{\bx_{k^{'}}\}$ such that
    \begin{equation*}
        \lim_{k^{'}}\bx_{k^{'}}\in\underset{\bx \in X}{\argmin}\left\{\frac{1}{2}\|K\bx-\by^{\delta}\|_2^2+\alpha\|\Delta_{\Psi^{\delta}_\Theta}\bx\|_1\right\}.
    \end{equation*}
\end{theorem}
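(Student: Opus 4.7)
\textbf{Proof proposal for \Cref{thm:stability}.}

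The plan is to mimic the classical variational-regularization stability argument (see, e.g., \cite[Thm.~3.23]{scherzer2009variational}), with the extra twist that here the regularization term itself depends on $k$ through the operator $\Delta_{\Psi^{\delta_k}_{\Theta_k}}$. Hence the very first task is to establish a continuity statement for the map $\bx_k \mapsto \Delta_{\Psi^{\delta_k}_{\Theta_k}}$. Using \Cref{hyp:stability} we have $\Psi^{\delta_k}_{\Theta_k}\to \Psi^\delta_\Theta$ in $X$, and combining this with \Cref{hypothesis:differentiability}, i.e. the Lipschitz properties of $h_{\textnormal{i}}$ and of $\bw \mapsto \mu_\bw(p)$, a direct calculation on the entries of the matrix representing the graph Laplacian (recall $X \simeq \R^n$) shows that $\Delta_{\Psi^{\delta_k}_{\Theta_k}} \to \Delta_{\Psi^\delta_\Theta}$ in operator norm.

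Next, I would prove boundedness of the sequence $\{\bx_k\}$. Let $\bx^\ast$ be any minimizer of the limit functional (which exists by the same argument used for \Cref{prop:well-posedness}). Since $\bx_k$ minimizes the $k$-th functional,
\begin{equation*}
    \tfrac{1}{2}\|K\bx_k - \by^{\delta_k}\|_2^2 + \alpha\|\Delta_{\Psi^{\delta_k}_{\Theta_k}}\bx_k\|_1 \;\leq\; \tfrac{1}{2}\|K\bx^\ast - \by^{\delta_k}\|_2^2 + \alpha\|\Delta_{\Psi^{\delta_k}_{\Theta_k}}\bx^\ast\|_1.
\end{equation*}
The right-hand side converges to $\mathcal{T}(\bx^\ast)$ by the operator convergence of Step~1 and the convergence $\by^{\delta_k}\to\by^\delta$. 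Therefore $\|K\bx_k\|_2$ and $\|\Delta_{\Psi^{\delta_k}_{\Theta_k}}\bx_k\|_1$ are both bounded, and using once more the operator convergence this transfers to a bound on $\|\Delta_{\Psi^\delta_\Theta}\bx_k\|_1$. Now I would invoke \Cref{lem:null_stability} together with \Cref{hypothesis:null_intersection} to deduce $\ker(K)\cap \ker(\Delta_{\Psi^\delta_\Theta})=\{\mathbf{0}\}$, which in finite dimensions yields the coercivity estimate
\begin{equation*}
    \|\bx\|_2 \;\leq\; C\bigl(\|K\bx\|_2 + \|\Delta_{\Psi^\delta_\Theta}\bx\|_1\bigr) \qquad \forall\,\bx\in X,
\end{equation*}
so $\{\bx_k\}$ is bounded. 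By Bolzano--Weierstrass a subsequence $\bx_{k'}\to\bar\bx$.

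Finally, I would pass to the limit in the minimality inequality $\mathcal{T}_{k'}(\bx_{k'}) \leq \mathcal{T}_{k'}(\bx)$ valid for every $\bx\in X$. On the right, $\mathcal{T}_{k'}(\bx)\to\mathcal{T}(\bx)$ by the pointwise convergence of the operators and of $\by^{\delta_{k'}}$. On the left, continuity of $K$ and of the norms gives $\|K\bx_{k'}-\by^{\delta_{k'}}\|_2 \to \|K\bar\bx-\by^\delta\|_2$; for the regularizer I would use the splitting
\begin{equation*}
    \Delta_{\Psi^{\delta_{k'}}_{\Theta_{k'}}}\bx_{k'} - \Delta_{\Psi^\delta_\Theta}\bar\bx \;=\; \bigl(\Delta_{\Psi^{\delta_{k'}}_{\Theta_{k'}}} - \Delta_{\Psi^\delta_\Theta}\bigr)\bx_{k'} + \Delta_{\Psi^\delta_\Theta}(\bx_{k'}-\bar\bx),
\end{equation*}
where the first summand vanishes because $\{\bx_{k'}\}$ is bounded and the operator difference converges to zero in norm, while the second vanishes by continuity. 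Hence $\mathcal{T}_{k'}(\bx_{k'}) \to \mathcal{T}(\bar\bx)$ (actually, $\liminf$ suffices), and consequently $\mathcal{T}(\bar\bx)\leq \mathcal{T}(\bx)$ for every $\bx$, proving that $\bar\bx$ lies in the desired argmin.

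The main obstacle is the first step: because the regularizer is data-dependent, the pointwise and operator-norm convergence $\Delta_{\Psi^{\delta_k}_{\Theta_k}} \to \Delta_{\Psi^\delta_\Theta}$ must be set up carefully via the Lipschitz assumptions in \Cref{hypothesis:differentiability}. Once this is in hand, the rest follows the standard pattern of lower semicontinuity plus coercivity in finite dimension.
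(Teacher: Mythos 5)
Your proposal follows essentially the same route as the paper: your Step 1 is the paper's \Cref{lemma:Lapl_bound} (operator-norm convergence $\Delta_{\Psi^{\delta_k}_{\Theta_k}}\to\Delta_{\Psi^\delta_\Theta}$ via \Cref{hyp:stability} and \Cref{hypothesis:differentiability}), your Step 2 is the boundedness argument the paper obtains from \Cref{lemma:Gamma_inequalities} together with an adaptation of \Cref{lem:uniform_coercivity}, and your final limit passage is the adaptation of \cite[Theorem 3.23]{scherzer2009variational} that the paper only sketches. One step needs repair as written: you claim that boundedness of $\|\Delta_{\Psi^{\delta_k}_{\Theta_k}}\bx_k\|_1$ ``transfers'' to a bound on $\|\Delta_{\Psi^\delta_\Theta}\bx_k\|_1$ via the operator convergence, but the available estimate is $\|(\Delta_{\Psi^{\delta_k}_{\Theta_k}}-\Delta_{\Psi^\delta_\Theta})\bx_k\|_1\leq c\,\|\bx_k\|_1\,\|\Psi^{\delta_k}_{\Theta_k}-\Psi^\delta_\Theta\|_2$, whose right-hand side involves precisely the quantity $\|\bx_k\|_1$ you are trying to bound, so the step is circular as stated. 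The fix is standard: insert this estimate into your coercivity inequality $\|\bx\|_2\leq C(\|K\bx\|_2+\|\Delta_{\Psi^\delta_\Theta}\bx\|_1)$ and absorb the term $Cc\,\|\Psi^{\delta_k}_{\Theta_k}-\Psi^\delta_\Theta\|_2\,\|\bx_k\|_1$ into the left-hand side for $k$ large (the prefactor tends to zero), or equivalently establish a coercivity bound that is uniform in $k$, which is exactly what the paper's \Cref{lem:uniform_coercivity} provides. With that one-line correction your argument is complete and matches the paper's.
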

\begin{proof}
	See \Cref{sec:appendix}.  
\end{proof}

\section{$\texttt{graphLa+Net}$}\label{sec:DNNs}
Among the possible choices for reconstructors $\Psi_\Theta$, we propose to use a Deep Neural Network (DNN). In this case, the set of parameters $\Theta$ contains matrices and vectors, which are the building blocks of DNNs. For a mathematical introduction to neural networks, we refer to \cite{bishop2024deep}. Informally speaking, a DNN is a long chain of compositions of affine operators and nonlinear activation functions. The set of parameters $\Theta$ is then trained by minimizing a loss function over a large number of data, see \Cref{ssec:DNN_training}. When considering $\Psi_\Theta$ as a DNN, we specify it by calling the method $\texttt{graphLa+Net}$. A first overview of $\texttt{graphLa+Net}$ was proposed in \cite{bianchi2023graph}. 

Note that, in principle, it is possible to make the network parameters $\Theta$ dependent on the noise level $\delta$ by training it multiple times for different values of $\delta$. However, this is rarely done in practice due to the significant amount of time and energy consumption it would require. This limitation has always been a crucial challenge in employing DNNs for regularizing ill-posed problems, as it necessitates the estimate of an optimal noise level $\delta$ which is suitable for diverse applications. Additionally, opting for $\delta=0$ is generally not a good choice due to the typical high sensitivity of DNNs to the noise, as observed in \cite{morotti2021green,antun2020instabilities,evangelista2023ambiguity}. 

Nonetheless, the regularizing property of \graphLa effectively addresses this issue. In the subsequent discussion, we will consider DNNs as reconstructors with a fixed $\hat{\Theta}$, where $\hat{\Theta}$ has been trained over a \textit{noiseless} dataset.  As will be shown in  \Cref{ssec:COULE,ssec:Mayo}, the resulting $\texttt{graphLa+Net}$ is not only regularizing and stable but also significantly superior in performance, despite the inherent instability of the original DNN.

\subsection{The architecture}
In our DNN model, we utilize a modified version of the U-net, as detailed in \cite{ronneberger2015u}. The structure of this modified U-net is illustrated in \Cref{fig:resunet_architecture}. U-net is a widely recognized multi-scale Convolutional Neural Network architecture, known for its effectiveness in processing images with global artifacts. This fully convolutional network features a symmetrical encoder-decoder structure, employing strided convolutions to expand its receptive field. The encoder layers' strides create distinct levels of resolution within the network. Each level comprises a set number of blocks, where a block consists of a convolutional layer with a fixed number of channels, followed by batch normalization and a ReLU activation function. The number of convolutional channels is doubled at each successive level, starting from a baseline number in the first layer. Specifically, our network is designed with four levels and a baseline of 64 convolutional channels.

As mentioned, the decoder mirrors the encoder but uses upsampling convolutional layers in place of strided convolutions. Furthermore, to preserve high-frequency details, skip connections link the final layer of each encoder level to the corresponding first layer of the decoder.

The network we consider in this paper is called Residual U-net (ResU-net) and it was proposed in \cite{evangelista2023rising}. This network is a variation of the original U-net architecture, modified through two key changes. First, we have reconfigured the skip connections to work as additions rather than concatenations, a strategy aimed at reducing the total number of parameters. Second, we introduce a residual connection that links the input and output layers directly, which implies that the network learns the residual mapping between the input and the expected output. The importance of the residual connection has been observed in  \cite{han2016deep}, where the authors proved that the residual manifold containing the artifacts is easier to learn than the true image manifold. 

After the training procedure, the resulting DNN is Lipschitz continuous. Therefore, \Cref{hypothesis:Psi_convergence} is verified, as noted in \Cref{ex:Lipchitz}. In general, the Lipschitz constant $L$ depends on various factors and can be large, potentially affecting the uniform convergence of $\graphLdelta$ in \Cref{lem:stability}. However, in the numerical experiments reported in \Cref{sec:numerical_experiments}, no issues were observed, and the overall stability of the \texttt{graphLa+Net} method was very good. If necessary, it is possible to uniformly bound $L$ from above by 1 through different training strategies; see, for example, \cite{miyato2018spectral}.

\begin{figure}[ht]
   \centering
	\includegraphics[width=\textwidth]{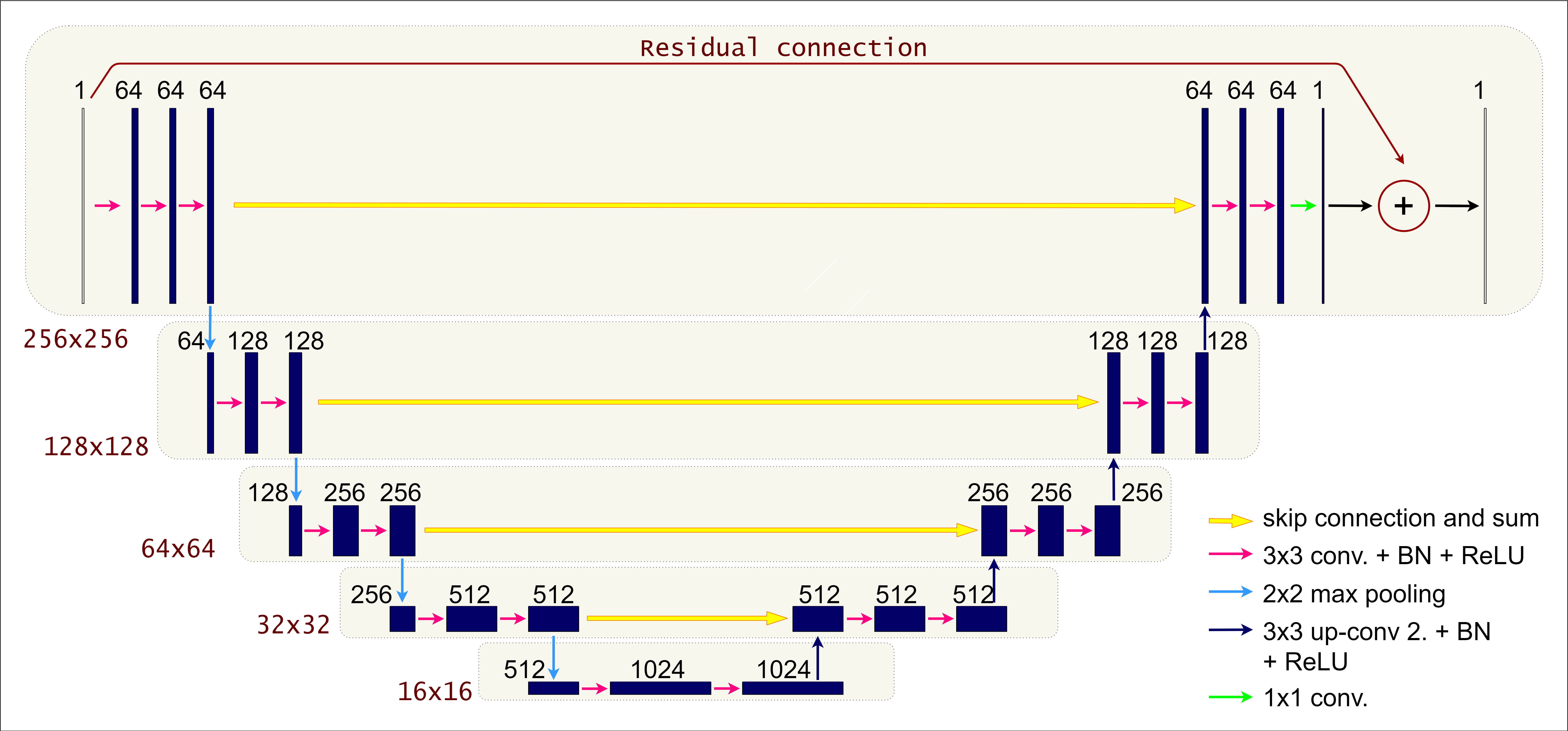}
    \caption{A diagram of the ResU-net architecture used in the experiments.}
    \label{fig:resunet_architecture}
\end{figure}

\section{Experimental setup}\label{sec:expsetup}
In this section, we describe the setting and the setup of the \graphLa method in our numerical experiments.

\subsection {Sparse view Computed Tomography}\label{ssec:CT}
X-rays Computed Tomography (CT) is a widespread imaging system particularly useful for detecting injuries, tumors, internal bleeding, bone fractures, and various medical conditions. It is essentially constituted by a source rotating around an object and emitting X-ray beams from a fixed number of angles along its arc trajectory. Passing through the interior of the object, a quantity of radiation proportional to the density of the tissues is absorbed, and the resulting rays are measured by a detector. The collection of all measurements at different projection angles is called the sinogram.

In this case, the matrix $K \in \R^{m \times n}$ represents the discrete Radon operator, $\bx \in X=\R^n$ is the  2D object discretized in $n$ pixels (voxels) and vectorized, and $\by^{\delta} \in Y=\R^m$ is the vectorized sinogram. Vectorization is simply performed by considering the one-dimensional lexicographic ordering of pixels $p=(i,j)$, which in practice is done by following the column-major or row-major order. For example, if the original 2D image is given by $H\times W$ pixels, then $n = H\cdot W$.

Here, the dimension $m$  depends on the number of projection angles $n_a$.
In medical applications, sparse view CT denotes the case in which only a small number of angles $n_a$ is considered, to reduce the radiation dose absorbed by the patient. In real-world applications, what is called full acquisition in CT has two or three projections per degree; acquisitions that reduce the projections by a factor of 3 or 4 are considered sparse acquisitions. In this particular setting, the matrix $K$ is under-determined (i.e. $m<n$). To faithfully simulate the real medical scenario in our experiments we model $K$ as the fan beam Radon transform, modeling the spread of X-rays with a fan trajectory.

\subsection{The datasets and the test problems}
We evaluate the \graphLa algorithms using two distinct image datasets. The first is the COULE dataset, which comprises synthetic images with a resolution of 256x256 pixels. These images feature ellipses and lines of varying gray intensities against a dark background. This dataset is publicly available on Kaggle \cite{coule-dataset}. The second dataset is a subsampled version of the AAPM Low Dose CT Grand Challenge dataset, provided by the Mayo Clinic \cite{moen2021low}. It contains real chest CT image acquisitions, each also at a resolution of 256x256 pixels.

To simulate the sinogram $\by^\delta$ we consider $n_a$ different angles evenly distributed within the closed interval $[0, 179]$, where $n_a = 60$ in the experiments with COULE dataset and $n_a = 180$ in the experiments with Mayo dataset. The sinograms are generated using the IRtools toolbox \cite{gazzola2019ir}.

\begin{figure}\label{fig:COULE_start}
    \centering
    \subfigure[True image $\gt$]{\includegraphics[width= 4cm, height = 4cm]{COULE_true}}    
    \subfigure[Sinogram]{\includegraphics[width= 4 cm, height = 4 cm]{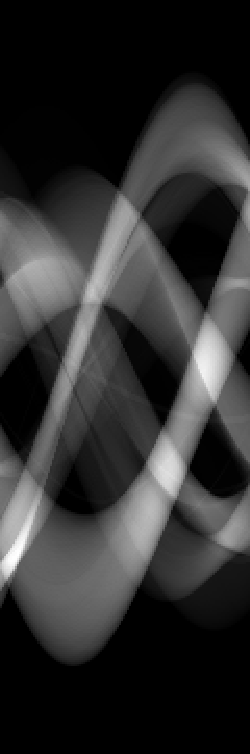}}
    \caption{(a): Example of a $\gt$ image from COULE dataset. (b): The resulting sinogram}
\end{figure}

\begin{figure}\label{fig:Mayo_start}
    \centering
    \subfigure[True image $\gt$]{\includegraphics[width=4cm, height=4cm]{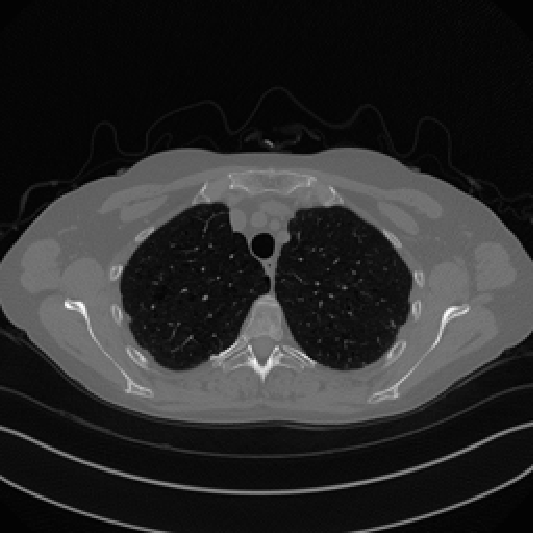}}    
    \subfigure[Sinogram]{\includegraphics[width=4cm, height=4cm]{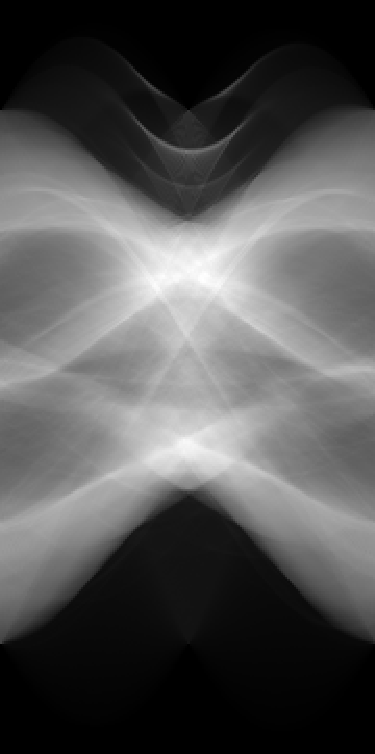}}
    \caption{(a): Example of a $\gt$  image from Mayo dataset. (b): The resulting sinogram}
\end{figure}

In Figures \ref{fig:COULE_start} and \ref{fig:Mayo_start}, we present an example of the true image and its resulting sinogram of size $n_d \times n_a$, where $n_d = \lfloor \sqrt{2n} \rfloor$ is the number of pixels of the detector. To simulate real-world conditions, we add white Gaussian noise $\boldsymbol{\xi}$ to the sinogram at an intensity level of $\delta$, indicating that the norm of the noise is $\delta$ times the norm of the sinogram. In particular, we compute $\by^\delta$ as:
\begin{equation*}
    \by^\delta = \by + \delta \| \by \|\frac{\boldsymbol{\xi}}{\| \boldsymbol{\xi}\|}.
\end{equation*}

\subsection{Neural Network training}\label{ssec:DNN_training}
For the DNN introduced in \Cref{sec:DNNs}, we randomly selected 400 pairs of images from  COULE and 3,305 pairs of images from Mayo as training sets, all of the form $(\gt, \by^\delta)$.
Following the discussion in \Cref{sec:DNNs},  we train the DNN on the training sets in a supervised manner without extra noise, specifically by setting $\delta=0$.  The process involves finding $\hat{\Theta}$ that minimizes the Mean Squared Error (MSE) between the predicted reconstruction $\Psi_{\Theta}(\by)$ and the ground-truth solution $\gt$, viz. $\operatorname{MSE}=\frac{1}{n} \| \Psi_\Theta(\by)- \gt \|_2^2$. Once the training process is completed, we set $\Theta \equiv \hat{\Theta}$. We did not apply any regularization technique in the training phase. 

Since the  ResU-net architecture is fully convolutional, the input required by the network is an image. Hence, the input sinogram $\by$ has to be pre-processed through a fast algorithm mapping the sinogram to a coarse reconstructed image, such as FBP  \cite{morotti2021green, jin2017deep} or a few iterations of a regularizing algorithm \cite{morotti2023increasing, evangelista2023rising}. For those experiments, we chose the FBP.

\begin{figure}\label{fig:loss_function}
    \centering  \subfigure{\includegraphics[width=0.6\linewidth]{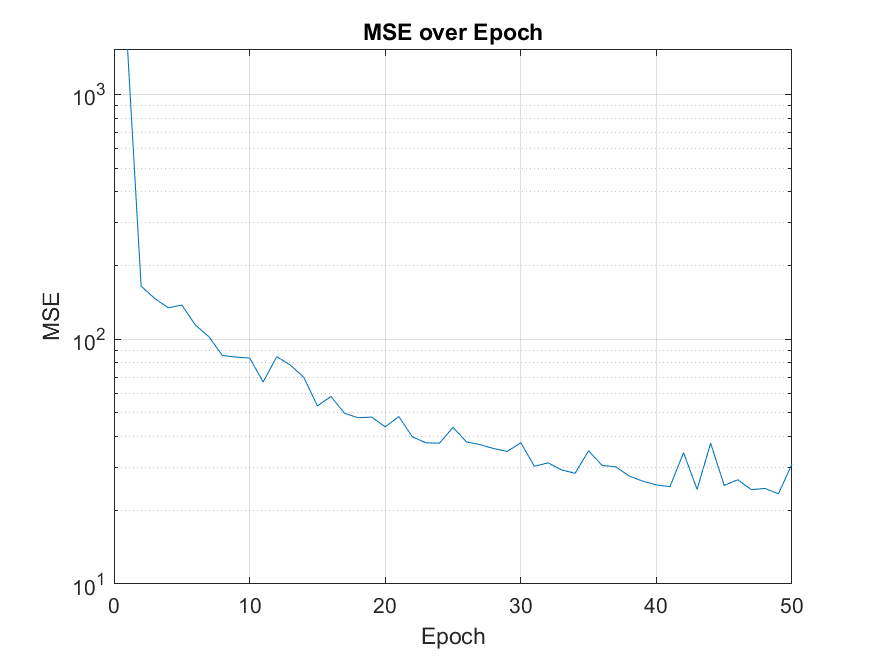}} 
    \caption{Training loss plot for COULE experiment.}
\end{figure}
The networks have been trained on an NVIDIA RTX A4000 GPU card with 16Gb of VRAM, for a total of 50 epochs and a batch size of 10, arresting it after the loss function stopped decreasing as shown in Figure \ref{fig:loss_function}. We used Adam optimizer with a learning rate of 0.001, $\beta_1 = 0.9$, and $\beta_2 = 0.999$, in all the experiments.

\subsection{Construction of the graph Laplacian}\label{ssec:hp_validity}

In the numerical experiments, to compute the graph Laplacian we consider the following edge-weight function $\omega_\bx$  and  node function $\mu_\bx$:
\begin{equation}\label{eq:edge-weight-function:applications}
	w_{\bx}(p,q) = \mathds{1}_{(0,R]}(\|p-q\|_{\infty}){\rm{e}}^{-\frac{|\bx(p)-\bx(q)|^2}{\sigma^2}}, \quad \mu_\bx(p) = \mu_\bx \coloneqq   \sqrt{\sum_{p,q\in P} w_{\bx}^2(p,q)}.
\end{equation}
The theory developed in \Cref{sec:regularization} relies on several hypotheses. In \Cref{prop:validity1}, we show that with those choices, $\omega_\bx$ grants straightforwardly the validity of  \Cref{hypothesis:null_stability,hypothesis:null_intersection}, and in \Cref{prop:validity2} we show that both $\omega_\bx$  and $\mu_\bx$ satisfy \Cref{hypothesis:differentiability}. As a corollary, we have that the constant $c$ that appears in \Cref{lem:stability} is independent from the dimension $n$ of the vector space $C(P)\simeq X = \R^n$.

\begin{proposition}\label{prop:validity1}
The edge-weight function $\omega_\bx$ ensures the validity of \Cref{hypothesis:null_stability}, and of  \Cref{hypothesis:null_intersection} when $K$ represents the discrete Radon operators as introduced in \Cref{ssec:CT}.
\end{proposition}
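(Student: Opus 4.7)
The plan is to handle the two hypotheses in turn, exploiting the factorized form of $w_\bx$ in \cref{eq:edge-weight-function:applications}. For \Cref{hypothesis:null_stability} the argument is essentially immediate: the Gaussian factor $\exp(-|\bx(p)-\bx(q)|^2/\sigma^2)$ is strictly positive for every $\bx \in C(P)$, so the sign of $w_\bx(p,q)$ is dictated entirely by the geometric indicator $\mathds{1}_{(0,R]}(\|p-q\|_\infty)$, which does not depend on $\bx$. Hence $w_{\Psi_\Theta^\delta}(p,q)>0 \iff 0<\|p-q\|_\infty \le R \iff w_0(p,q)>0$, which is exactly what is required. A useful byproduct, which I would record as a remark before attacking the second hypothesis, is that by \Cref{lem:null_stability} the invariant kernel space $V$ is determined solely by the connectivity pattern induced by $\mathds{1}_{(0,R]}(\|\cdot\|_\infty)$.

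For \Cref{hypothesis:null_intersection} the first step is to identify $V$ concretely. The standard energy identity for graph Laplacians gives
\begin{equation*}
\langle \Delta \bx, \bx\rangle_\mu = \tfrac{1}{2}\sum_{p,q\in P} w(p,q)\bigl(\bx(p)-\bx(q)\bigr)^2,
\end{equation*}
so $\bx \in \ker(\Delta)$ iff $\bx$ is constant on every connected component of the graph. Since the pixels are arranged on a rectangular grid and the choice $R\ge 1$ connects all $\|\cdot\|_\infty$-neighbours, the whole set $P$ is a single connected component; thus $V = \operatorname{span}\{\mathbf{1}\}$, where $\mathbf{1}$ denotes the all-ones vector.

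It then suffices to show $K\mathbf{1}\neq \mathbf{0}$. This is the only place where the structure of $K$ enters. For the discrete fan-beam Radon transform described in \Cref{ssec:CT}, each row of $K$ corresponds to an X-ray and contains nonnegative entries that encode the (discretized) intersection lengths of that ray with the pixels of the image. The sum of the entries in a row therefore equals the total length of the intersection of the corresponding ray with the image domain, which is strictly positive for any ray that actually traverses the reconstruction grid — and by construction of the acquisition geometry, at least one such ray exists (otherwise $K$ would produce no measurement at all). Consequently $(K\mathbf{1})_i>0$ for at least one index $i$, so $K\mathbf{1}\neq \mathbf{0}$ and $\ker(K)\cap V = \{\mathbf{0}\}$.

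I do not foresee a serious obstacle here: both parts reduce to elementary observations once $V$ has been characterised. The only point that requires a little care is the characterization of $\ker(\Delta)$ via the quadratic form identity and the observation that the grid is connected under the chosen geometric rule, but both are standard and quick.
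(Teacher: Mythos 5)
Your proof is correct and follows essentially the same route as the paper: positivity of the Gaussian factor reduces \Cref{hypothesis:null_stability} to the $\bx$-independence of the geometric indicator, connectivity of the pixel grid gives $V=\operatorname{span}\{\mathbf{1}\}$, and the nonnegativity of the Radon rows yields $\ker(K)\cap V=\{\mathbf{0}\}$. The only differences are cosmetic: you derive the kernel characterization from the quadratic-form identity where the paper cites \cite[Lemmas 0.29 and 0.31]{keller2021graphs}, and you use the slightly weaker (but sufficient, and arguably more careful for fan-beam geometries) claim $K\mathbf{1}\neq\mathbf{0}$ in place of the paper's $K\mathbf{1}>\mathbf{0}$.
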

\begin{proof}
Using the same notation as in \cref{def:induced_w,eq:edge-weight-function:applications}, since $h_{\textnormal{i}}(t) = \textrm{e}^{-\frac{t^2}{\sigma^2}} > 0$ for every $t$, then $\omega_\bx(p,q)>0$ if and only if $w_{\textnormal{d}}(p,q)>0$. From the choice \cref{eq:edge-weight-function:applications}, $w_{\textnormal{d}}(p,q) = \mathds{1}_{(0,R]}(\|p-q\|_1)$ and it is independent of $\bx$. This proves \Cref{hypothesis:null_stability}.

It is immediate to check that the whole set of pixels $P$ is connected with respect to $w_{\textnormal{d}}$, and therefore is connected with respect to $w_\bx$, for any $\bx$. As a consequence, indicating with $V$ the invariant subspace in \Cref{lem:null_stability}, it holds that $V = \ker(\Delta_\bx) = \{ t \boldsymbol{1} \mid t \in \R\}$ for any $\bx$, where $\boldsymbol{1}\in C(P)$ is the  constant function $\boldsymbol{1}(p) = 1$ for every $p\in P$. For a proof, see \cite[Lemmas 0.29 and 0.31]{keller2021graphs}.

Since $K$ is a discrete Radon operator, then $K\boldsymbol{1} > \boldsymbol{0}$ for any possible configuration of $K$, such as the number of angles $n_a$ or the number of pixels $n_d$ of the detector. This is due to the fact that each row of $K$ represents line integrals. See, for example, \cite[Chapter 1.4]{scherzer2009variational}. In particular, this means $\ker(K) \cap V = \{\boldsymbol{0}\}$, which is \Cref{hypothesis:null_intersection}.
\end{proof}

\begin{proposition}\label{prop:validity2}
	$\omega_\bx$  and $\mu_\bx$ satisfy \Cref{hypothesis:differentiability}.
\end{proposition}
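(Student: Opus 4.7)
The plan is to prove the two Lipschitz statements essentially independently, since the claim about $h_{\textnormal{i}}$ follows from elementary calculus, while the claim about $\bx\mapsto \mu_\bx(p)$ requires an estimate that exploits both the Lipschitz property of $h_{\textnormal{i}}$ and the compact support of the geometric kernel $\mathds{1}_{(0,R]}(\|p-q\|_\infty)$.

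\textbf{Step 1 (Lipschitz continuity of $h_{\textnormal{i}}$).} I would simply differentiate $h_{\textnormal{i}}(t)=\mathrm{e}^{-t^2/\sigma^2}$ on $\R$, obtaining $h_{\textnormal{i}}'(t)=-\tfrac{2t}{\sigma^2}\mathrm{e}^{-t^2/\sigma^2}$, and show that $|h_{\textnormal{i}}'|$ attains its maximum at $t=\pm\sigma/\sqrt{2}$ with value $L_h=\sqrt{2}\,\sigma^{-1}\mathrm{e}^{-1/2}$. Hence $h_{\textnormal{i}}$ is globally Lipschitz on $\R$ with constant $L_h$.

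\textbf{Step 2 (Lipschitz continuity of $\bx\mapsto\mu_\bx$).} Since $\mu_\bx$ does not depend on $p$, it suffices to treat the scalar map $\bx\mapsto\mu_\bx$. The key observation is that $\mu_\bx$ is the Euclidean norm of the weight array $w_\bx$ viewed as an element of $\R^{|P|\times|P|}$. By the reverse triangle inequality,
\begin{equation*}
    |\mu_\bx - \mu_\by| \;\le\; \Big(\sum_{p,q\in P}|w_\bx(p,q)-w_\by(p,q)|^2\Big)^{1/2}.
\end{equation*}
For each pair $(p,q)$ with $\|p-q\|_\infty\in(0,R]$, Step~1 yields
\begin{equation*}
|w_\bx(p,q)-w_\by(p,q)| \;\le\; L_h\,\big||\bx(p)-\bx(q)|-|\by(p)-\by(q)|\big| \;\le\; L_h\,\big(|\bx(p)-\by(p)|+|\bx(q)-\by(q)|\big),
\end{equation*}
while pairs with $\|p-q\|_\infty\notin(0,R]$ contribute zero.

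\textbf{Step 3 (combinatorial bound on the support).} Squaring, using $(a+b)^2\le 2(a^2+b^2)$, and summing over all pairs in the support of $\mathds{1}_{(0,R]}(\|p-q\|_\infty)$, I exploit that each fixed $p\in P$ has at most $(2R+1)^2-1$ neighbours in $\ell^\infty$-distance, so that
\begin{equation*}
    \sum_{p,q:\|p-q\|_\infty\le R}|\bx(p)-\by(p)|^2 \;\le\; (2R+1)^2\|\bx-\by\|_2^2,
\end{equation*}
and similarly with $q$ in place of $p$. Combining this with Step~2 yields $|\mu_\bx-\mu_\by|\le 2L_h(2R+1)\|\bx-\by\|_2$, which is the desired Lipschitz estimate with a constant independent of $n=|P|$.

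\textbf{Main obstacle and remarks.} There is no deep obstacle; the only subtlety to handle carefully is that $\mu_\bx$ involves a square root, which is not globally Lipschitz on $[0,\infty)$. The reverse triangle inequality for the Euclidean norm circumvents this issue neatly, so I would make a point of writing $\mu_\bx$ as $\|w_\bx\|_2$ rather than expanding the square root directly. The same combinatorial argument confirms the dimension-free character of the Lipschitz constant, which is exactly the corollary mentioned immediately before the proposition regarding the constant $c$ in \Cref{lem:stability}.
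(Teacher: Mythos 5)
Your proof is correct and shares the paper's skeleton: bound the derivative of $h_{\textnormal{i}}$ to get its Lipschitz constant, write $\mu_\bx$ as the Frobenius norm of the weight matrix, and use the reverse triangle inequality $|\mu_\bx-\mu_\by|\leq\|W_\bx-W_\by\|_F$ followed by an entrywise estimate. The genuine difference is in how the entrywise bounds are summed. The paper first relaxes each entry to $|w_\bx(p,q)-w_\by(p,q)|\leq 2\,w_{\textnormal{d}}(p,q)L'\|\bx-\by\|_2$ and then counts the nonzero entries of $W_\bx-W_\by$, which yields the Lipschitz constant $L''=2L'\overline{d}\,\overline{\kappa}\sqrt{n}$, growing with the dimension. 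You instead keep the pointwise differences $|\bx(p)-\by(p)|+|\bx(q)-\by(q)|$, square with $(a+b)^2\leq 2(a^2+b^2)$, and only then sum, exploiting that each node has at most $(2R+1)^2-1$ neighbours; this gives the dimension-free constant $2L_h(2R+1)$ and is strictly sharper. Both arguments establish what \Cref{hypothesis:differentiability} actually requires (Lipschitz continuity, with no constraint on the constant), so both are valid proofs. One small caveat on your closing remark: the corollary following this proposition asserts that the constant $c$ of \Cref{lem:stability} is independent of $n$, and the paper obtains this \emph{despite} its $\sqrt{n}$-dependent $L''$, by compensating with the lower bound $\inf_p\mu_\bx(p)\geq n\mathrm{e}^{-\sigma^{-2}}$ appearing squared in the denominator of $c$. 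So your dimension-free estimate for $\bx\mapsto\mu_\bx$ is not literally "the corollary," though it would make that corollary immediate and even simplify its proof.
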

\begin{proof}
We need to show that $h_{\textnormal{i}}$ and $\bx(p) \mapsto \mu_\bx(p)$ in \cref{eq:edge-weight-function:applications} are Lipschitz. The first part is trivial, since  $h_{\textnormal{i}}(t) = \textrm{e}^{-\frac{t^2}{\sigma^2}}$  is a smooth function with a bounded derivative for every $\sigma^2>0$.

Let us observe now that $\mu_\bx(p) = \| W_\bx\|_F$ for every $p\in P$, where $W_\bx$ is the adjacency matrix associated to $w_\bx$ and $\|\cdot\|_F$ is the Frobenius norm.  Therefore, for any $\bx, \by \in C(P)$ and any $p\in P$ it holds that
\begin{equation}\label{eq:prop:validity2-1}
|\mu_\bx(p) - \mu_\by (p)| = | \|W_\bx\|_F - \|W_\by\|_F| \leq \|W_\bx - W_\by\|_F.
\end{equation}
A generic element of  $W_\bx - W_\by$ in position $(p,q)$ is given by 
$$
w_\bx (p,q) - w_\by (p,q) = w_{\textnormal{d}}(p,q)\left(h_{\textnormal{i}}(|\bx(p) - \bx(q)|) - h_{\textnormal{i}}(|\by(p) - \by(q)|)  \right).
$$
Using the same arguments as in \Cref{eq:lem:stability2,eq:lem:stability:second_bound} of \Cref{lem:stability}, it is possible to show that 
$$
|w_\bx (p,q) - w_\by (p,q)| \leq 2w_{\textnormal{d}}(p,q)L'\|\bx - \by \|_2,
$$
where $L'$ is the Lipschitz constant of $h_{\textnormal{i}}$. From \cref{eq:edge-weight-function:applications}, it holds that 
\begin{eqnarray*}
\overline{d}=\max_{p,q}\{w_{\textnormal{d}}(p,q)\} &=& 1,\\
\max_{q\in P}\left\{\operatorname{card}\{ p\in P \mid w_{\textnormal{d}}(p,q) \neq 0  \}\right\} &=& \max_{q\in P}\left\{\operatorname{card}\{p\in P \mid \|p-q\|_1\leq R\}-1\right\} = \overline{\kappa},
\end{eqnarray*}
where $\overline{\kappa} = (2R+1)^2$. Therefore, it holds
\begin{align*}
\|W_\bx - W_\by\|_F = \sqrt{\sum_{q\in P} \sum_{p\in P}|w_\bx (p,q) - w_\by (p,q)|^2} \leq 2L'\overline{d}\overline{\kappa} \sqrt{n}\|\bx - \by \|_2,
\end{align*}
and, from \Cref{eq:prop:validity2-1}, we conclude that $\bx \mapsto \mu_\bx(p)$ is Lipschitz with constant $L''= 2L'\overline{d}\overline{\kappa} \sqrt{n}$, for every $p\in P$.
\end{proof}


\begin{corollary}
With the choices in \cref{eq:edge-weight-function:applications}, the constant $c$ in \Cref{lem:stability} is independent of $n$.
\end{corollary}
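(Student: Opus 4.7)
The strategy is to show that the apparent $\sqrt n$ growth of the Lipschitz constant $L''$ established in \Cref{prop:validity2} is exactly canceled by a matching $\sqrt n$ lower bound on $\mu_\bx$ itself, coming from the normalization $\mu_\bx = \|W_\bx\|_F$ chosen in \cref{eq:edge-weight-function:applications}. Since $c$ enters as a uniform Lipschitz-type constant for the map $\bx\mapsto\Delta_\bx$, expressed in terms of $L'$, $L''$, the finite-range combinatorial quantities $\overline d$ and $\overline\kappa$, and lower/upper bounds on $\mu_\bx$, showing that each ratio one obtains is dimension-free will be enough.

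The first step is to revisit the proof of \Cref{lem:stability} and isolate the building blocks of $c$. Writing
\[
\Delta_\bx \bz(p)-\Delta_\by \bz(p)
= \Bigl(\frac{1}{\mu_\bx(p)}-\frac{1}{\mu_\by(p)}\Bigr)\!\!\sum_{q} w_\bx(p,q)\bigl(\bz(p)-\bz(q)\bigr)
+\frac{1}{\mu_\by(p)}\!\!\sum_{q}\bigl(w_\bx(p,q)-w_\by(p,q)\bigr)\bigl(\bz(p)-\bz(q)\bigr),
\]
the first term picks up the Lipschitz constant of $1/\mu_\bx$, which is $L''/\mu_{\min}^2$, and the second picks up $L'\overline d/\mu_{\min}$. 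Multiplied by the maximum degree $\overline\kappa=(2R+1)^2-1$, which is fixed, one therefore gets $c\lesssim \overline\kappa\overline d\,\bigl(L''\mu_{\max}/\mu_{\min}^2 + L'/\mu_{\min}\bigr)$, so it suffices to show that $L''/\mu_{\min}$ and $\mu_{\max}/\mu_{\min}$ are bounded independently of $n$.

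The key step is the lower bound on $\mu_\bx$. Because the reference signals appearing in \Cref{lem:stability} come from a convergent family $\Psi^{\delta_k}_{\Theta_k}(\by^{\delta_k})\to \bx_0$, they are uniformly bounded: there is $M>0$, independent of $k$ (and hence of $n$ once the discretization is fixed), with $\|\bx\|_\infty\leq M$. Then $|\bx(p)-\bx(q)|\leq 2M$ and consequently
\[
w_\bx(p,q)\;\geq\; \mathds{1}_{(0,R]}(\|p-q\|_\infty)\,e^{-4M^2/\sigma^2}
\]
for every pair $(p,q)$. Since each interior pixel has exactly $\overline\kappa$ neighbors under $w_{\textnormal d}$, we obtain
\[
\mu_\bx^2 \;=\; \sum_{p,q}w_\bx(p,q)^2 \;\geq\; c_0\, n\,\overline\kappa\, e^{-8M^2/\sigma^2},
\]
where $c_0\in(0,1]$ accounts for the boundary pixels and is $n$-independent for reasonable grids. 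Hence $\mu_{\min}\gtrsim\sqrt{n}$ with an $n$-free implicit constant. A matching upper bound $\mu_{\max}\leq \sqrt{n\overline\kappa}$ follows directly from $w_\bx\leq 1$.

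Plugging these bounds together with $L''=2L'\overline d\,\overline\kappa\sqrt n$ from \Cref{prop:validity2} into the expression for $c$, the $\sqrt n$ factors cancel in both $L''/\mu_{\min}$ and $\mu_{\max}/\mu_{\min}$, and we conclude that $c$ is bounded by a quantity depending only on $R$, $\sigma$, $M$, and the Lipschitz constant of $h_{\textnormal i}$. The main obstacle in the plan is justifying the uniform $L^\infty$ bound $M$ on the relevant signals; this is where the hypothesis of convergence inside \Cref{lem:stability} is essential, and I would make this dependence explicit rather than claim dimension-freeness for arbitrary $\bx\in C(P)$.
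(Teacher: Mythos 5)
Your proposal is correct and takes essentially the same route as the paper: substitute the explicit choices of \cref{eq:edge-weight-function:applications} into the constant coming out of the proof of \Cref{lem:stability}, note that $\overline{d}$, $\overline{\kappa}$ and $\overline{h}_{\textnormal{i}}$ are dimension-free while $L''$ grows like $\sqrt{n}$ by \Cref{prop:validity2}, and check that this growth is absorbed by the lower bound on $\mu_\bx=\|W_\bx\|_F$. Your two-sided estimate $\mu_\bx\asymp\sqrt{n}$ is in fact the sharp Frobenius-norm scaling (the paper states order-$n$ bounds, but the cancellation goes through either way), and the uniform $L^\infty$ bound you worry about at the end is supplied in the paper simply by the normalization $\bx(p)\in[0,1]$ of gray-scale images.
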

\begin{proof}
Using the same notation as in the proof of \Cref{lem:stability}, it is an almost straightforward application of \Cref{prop:validity2}. Indeed, with the choices in \cref{eq:edge-weight-function:applications} we have that
\begin{equation*}
\overline{d} = 1; \quad \overline{\kappa} = (2R+1)^2, \quad \overline{h}_{\textnormal{i}}=1,\quad \overline{L}'' = 2L'\overline{\kappa} \sqrt{n},
\end{equation*}
where $L'$ is the Lipschitz  constant of $h_{\textnormal{i}}$. Recalling that $\bx(p)\in[0,1]$, then $\inf_p\{\mu_\bx (p)\} \geq ne^{-\sigma^{-2}}$  and $\sup_p\{\mu_\bx (p)\} \leq n$ for every $\bx$, and from \cref{eq:lem:stability5} we can fix 
$$
c =\frac{2\overline{\kappa}\left( 2nL' + 2L'\overline{\kappa} \sqrt{n}\right)}{n^2e^{-2\sigma^{-2}}},
$$
which is uniformly bounded with respect to $n$.
\end{proof}

Let us remark that with the choice of $\mu_\bx$ in \cref{eq:edge-weight-function:applications}, even if the Lipschitz constant of $\bx \mapsto \mu_\bx(p)$ increases with $n$, the convergence of $\graphLdelta$ for $\delta\to 0$ is uniform with respect to~$n$, thanks to the preceding corollary and \Cref{lem:stability}. This reflects the observations made in \cite{bianchi2021graph}, where it was introduced  the node measure in \cref{eq:edge-weight-function:applications} to uniformly bound the spectrum of $I + \Delta_\bx^T\Delta_\bx$, with respect to the dimension $n$, and guarantee then a fast convergence of the \texttt{lsqr} algorithm.

\subsection{Optimization method}
To find an approximate solution to our $\ell^2-\ell^1$ model \cref{graphModel}, we use a Majorization–Minimization strategy combined with a Generalized Krylov Subspace approach as proposed in \cite{lanza2015generalized}. Our implementation incorporates a restarting strategy for the Krylov subspace and an automatic estimation of the regularization parameter $\alpha$ using the discrepancy principle. For further details, see \cite{buccini2023limited}.

\section{Numerical experiments}\label{sec:numerical_experiments}

This section is divided into two parts, with each part concentrating on tests for a specific dataset. In both scenarios, we rigorously examined the performance of the $\texttt{graphLa+}\Psi$ method to provide a comprehensive analysis of the method's robustness and adaptability. 

More in detail, we consider a wide range of reconstructors $\Psi$, including Filter Back Projection (\texttt{graphLa+FBP}), standard Tikhonov (\texttt{graphLa+Tik}), Total Variation (\texttt{graphLa+TV}), and the trained DNN (\texttt{graphLa+Net}) described in the previous \Cref{sec:DNNs}.  In all cases, the ground truth images $\gt$ are drawn outside of the training sets defined for the DNN in \Cref{ssec:DNN_training}. 

The regularization parameters in the Tikhonov and TV methods are calculated using Generalized Cross Validation and the discrepancy principle, respectively.
 
For comparison, we will include the reconstruction achieved by our method using the ground truth image $\gt$ as a first approximation, labeled as $\texttt{graphLa+}\gt$. This serves as an upper bound reference for the effectiveness of all the \graphLa methods.

The quantitative results of our experiments will be measured by the reconstruction relative  error (RRE) and peak signal-to-noise ratio (PSNR), where
$$
\operatorname{RRE}(\bx)\coloneqq \frac{\|\gt-\bx\|^2}{\|\gt\|^2}, \qquad \operatorname{PSNR}(\bx)\coloneqq20\log_{10}\left(\frac{255}{\|\gt -\bx\|}\right), 
$$
and by the structural similarity index (SSIM) \cite{wang2004image}.
Finally, all the numerical tests are replicable and the codes can be downloaded from \cite{devangelista2023graphlaplus}.

\subsection{Example 1: COULE}\label{ssec:COULE}

In this first example, we tested our proposal on an image of the COULE test set acquired by $n_a = 60$ projections, a detector shape of $n_d = \lfloor 256\sqrt{2} \rfloor$, and corrupted with white Gaussian noise with level intensity of~$2\%$. Note that, since $n = 256^2 = 65536$ and $m = n_d \cdot n_a = 21720$, then $m \ll n$, meaning that the problem is highly sparse. 
Regarding the parameter selection for the edge-weight function in  \cref{eq:edge-weight-function:applications}, we chose $R=5$ and $\sigma=10^{-3}$.

\begin{table}\label{tab:Coule}
\centering
\begin{tabular}{|l|c|c|c|}
\hline
 \textbf{Initial reconstructors $\Psi$}&  \textbf{RRE} & \textbf{SSIM} & \textbf{PSNR} \\ \hline
 \texttt{FBP} & 0.1215 & 0.1220 & 18.3101    \\ 
 \texttt{Tik} & 0.0622 & 0.3280 & 24.1306  \\ 
 \texttt{TV}  & 0.0450 & 0.6793 & 26.9320  \\ 
 \texttt{Net} & 0.0205 & 0.9396 & 33.7714  \\ \hline
 \textbf{\graphLa} & \multicolumn{3}{c}{} \vline \\ \hline
 $\texttt{graphLa+FBP}$  & 0.0364 & 0.6419 & 28.7701 \\ 
 $\texttt{graphLa+Tik}$  & 0.0352 & 0.8874 & 29.0812 \\ 
 $\texttt{graphLa+TV}$   & 0.0228 & 0.9697 & 32.8313 \\ 
 $\texttt{graphLa+Net}$  & \textbf{0.0156} & \textbf{0.9724} & \textbf{36.1128}  \\ 
 \hdashline
 $\texttt{graphLa+}\gt$ & 0.0063 & 0.9820 & 43.9905 \\ 
 \hline
 \textbf{Other comparison methods} & \multicolumn{3}{c}{} \vline \\ \hline
 \texttt{ISTA} & 0.1769 & 0.8682 & 25.3177 \\ \texttt{FISTA} & 0.1776 & 0.8883 & 25.2839 \\   \texttt{NETT} & 0.0302 & 0.7531 & 30.4040  \\ \hline
\end{tabular}
\caption{Quality of initial and final reconstruction for different $\Psi$ for the COULE dataset.}
\end{table}


The quality of the reconstructions achieved with different operators $\Psi$ is presented in Table \ref{tab:Coule}. The upper part of the table displays the values of the initial reconstructors $\Psi$, while the middle part shows the values obtained by combining \graphLa with the corresponding initial reconstructor $\Psi$. To provide a more comprehensive and complete analysis of the performance of our proposal, we also tested three other solvers. 
The resulting reconstructions are shown in Figure \ref{fig:ISTA-FISTA}. 
Given that the ground truth is sparse, we considered an $\ell^2-\ell^1$ problem with a Haar wavelet regularization operator and applied both FISTA and ISTA. Since both algorithms heavily depend on the choice of a proper step length, we computed it by estimating the Lipschitz constant of the operator $K^TK$ using ten iterations of the power method. 

\begin{figure}[htbp]
    \centering
    \subfigure[True image $\gt$]{\includegraphics[width=0.20\textwidth]{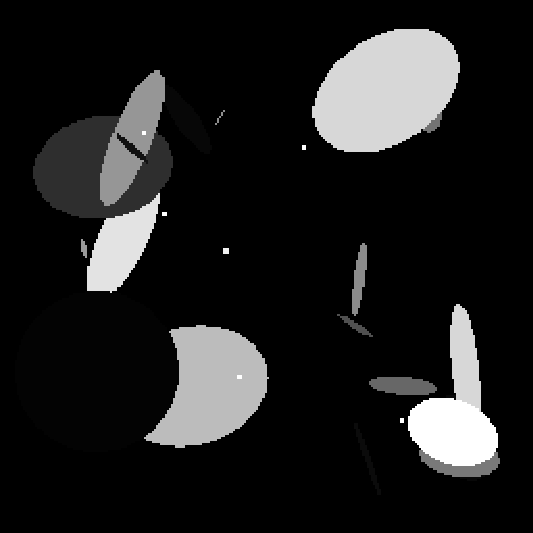}}
    \hspace{0.1cm}
    \subfigure[NETT]{\includegraphics[width=0.20\textwidth]{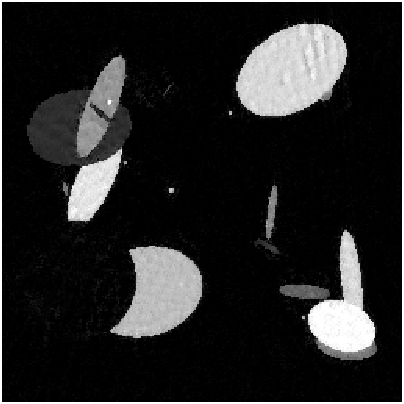}}
     \hspace{0.1cm}
    \subfigure[FISTA]{\includegraphics[width=0.20\textwidth]{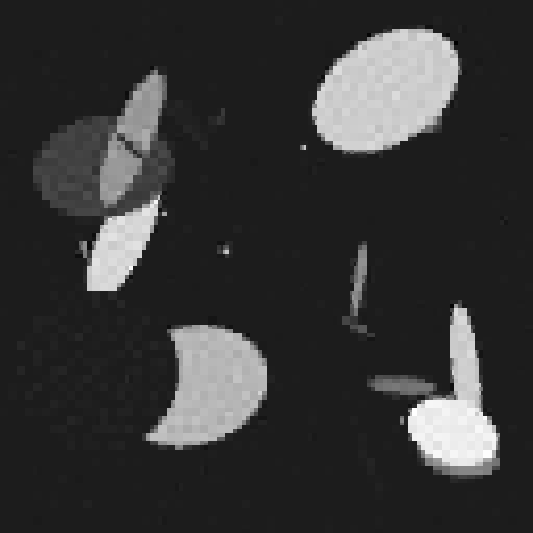}} 
     \hspace{0.1cm}
    \subfigure[ISTA]{\includegraphics[width=0.20\textwidth]{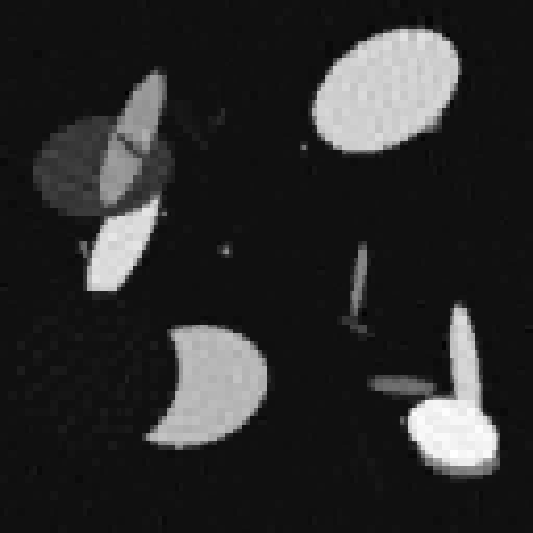}}
    \caption{Visual comparison: ground truth (a), NETT (b), FISTA (c) and ISTA (d).}\label{fig:ISTA-FISTA}
\end{figure}

Lastly, we considered a regularization DNN-based method called \texttt{NETT}~\cite{li2020nett}, where the convolutional DNN is trained to kill the artifacts. For more details about the DNN architecture and the training process we refer to \cite[Subsection 5.2]{bianchi2023uniformly}. The main regularization parameter is tuned by hand in order to get the possible best outcome. The NETT method achieved better results than the proximal methods in terms of both RRE and PSNR. However, the middle part of Table~6.1 shows that \texttt{graphLa+TV} and \texttt{graphLa+Net} are the optimal choices.

Notably, the \graphLa method results in a substantially greater improvement across all metrics for all initial reconstructors $\Psi$, with the highest performance attained by \texttt{graphLa+Net}.

As further confirmation, Figure \ref{fig:COULE_rec} displays the reconstructions obtained for different $\Psi$. The level of details and sharpness in the  \texttt{graphLa+Net} image is incomparable with all the other methods, besides \texttt{graphLa+TV}, that achieve similar performance.

\begin{figure}\label{fig:COULE_rec}
    \centering
    \subfigure[\texttt{FBP}]
    {        
        \includegraphics[width=0.20\textwidth] {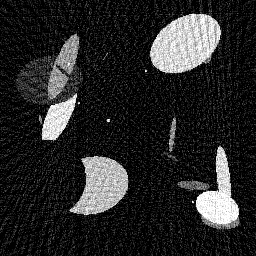} 
    }
    \subfigure[\texttt{Tik}]
    {        
        \includegraphics[width=0.20\textwidth] {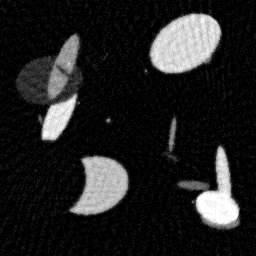} 
    }
    \subfigure[\texttt{TV}]
    {        
        \includegraphics[width=0.20\textwidth] {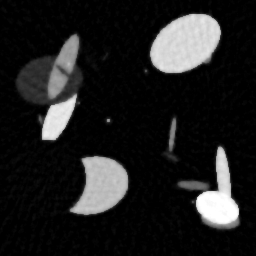} 
    }
    \subfigure[\texttt{Net}]
    {        
        \includegraphics[width=0.20\textwidth] {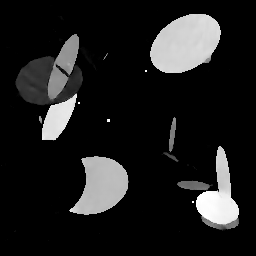} 
    }
    \\
    \subfigure[$\texttt{graphLa+FBP}$]
    {        
        \includegraphics[width=0.20\textwidth] {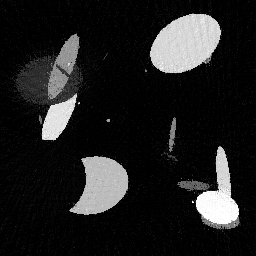} 
    }
    \subfigure[$\texttt{graphLa+Tik}$]
    {        
        \includegraphics[width=0.20\textwidth] {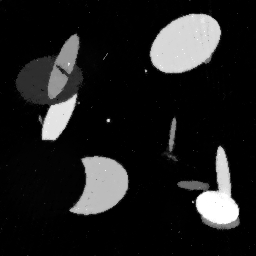} 
    }
    \subfigure[$\texttt{graphLa+TV}$]
    {        
        \includegraphics[width=0.20\textwidth] {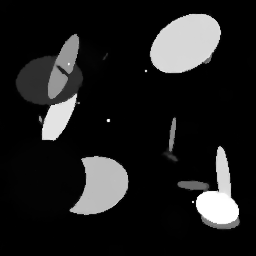} 
    }
    \subfigure[$\texttt{graphLa+Net}$]
    {        
        \includegraphics[width=0.20\textwidth] {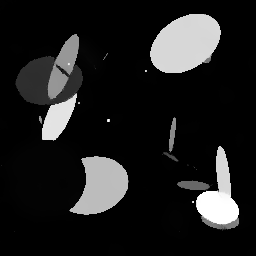} 
    }
    \caption{Initial and final reconstructions using \textnormal{$\texttt{graphLa+}\Psi$} method for different $\Psi$.}
\end{figure}

As a final investigation into the capabilities of our proposal, we tested the stability of the method against varying noise intensity. In Figure \ref{fig:COULE_stability}, we present the PSNR and SSIM values for various levels of noise. Similar to the previous analysis, the purple line represents the reconstruction obtained by utilizing the true image $\gt$ to compute the graph Laplacian. Notably, even though our neural network was trained with a $0\%$ noise level, the $\texttt{graphLa+Net}$ method consistently outperforms all other cases across all noise levels. Additionally, Figure \ref{fig:COULE_stability} shows that the integration of the graph Laplacian with the DNN serves as an effective regularization method, in contrast to the standalone application of the DNN. Indeed, while the accuracy of the  DNN does not improve as the noise intensity approaches zero, pairing it with the graph Laplacian results in a significant accuracy improvement, consistent with the effects of a regularization method.

\begin{figure}\label{fig:COULE_stability}
    \centering
    \subfigure[PSNR]
    {        
        \includegraphics[width=0.45\textwidth] {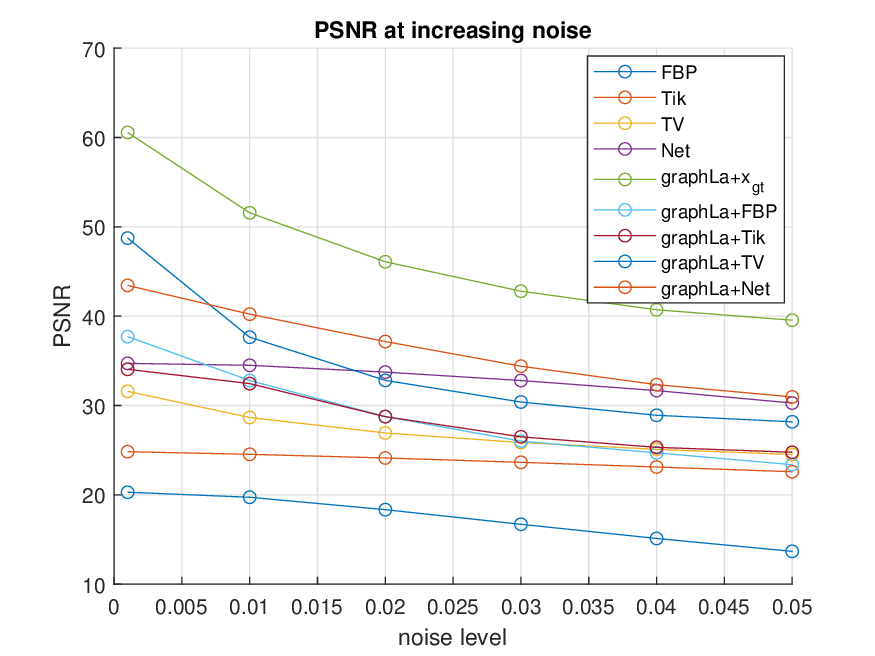} 
    }
    \subfigure[SSIM]
    {        
        \includegraphics[width=0.45 \textwidth] {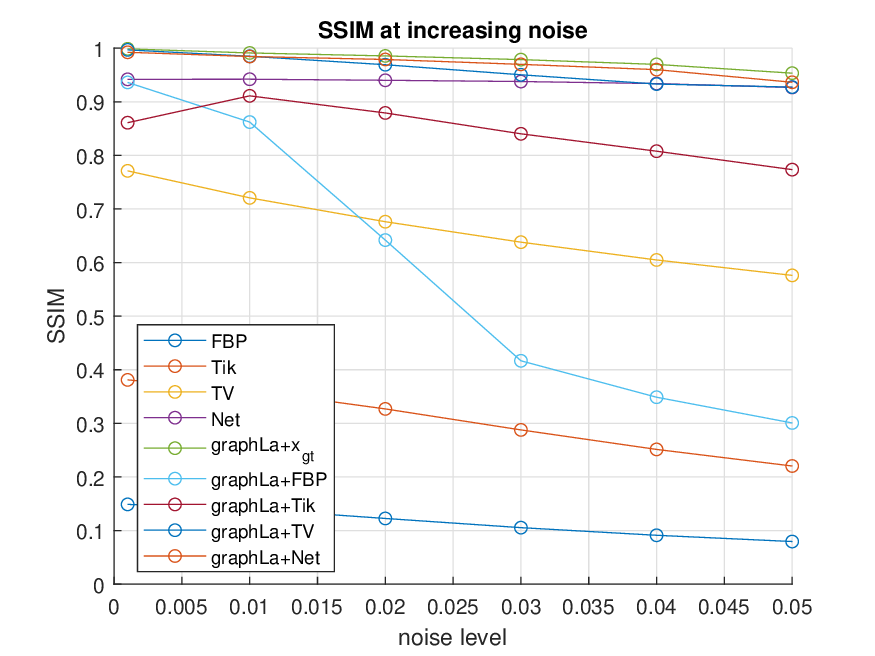} 
    }
    \caption{PSNR and SSIM for different levels of noise and different reconstructors $\Psi$ for the COULE dataset.}
\end{figure}

\subsection{Example 2: Mayo}\label{ssec:Mayo}

 In this second example, we tested our proposal on an image of the Mayo test set acquired by $n_a = 180$ projections, a detector shape of $n_d = \lfloor 256\sqrt{2} \rfloor$, and  corrupted with  white Gaussian noise with level intensity of~$1\%$. Regarding the parameter selection for the edge-weight function in  \cref{eq:edge-weight-function:applications}, we chose $R=5$ and $\sigma=2\times 10^{-4}$, except for the $\texttt{graphLa+Net}$ method for which we used $R=3$ and $\sigma=10^{-3}$.

Given that the Mayo dataset reflects a real-world scenario, the ground truth images $\gt$, used for generating sinograms and for comparison, are not the actual true images. Instead, they are reconstructions themselves, inherently containing some level of noise. Consequently, comparing metrics for a fixed level of additional noise, as done in \Cref{tab:Coule}, is a bit less informative. Instead, it is still interesting to evaluate the \graphLa method across different reconstructors $\Psi$ and various levels of noise intensity using both PSNR and SSIM metrics, see \Cref{fig:Mayo_stability}. As previously noted in the COULE example, the $\texttt{graphLa+Net}$ method consistently outperforms all other cases, even if the neural network was trained with a $0\%$ noise level.

\begin{figure}\label{fig:Mayo_stability}
    \centering
    \subfigure[PSNR]
    {        
        \includegraphics[width=0.45\textwidth] {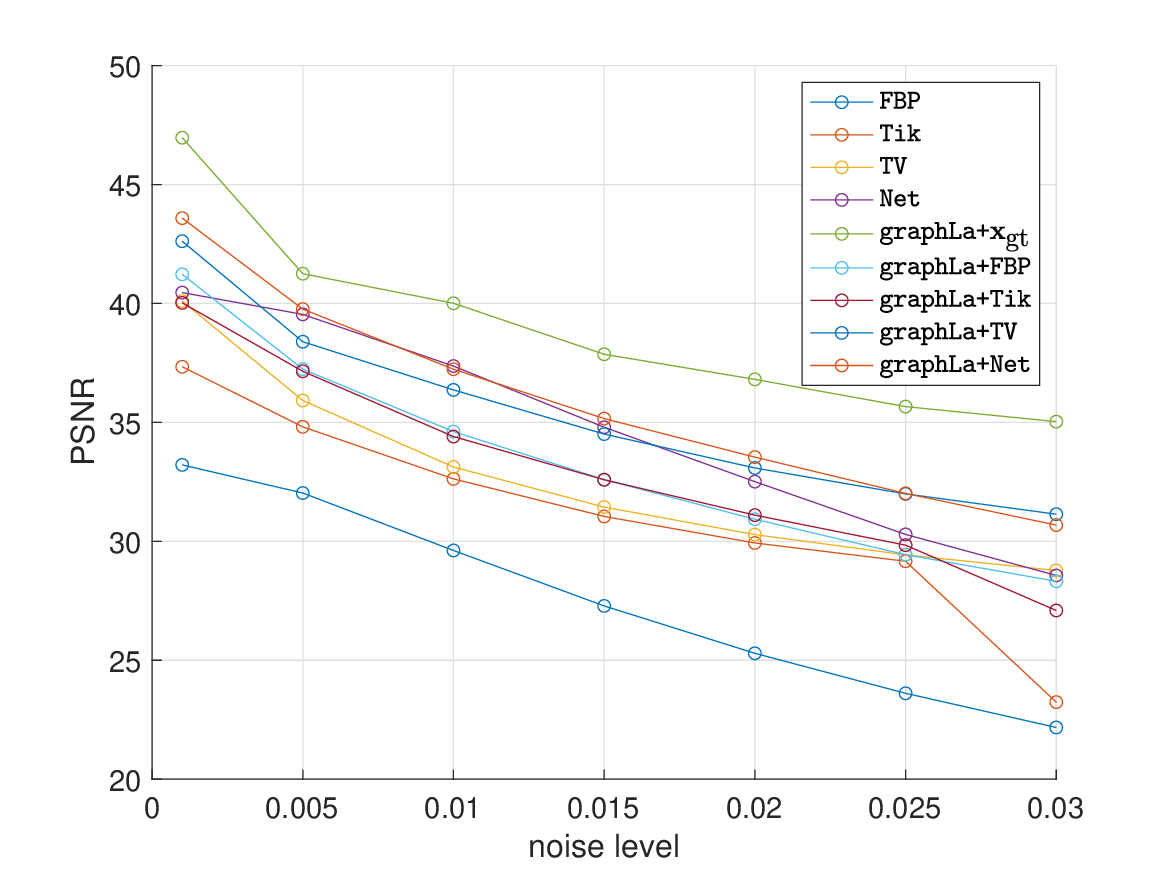} 
    }
    \subfigure[SSIM]
    {        
        \includegraphics[width=0.45\textwidth] {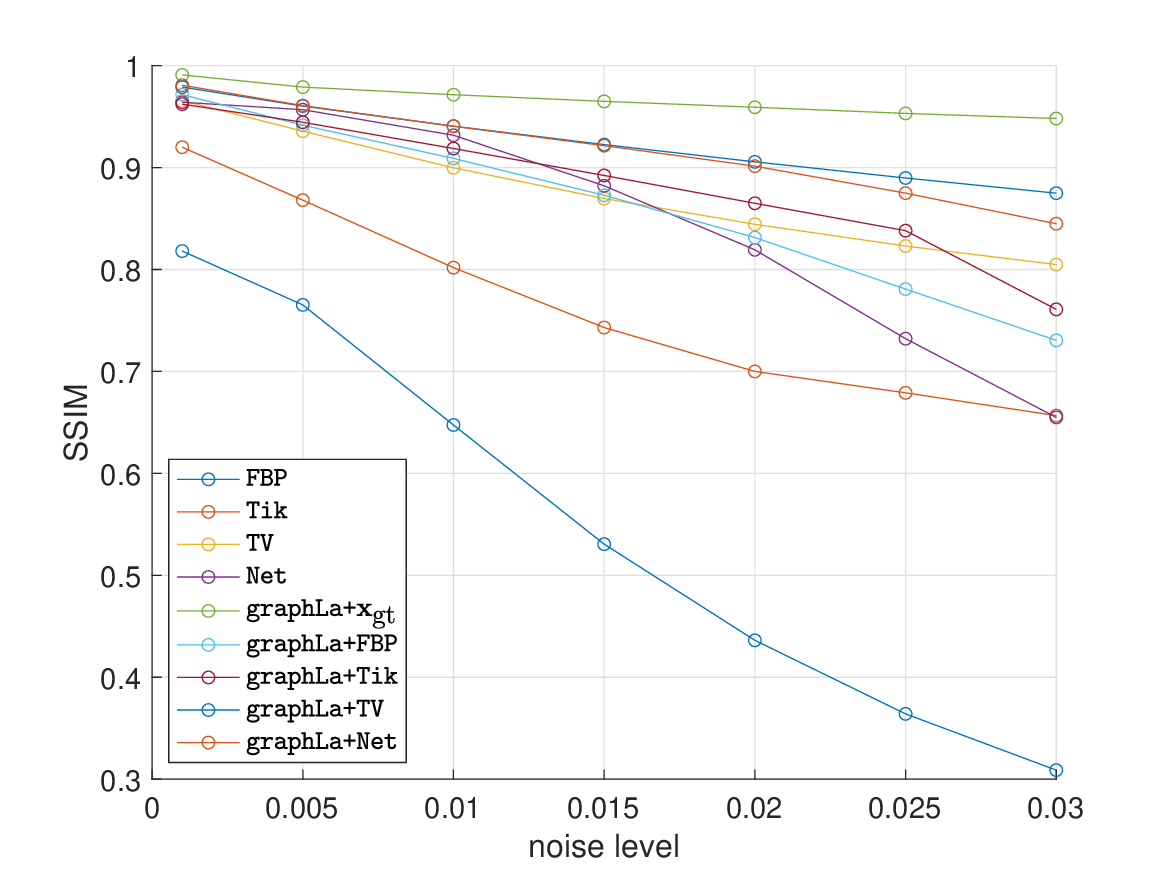} 
    }
    \caption{PSNR and SSIM for different levels of noise and different reconstructors $\Psi$ for the Mayo dataset.}
\end{figure}

 In \Cref{fig:Mayo_rec}, we present a visual inspection of some of the reconstructions. Notably, the \texttt{graphLa+Net} image exhibits the sharpest quality compared to all other cases, and being very close to the upper limit given by $\texttt{graphLa+}\gt$.

\begin{figure}\label{fig:Mayo_rec}
    \centering
    \subfigure[True image $\gt$]
    {
    \begin{tikzpicture}
    \node[anchor=south west,inner sep=0] (image) at (0,0) {\includegraphics[width=0.20\textwidth] {Mayo_true}};
    \begin{scope}[x={(image.south east)},y={(image.north west)}]
        \draw[red,thin] (0.37,0.19) rectangle (0.61,0.47);
    \end{scope}
    \end{tikzpicture}
    }
    \subfigure[\texttt{Tik}]
    {        
        \includegraphics[width=0.20\textwidth] {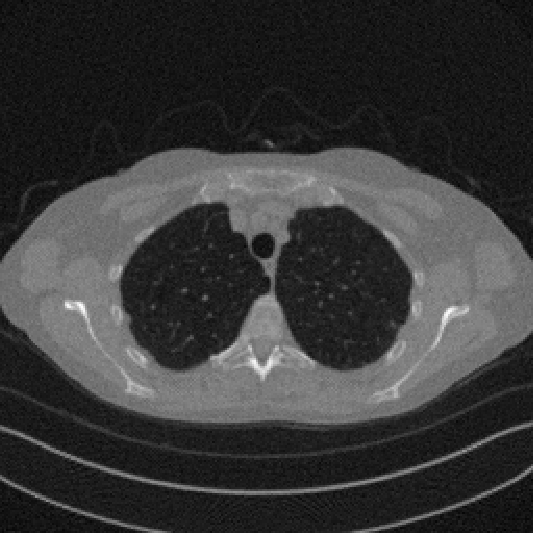} 
    }
    \subfigure[\texttt{TV}]
    {        
        \includegraphics[width=0.20\textwidth] {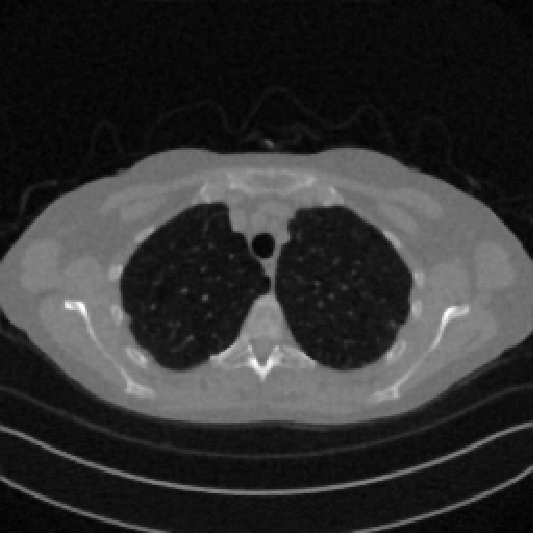} 
    }
    \subfigure[\texttt{Net}]
    {        
        \includegraphics[width=0.20\textwidth] {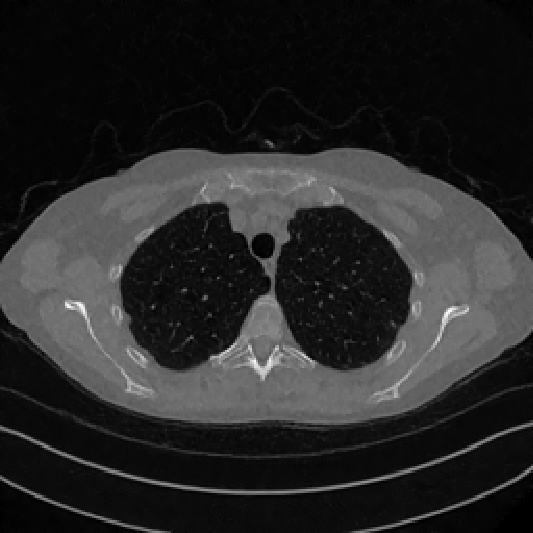} 
    }
    \\
    \subfigure[ $\texttt{graphLa+}\gt$]
    {        
        \includegraphics[width=0.20\textwidth] {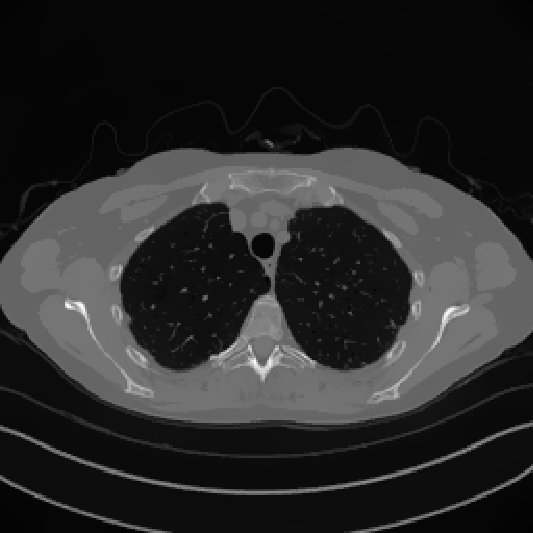} 
    }
    \subfigure[$\texttt{graphLa+Tik}$]
    {        
        \includegraphics[width=0.20\textwidth] {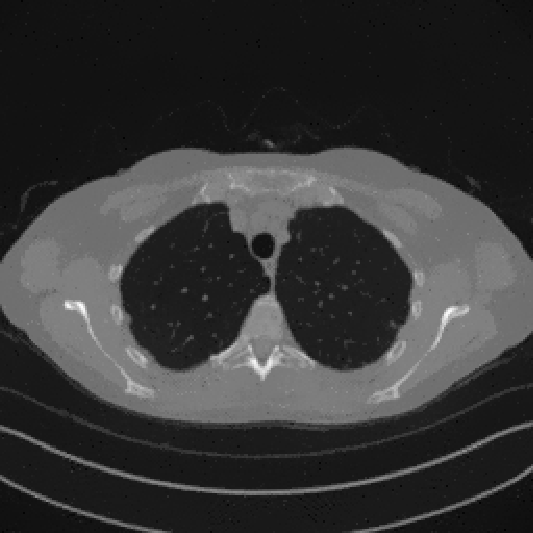} 
    }
    \subfigure[$\texttt{graphLa+TV}$]
    {        
        \includegraphics[width=0.20\textwidth] {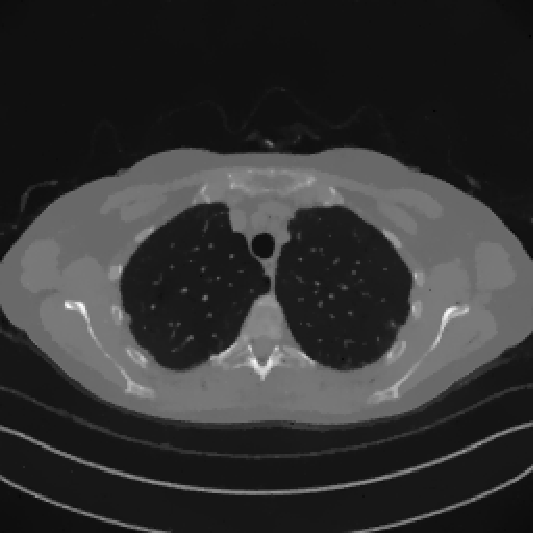} 
    }
    \subfigure[$\texttt{graphLa+Net}$]
    {        
        \includegraphics[width=0.20\textwidth] {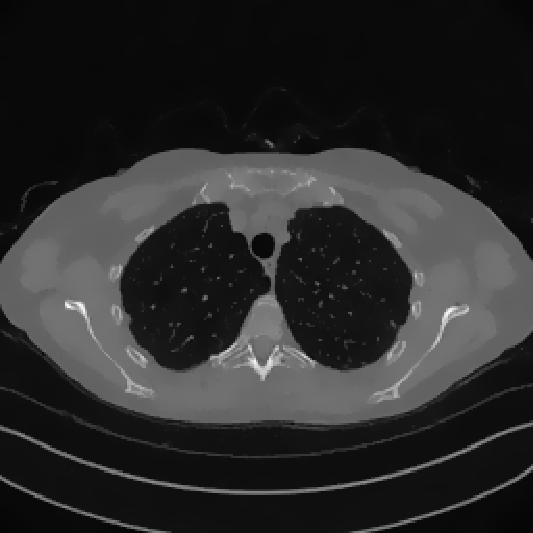} 
    }
    \caption{Initial and final reconstructions using \textnormal{$\texttt{graphLa+}\Psi$} method for different $\Psi$.} 
\end{figure}

As additional confirmation, in Figure \ref{fig:Mayo_zoom} we zoom on the central part of the considered image. In this way, we can clearly note that the $\texttt{graphLa+Net}$ approach achieves also an extraordinary quality of detail in the reconstruction. 

\begin{figure}
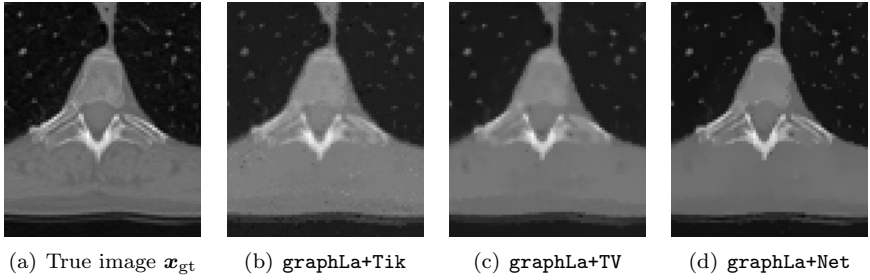
\label{fig:Mayo_zoom}
    \centering
    \subfigure[True image $\gt$]
    {        
        \includegraphics[width=0.20\textwidth,trim={92px 45px 92px 125px},clip] {Mayo_true} 
    }
    \subfigure[$\texttt{graphLa+Tik}$]
    {        
        \includegraphics[width=0.20\textwidth,trim={92px 45px 92px 125px},clip] {Mayo_LaTik} 
    }
    \subfigure[$\texttt{graphLa+TV}$]
    {        
        \includegraphics[width=0.20\textwidth,trim={92px 45px 92px 125px},clip] {Mayo_LaTV} 
    }
    \subfigure[$\texttt{graphLa+Net}$]
    {        
        \includegraphics[width=0.20\textwidth,trim={92px 45px 92px 125px},clip] {Mayo_LaNet} 
    }
    \caption{Zoom in of the central lower part for different methods.} 
\end{figure}

\section{Conclusions}\label{sec:conclusions}
In this work, we introduced and analyzed a novel regularization method that uses a graph Laplacian operator built upon a first approximation of the solution through a reconstructor $\Psi$. We demonstrated that under certain, albeit very weak, hypotheses on the recontructor $\Psi$, \graphLa is a convergent and stable regularization method. In all the proposed numerical examples, \graphLa greatly enhanced the quality of the reconstructions for every initial reconstructor $\Psi$. 

Moreover, leveraging the regularization property of $\texttt{graphLa+}\Psi$, we suggested using a DNN as the first reconstructor $\Psi$. This new hybrid method called $\texttt{graphLa+Net}$, merges the regularity of a standard variational method and the accuracy of a DNN, giving as a result a stable regularization method with very high accuracy.  

In forthcoming works, we will focus more on sharpening the choices for the edge-weight function $w_\bx$ such as building an automatic rule for estimating its parameters.

\appendix
\section{Theoretical analysis}\label{sec:appendix}
In this appendix we collect the proofs of the results presented in \Cref{ssec:theoretical_restults}. We will assume that all the hypotheses introduced so far are satisfied, i.e. \Cref{hypothesis:differentiability,hypothesis:existence,hypothesis:null_intersection,hypothesis:null_stability,hypothesis:Psi_convergence,hyp:stability}. 

We begin with the invariance property of the null space of the graph Laplacians built upon $\Psi^\delta_\Theta$.

\vspace{0.1cm}
\noindent\Cref{lem:null_stability}.
\vspace{-0.15cm}
\begin{proof}
The null space of a generic graph Laplacian, as per \Cref{def:graph_Laplacian},  is given by the subspace of functions which are constant on the connected components of the node set $P$. See for example \cite[Lemmas 0.29 and 0.31]{keller2021graphs}.  By \Cref{hypothesis:null_stability}, it is easy to check that a sequence $\{p_i\}_{i=0}^k$ is a walk with respect to $w_{\Psi^\delta_\Theta}$ if and only if it is a walk with respect to $w_0$. Therefore, all the connected components of $P$, identified by $w_{\Psi^\delta_\Theta}$ and $w_0$, are invariant. This concludes the proof.   
\end{proof}

\subsection{Existence of solutions and well-posedness of $\texttt{graphLa+}\Psi$}\label{ssec:existence_well-posedness} 
Define
\begin{equation}\label{def:GammaFunctional}
	\Gamma(\bx) \coloneqq \frac{1}{2}\|K\bx - \by^\delta\|_2^2 + \alpha \|\graphLdelta\bx\|_1.
\end{equation}
We recall that a (nonnegative) functional $\Gamma \colon X \to [0,\infty)$ is \emph{coercive} if $\Gamma(\bx) \to \infty$ for $\|\bx\| \to \infty$, where $\|\cdot\|$  can be any norm on $X\simeq\R^n$. 

Thanks to \Cref{hypothesis:null_intersection}, the functional \cref{def:GammaFunctional} is coercive, for every fixed $\alpha>0$ and $\delta\geq0$.  Although this should certainly be well-known, for the convenience of the reader we include a short proof in the next Lemma~\ref{lem:coercivity}.

\begin{lemma}\label{lem:coercivity}
$\Gamma$ is coercive for every fixed $\alpha>0$ and $\delta\geq 0$.
\end{lemma}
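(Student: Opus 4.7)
The plan is to argue by contradiction using a standard normalization/compactness argument, exploiting the trivial-kernel condition provided by Lemma~A.1 together with Hypothesis~\ref{hypothesis:null_intersection}.

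First I would suppose, for the sake of contradiction, that $\Gamma$ is not coercive. Then there exists a sequence $\{\bx_n\}\subset X$ with $\|\bx_n\|_2 \to \infty$ and $\Gamma(\bx_n)\leq C$ for some constant $C>0$. Since both terms of $\Gamma$ are nonnegative, this immediately yields the two bounds
\begin{equation*}
 \|K\bx_n - \by^\delta\|_2 \leq \sqrt{2C}, \qquad \|\graphLdelta \bx_n\|_1 \leq C/\alpha.
\end{equation*}
The normalization trick is to set $\bz_n \coloneqq \bx_n/\|\bx_n\|_2$, so that $\{\bz_n\}$ lies on the unit sphere of $X\simeq\R^n$. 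By finite-dimensional compactness a subsequence (not relabeled) converges to some $\bz\in X$ with $\|\bz\|_2=1$.

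Next I would divide both inequalities by $\|\bx_n\|_2$ and pass to the limit. By linearity of $K$ and $\graphLdelta$,
\begin{equation*}
 \|K\bz_n\|_2 \leq \frac{\sqrt{2C}+\|\by^\delta\|_2}{\|\bx_n\|_2}\to 0, \qquad \|\graphLdelta \bz_n\|_1 \leq \frac{C}{\alpha\|\bx_n\|_2}\to 0.
\end{equation*}
Continuity of $K$ and $\graphLdelta$ on a finite-dimensional space then gives $K\bz=\mathbf{0}$ and $\graphLdelta\bz=\mathbf{0}$. By Lemma~\ref{lem:null_stability}, $\ker(\graphLdelta)=V$, so $\bz\in \ker(K)\cap V$, and by Hypothesis~\ref{hypothesis:null_intersection} this forces $\bz=\mathbf{0}$, contradicting $\|\bz\|_2=1$.

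There is no real obstacle here: everything rests on finite-dimensionality (so the unit sphere is compact and all norms are equivalent, making the conclusion independent of which norm $\|\cdot\|$ is used in the statement) and on the null-space intersection condition. The only minor care required is to handle the $\by^\delta$ offset in the data-fidelity term, which vanishes after normalization; the case $\delta=0$ (with $\by^\delta=\by$) is covered by the same computation since the argument uses only the boundedness of $\|\by^\delta\|_2$.
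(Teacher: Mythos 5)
Your proof is correct, but it takes a genuinely different route from the paper's. You argue by contradiction with a normalization--compactness argument: rescale an unbounded sequence with bounded $\Gamma$-values to the unit sphere, extract a convergent subsequence, and show the limit lies in $\ker(K)\cap\ker(\graphLdelta)=\ker(K)\cap V=\{\boldsymbol{0}\}$ (via \Cref{lem:null_stability} and \Cref{hypothesis:null_intersection}), contradicting unit norm. The paper instead proceeds directly: it decomposes $\bx$ along $V$ and $V^\perp$, extracts two explicit positive constants $\gamma_1=\inf\{\|\graphLdelta\bu\|_1/\|\bu\|_1 : \bu\in V^\perp\setminus\{\boldsymbol{0}\}\}$ and $\gamma_2=\inf\{\|K\bv\|_2/\|\bv\|_2 : \bv\in V\setminus\{\boldsymbol{0}\}\}$, and handles separately the cases where the $V^\perp$-component or the $V$-component blows up, bounding $\Gamma$ from below by each in turn. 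Both arguments rest on exactly the same two structural facts (the kernel identification of \Cref{lem:null_stability} and the trivial intersection of \Cref{hypothesis:null_intersection}), and both use finite-dimensionality. What your soft argument buys is brevity; what the paper's quantitative version buys is reusability: the explicit constants $\gamma_1,\gamma_2$ are precisely what is perturbed in \Cref{lem:uniform_coercivity} to obtain coercivity \emph{uniformly} over the sequence of operators $\Delta_{\Psi^{\delta_k}_{\Theta_k}}$ (via the bound $\gamma_k\geq\gamma_0/2$), a uniformity that a contradiction argument for a single fixed operator does not hand you directly. Your proof is a valid replacement for this lemma as stated.
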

\begin{proof}
	Let $V$ be the invariant null space of $\graphLdelta$ from \Cref{lem:null_stability}, and let us indicate with $\pi$ and $\pi_\perp$ the projection into $V$ and $V^\perp$, respectively. In general, it holds that
	\begin{equation}\label{eq:lem:coercivity:1}
		\qquad     \inf_{\substack{\bu \in V^\perp\\ \bu\neq \boldsymbol{0}}} \frac{\|\graphLdelta\bu\|_1}{\|\bu\|_1} \geq \gamma_1 >0.
	\end{equation}
 Since $\ker(K)\cap V=\{\mathbf{0}\}$ by \Cref{hypothesis:null_intersection}, then it holds that too
	\begin{equation}\label{eq:lem:coercivity:2}
		\qquad     \inf_{\substack{\bv \in V\\ \bv\neq \boldsymbol{0}}} \frac{\|K\bv\|_2}{\|\bv\|_2} \geq \gamma_2 >0.
	\end{equation}
	Fix a sequence $\{\bx_j\}$ such that $\|\bx_j\|_2 \to \infty$. We want to show that $\Gamma(\bx_j)\to \infty$. Clearly, for $j\to \infty$
	\begin{equation*}
		\|\bx_j\|_2 \to \infty \quad \mbox{if and only if} \quad \|\pi\bx_j\|_2^2 + \|\pi_\perp\bx_j\|_1 \to \infty.
	\end{equation*}
	There are two cases:
	\begin{enumerate}[(i)]
		\item\label{lem:coercivity:item1} $\lim_j\|\pi_\perp\bx_j\|_1 =\infty$;
		\item\label{lem:coercivity:item2} $\liminf_j\|\pi_\perp\bx_j\|_1 \leq c < \infty$ and $\lim_j\|\pi\bx_j\|_2^2 =\infty$.
	\end{enumerate}
	If we are in \ref{lem:coercivity:item1}, then  by \cref{eq:lem:coercivity:1}
	\begin{align}
		\alpha\gamma_1\|\pi_\perp\bx_j\|_1 &\leq \alpha\|\graphLdelta\pi_\perp\bx_j\|_1 \nonumber\\
		&\leq   \frac{1}{2}\|K\bx_j - \by^\delta\|_2^2 + \alpha\|\graphLdelta\bx_j\|_1 = \Gamma(\bx_j). \label{eq:lem:coercivity:3}
	\end{align}
	If we are in \ref{lem:coercivity:item2}, then  by \cref{eq:lem:coercivity:2}
	\begin{align}
		\frac{\gamma_2}{4}\|\pi\bx_j\|_2^2 \leq  \frac{1}{4}\|K\pi\bx_j\|_2^2 &\leq \frac{1}{2}\|K\bx_j - \by^\delta\|_2^2 + \alpha\|\graphLdelta\bx_j\|_1 + \frac{1}{2}\|K\pi_\perp\bx_j-\by^\delta\|_2^2 \nonumber\\
		&= \Gamma(\bx_j)  + \frac{1}{2}\|K\pi_\perp\bx_j-\by^\delta\|_2^2. \label{eq:lem:coercivity:4}
	\end{align}
 Notice that from \ref{lem:coercivity:item2} it follows that $\liminf_j\|K\pi_\perp\bx_j-\by^\delta\|_2^2$ is bounded.
 
	Passing to the $\liminf$ in both \cref{eq:lem:coercivity:3,eq:lem:coercivity:4} we conclude. 
\end{proof}	

\begin{remark}\label{rem:unchanged}
The same result as in \Cref{lem:coercivity} applies when replacing $\Psi^\delta_\Theta$ with $\bx_0$ in $\Gamma$, and the proof remains unchanged.
\end{remark}

\vspace{0.1cm}
\noindent\Cref{prop:existence_min}.
\vspace{-0.15cm}
\begin{proof}
	The existence relies on standard topological arguments, but we provide a comprehensive exposition 
	for the convenience of the reader. 
	
	Let 
	$$
	c\coloneqq \inf\{  \|\graphL\bx\|_1 \mid \bx \in X, \; K\bx = \by \},
	$$
	which is well-defined thanks to \Cref{hypothesis:existence}. Therefore there exists a sequence $\{\bx_j\}$ such that $K\bx_j = \by$ for every $j$ and $\lim_j \|\graphL\bx_j\|_1=c$. In particular, there exists $c_1>0$ such that 
	\begin{equation}\label{eq:lem:existence_min_1}
		\|\graphL\bx_j\|_1\leq c_1 \quad \mbox{for every } j.
	\end{equation}
	
	There are two possible cases:
	\begin{enumerate}[(i)]
		\item\label{item:lem:existence_min_1} $\|\bx_j\|_2 \leq c_2$ for some $c_2>0$, for every $j$;
		\item\label{item:lem:existence_min_2} There exists a subsequence $\{\bx_{j'}\}$, such that $\lim_{j'}\|\bx_{j'}\|_2 = \infty$.
	\end{enumerate}
	If we are in case \ref{item:lem:existence_min_1}, then by compactness and continuity arguments we can conclude that there exists $\bx_{\textnormal{sol}}$ such that
	\begin{equation*}
		\lim_{j'} \bx_{j'} = \bx_{\textnormal{sol}}, \quad \mbox{and} \quad \begin{cases}
			K\bx_{\textnormal{sol}}=\by,\\
			\|\graphL\bx_{\textnormal{sol}}\|_1 = c,
		\end{cases} 
	\end{equation*}
	that is, $\bx_{\textnormal{sol}}$ is a graph-minimizing solution with respect to $\bx_0$. 
	
	Suppose now we are in case \ref{item:lem:existence_min_2}. By \Cref{lem:coercivity} and \Cref{rem:unchanged}, $\Gamma(\bx_{j'})\to \infty$ for any fixed $\alpha,\delta$. Since $\|K\bx_{j'}-\by^\delta\|^2_2=\|\by -\by^\delta\|_2^2 \leq \delta^2$ for every $j'$, it follows necessarily that $ \lim_{j'}\|\graphL\bx_{j'}\|_1= \infty$. This leads to an absurdity in light of \eqref{eq:lem:existence_min_1}.
\end{proof}

\vspace{0.1cm}
\noindent\Cref{prop:well-posedness}.
\vspace{-0.15cm}
\begin{proof}
	From \Cref{lem:coercivity}, the nonnegative functional $\Gamma$ is coercive on a finite dimensional vector space. 	By standard theory, there exists a minimizer, see for example \cite[Proposition 11.15]{bauschke2017convex}.
\end{proof}

\vspace{0.1cm}
\noindent\Cref{cor:uniqueness}.
\vspace{-0.15cm}
\begin{proof}
	The uniqueness of $\bx_{\textnormal{sol}}$ is straightforward. On the other hand, the uniqueness of $\sol$ is derived from the fact that if  $K$ is injective then the functional $\bx \mapsto \|K\bx - \by^\delta\|_2^2$ is strongly convex. This property leads to the strong (and therefore strict) convexity of $\Gamma$. According to \cite[Corollary 11.9]{bauschke2017convex}, the desired result follows.
\end{proof}

\subsection{Convergence and stability analysis}\label{ssec:convergence_analysis}

In this subsection, we prove \Cref{thm:convergence,thm:stability}. The main difficulty is given by the regularization term $\mathcal{R}(\bx,\by^\delta)$, which depends on the observed data $\by^\delta$. Therefore, all standard techniques can not be applied straightforwardly.

A crucial role will be played by the following two lemmas, which guarantee uniform convergence of $\graphLdelta$, and a special uniform coercivity property for $\Gamma$ in \cref{def:GammaFunctional}.

\begin{lemma}\label{lem:stability}
	Let $\Theta=\Theta(\delta,\by^\delta)$ and  $\bx_0$   be defined as in \Cref{hypothesis:Psi_convergence}. For every $\bx\in X$ it holds that 
$$
\|\graphLdelta\bx - \graphL\bx\|_1 \leq c \| \bx \|_1 \|  \Psi_\Theta^\delta - \bx_0 \|_2\to 0 \quad \mbox{as } \delta\to 0, 
$$
where $c$ is a positive constant independent of $\bx$. 
\end{lemma}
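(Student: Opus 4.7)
The plan is to bound the difference pointwise on $P$ and then sum over $p$, isolating the two sources of error: perturbation of the node measure and perturbation of the edge weights. Writing $w^\delta := w_{\Psi^\delta_\Theta}$, $\mu^\delta := \mu_{\Psi^\delta_\Theta}$, and $w_0, \mu_0$ for their counterparts built from $\bx_0$, I would use the telescoping decomposition
\begin{equation*}
\graphLdelta\bx(p) - \graphL\bx(p) = A_\delta(p) + B_\delta(p),
\end{equation*}
where
\begin{equation*}
A_\delta(p) = \left(\frac{1}{\mu^\delta(p)} - \frac{1}{\mu_0(p)}\right)\sum_{q\in P} w^\delta(p,q)\bigl(\bx(p)-\bx(q)\bigr),
\end{equation*}
and $B_\delta(p) = \mu_0(p)^{-1}\sum_q \bigl(w^\delta(p,q) - w_0(p,q)\bigr)(\bx(p)-\bx(q))$ collects the residual term.

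For $A_\delta(p)$, the Lipschitz hypothesis on $\bx \mapsto \mu_\bx(p)$ from \Cref{hypothesis:differentiability} yields $|\mu^\delta(p) - \mu_0(p)| \leq L'' \|\Psi^\delta_\Theta - \bx_0\|_2$. Since $P$ is finite and $\mu_0(p) > 0$ pointwise, setting $\mu_{\min} := \tfrac{1}{2}\min_{p\in P}\mu_0(p)$ together with \Cref{hypothesis:Psi_convergence} gives $\mu^\delta(p)\geq \mu_{\min}$ for $\delta$ sufficiently small, hence $|\mu^\delta(p)^{-1}-\mu_0(p)^{-1}|\leq L''\mu_{\min}^{-2}\|\Psi^\delta_\Theta-\bx_0\|_2$. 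Because $w_{\textnormal{d}}(p,\cdot)$ is supported on at most $\bar{\kappa}$ neighbors with $w_{\textnormal{d}}(p,q)\leq\bar{d}$, and $h_{\textnormal{i}}$ is bounded on the relevant range by some $\bar{h}_{\textnormal{i}}$ (being Lipschitz with $h_{\textnormal{i}}(0)=0$ on a bounded domain), one gets $|A_\delta(p)|\leq L''\bar{d}\bar{h}_{\textnormal{i}}\mu_{\min}^{-2}\|\Psi^\delta_\Theta-\bx_0\|_2\sum_{q\sim p}(|\bx(p)|+|\bx(q)|)$.

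For $B_\delta(p)$, factoring gives
\begin{equation*}
w^\delta(p,q)-w_0(p,q)=w_{\textnormal{d}}(p,q)\bigl[h_{\textnormal{i}}(|\Psi^\delta_\Theta(p)-\Psi^\delta_\Theta(q)|)-h_{\textnormal{i}}(|\bx_0(p)-\bx_0(q)|)\bigr],
\end{equation*}
and combining the Lipschitz property of $h_{\textnormal{i}}$ with the reverse triangle inequality $\bigl||a-b|-|c-d|\bigr|\leq |a-c|+|b-d|$ produces
\begin{equation*}
|w^\delta(p,q)-w_0(p,q)|\leq w_{\textnormal{d}}(p,q) L'\bigl(|\Psi^\delta_\Theta(p)-\bx_0(p)|+|\Psi^\delta_\Theta(q)-\bx_0(q)|\bigr)\leq 2 w_{\textnormal{d}}(p,q) L' \|\Psi^\delta_\Theta-\bx_0\|_2,
\end{equation*}
where $L'$ is the Lipschitz constant of $h_{\textnormal{i}}$. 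This yields $|B_\delta(p)|\leq 2L'\bar{d}\mu_{\min}^{-1}\|\Psi^\delta_\Theta-\bx_0\|_2\sum_{q\sim p}(|\bx(p)|+|\bx(q)|)$.

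The final step is to sum over $p\in P$ and exploit the symmetry of $w_{\textnormal{d}}$: since every node has at most $\bar{\kappa}$ neighbors, $\sum_p\sum_{q\sim p}(|\bx(p)|+|\bx(q)|)\leq 2\bar{\kappa}\|\bx\|_1$. Collecting constants produces a single $c>0$ independent of $\bx$ (depending only on $\bar{d}, \bar{\kappa}, \bar{h}_{\textnormal{i}}, L', L'', \mu_{\min}$), establishing the claimed inequality; convergence to zero is then immediate from \Cref{hypothesis:Psi_convergence}. The main obstacle I expect is securing the uniform lower bound $\mu_{\min}>0$ on $\mu^\delta(p)$, since only pointwise positivity is assumed; however, the finiteness of $P$, the continuity of $\bx\mapsto\mu_\bx(p)$ granted by \Cref{hypothesis:differentiability}, and \Cref{hypothesis:Psi_convergence} together secure it for all sufficiently small $\delta$, after which the rest is bookkeeping.
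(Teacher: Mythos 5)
Your proposal is correct and follows essentially the same route as the paper's proof: both separate the perturbation of the node measure from that of the edge weights via an add-and-subtract step, use the Lipschitz hypotheses together with the reverse triangle inequality to obtain the factor $2\|\Psi^\delta_\Theta-\bx_0\|_2$, secure a uniform lower bound on the node measure for $\delta$ small, and count neighbors via $\overline{d}$ and $\overline{\kappa}$. The only differences are bookkeeping — the paper bounds the induced matrix $1$-norm of $\graphLdelta-\graphL$ by column sums while you bound the action pointwise and sum — and one harmless slip: the boundedness of $h_{\textnormal{i}}$ should not be justified by ``$h_{\textnormal{i}}(0)=0$'' (that is a property of $h_{\textnormal{d}}$), though it follows anyway from Lipschitz continuity on the bounded range of arguments.
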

\begin{proof}
Indicating with $a^\delta_{pq}$ the elements of the matrix $A^\delta\coloneqq \graphLdelta- \graphL$, then
\begin{equation}\label{eq:lem:stability6}
	\|\graphLdelta\bx - \graphL\bx\|_1 \leq \|A^\delta\|\| \bx \|_1 =   \left(\max_{q\in P}\sum_{p\in P} |a^\delta_{pq}|\right) \|\bx \|_1 ,
\end{equation}
where $\|A^\delta\|$ is the induced matrix $1$-norm. Making explicit now the values of $a^\delta_{pq}$, we have that
\begin{align}
	\sum_{p\in P} |a^\delta_{pq}| &=  |a^\delta_{qq}| + \sum_{\substack{p\in P\\p\neq q}} |a^\delta_{pq}|\nonumber\\
	&= \left| \sum_{\substack{\ell\in P\\\ell\neq q}} \frac{w_{\Psi_\Theta^\delta}(q,\ell)}{\mu_{\Psi_\Theta^\delta}(q)} - \frac{w_0(q,\ell)}{\mu_0(q)}\right| + \sum_{\substack{p\in P\\p\neq q}}  \left| \frac{w_{\Psi_\Theta^\delta}(p,q)}{\mu_{\Psi_\Theta^\delta}(p)} - \frac{w_0(p,q)}{\mu_0(p)}\right|\nonumber\\
	&\leq \sum_{\substack{p\in P\\p\neq q}} \left| \frac{w_{\Psi_\Theta^\delta}(p,q)}{\mu_{\Psi_\Theta^\delta}(q)} - \frac{w_0(p,q)}{\mu_0(q)}\right| + \sum_{\substack{p\in P\\p\neq q}}  \left| \frac{w_{\Psi_\Theta^\delta}(p,q)}{\mu_{\Psi_\Theta^\delta}(p)} - \frac{w_0(p,q)}{\mu_0(p)}\right| \label{eq:lem:stability0} 
\end{align}
where in the last inequality we used the symmetry of the edge-weight functions $w_{\Psi_\Theta^\delta}$ and $w_0$. To simplify the notation, define 
$$
t_{\delta,p,q}\coloneqq |\Psi^\delta_\Theta(p) - \Psi^\delta_\Theta(q)|, \quad t_{0,p,q}\coloneqq |\bx_0(p) - \bx_0(q)|.
$$
Let us observe that, for every fixed triple $p, q, k \in P$,
\begin{align}
	  \left| \frac{w_{\Psi_\Theta^\delta}(p,q)}{\mu_{\Psi_\Theta^\delta}(k)} - \frac{w_0(p,q)}{\mu_0(k)}\right| &= w_{\textnormal{d}}(p,q)\left|\frac{h_{\textnormal{i}}(t_{\delta,p,q})}{\mu_{\Psi_\Theta^\delta}(k)} - \frac{h_{\textnormal{i}}(t_{0,p,q})}{\mu_0(k)}\right|\nonumber \\
	  	&= \frac{w_{\textnormal{d}}(p,q)}{\mu_{\Psi_\Theta^\delta}(k)\mu_0(k)}\left|\mu_0(k)h_{\textnormal{i}}(t_{\delta,p,q}) - \mu_{\Psi_\Theta^\delta}(k)h_{\textnormal{i}}(t_{0,p,q})\right|.\label{eq:lem:stability1}
\end{align}
Let us recall now that, by  \Cref{hypothesis:differentiability}, we have that
\begin{equation}\label{eq:lem:stability2}
|h_{\textnormal{i}}(t_{\delta,p,q}) - h_{\textnormal{i}}(t_{0,p,q})| \leq L' |t_{\delta,p,q} - t_{0,p,q}| , \quad |\mu_{\Psi_\Theta^\delta}(k) - \mu_0(k)| \leq L''_k\|\Psi_\Theta^\delta - \bx_0 \|_2.
\end{equation}
By adding and subtracting the auxiliary term $\mu_0(k)h_{\textnormal{i}}(t_{0,p,q})$,
it holds that
\begin{flalign}
	&\resizebox{1\textwidth}{!}{%
		$
		\begin{aligned}
			\left|\mu_0(k)h_{\textnormal{i}}(t_{\delta,p,q}) - \mu_{\Psi_\Theta^\delta}(k)h_{\textnormal{i}}(t_{0,p,q})\right| 
			&\leq \mu_0(k)|h_{\textnormal{i}}(t_{\delta,p,q}) - h_{\textnormal{i}}(t_{0,p,q})| + |\mu_{\Psi_\Theta^\delta}(k) - \mu_0(k)|h_{\textnormal{i}}(t_{0,p,q})\\
			&\leq L'\max_{k}\{\mu_0(k)\}  |t_{\delta,p,q} - t_{0,p,q}| + \max_k \{L''_k\}\max_{p,q}\{h_{\textnormal{i}}(t_{0,p,q})\}\|\Psi_\Theta^\delta - \bx_0 \|_2.
		\end{aligned}
		$}
	& \label{eq:lem:stability:first_bound}
\end{flalign}
Let us bound now  $|t_{\delta,p,q} - t_{0,p,q}|$:
\begin{align}
	|t_{\delta,p,q} - t_{0,p,q}| &= \left| |\Psi^\delta_\Theta(p) -\Psi^\delta_\Theta(q) | - |\bx_0(p) -\bx_0(q) |  \right|\nonumber\\
	&\leq \left| \left(\Psi^\delta_\Theta(p)-\bx_0(p)\right)  +  \left(\bx_0(q)-\Psi^\delta_\Theta(q)\right)\right|\nonumber\\
	&\leq \left|\Psi^\delta_\Theta(p)-\bx_0(p)  \right| + \left|\Psi^\delta_\Theta(q)-\bx_0(q)  \right|\nonumber\\
	&\leq 2\|\Psi^\delta_\Theta -\bx_0\|_2.\label{eq:lem:stability:second_bound}
\end{align}
Therefore, indicating with
\begin{eqnarray*}
\underline{\mu}_\delta \coloneqq \min_k\{\mu_{\Psi_\Theta^\delta}(k)\}, \quad \underline{\mu}_0\coloneqq  \min_k\{\mu_0(k)\}, \\
\overline{\mu}_0 \coloneqq \max_k\{\mu_0(k)\}, \quad
\overline{L''}\coloneqq  \max_k \{L''_k\}, \quad \overline{h}_{\textnormal{i}}\coloneqq  \max_{p,q}\{h_{\textnormal{i}}(t_{0,p,q})\},
\end{eqnarray*}
and using \Cref{eq:lem:stability:first_bound,eq:lem:stability:second_bound} into \Cref{eq:lem:stability1}, 
\begin{equation}
	  \left| \frac{w_{\Psi_\Theta^\delta}(p,q)}{\mu_{\Psi_\Theta^\delta}(k)} - \frac{w_0(p,q)g(\delta, \bx_0)}{\mu_0(k)}\right|  \leq w_{\textnormal{d}}(p,q)\frac{2L'\overline{\mu}_0 + \overline{L''}\,\overline{h}_{\textnormal{i}}}{\underline{\mu}_\delta\,\underline{\mu}_0}\|\Psi^\delta_\Theta -\bx_0\|_2. \label{eq:lem:stability4} 
\end{equation}
Now, define
\begin{equation*}
	\overline{d} \coloneqq \max_{p,q}\{w_{\textnormal{d}}(p,q)\}, \qquad \overline{\kappa} \coloneqq \max_q\left\{\operatorname{card}\{ p\in P \mid w_{\textnormal{d}}(p,q) \neq 0  \}\right\}.
\end{equation*}
Then, combining \Cref{eq:lem:stability0,eq:lem:stability4}, for every fixed $q$ it holds that
\begin{align}\label{eq:lem:stability5}
\sum_{p\in P} |a^\delta_{pq}|  &\leq  \frac{2\left( 2L'\overline{\mu}_0 + \overline{L''}\,\overline{h}_{\textnormal{i}}\right)}{\underline{\mu}_\delta\,\underline{\mu}_0}\|\Psi^\delta_\Theta -\bx_0\|_2\sum_{p\in P}w_{\textnormal{d}}(p,q)\nonumber\\
&\leq  \frac{2\overline{d}  \overline{\kappa} \left( 2L'\overline{\mu}_0 + \overline{L''}\,\overline{h}_{\textnormal{i}}\right)}{\underline{\mu}_\delta\,\underline{\mu}_0}\|\Psi^\delta_\Theta -\bx_0\|_2.
\end{align}
Finally, from  \Cref{eq:lem:stability2,hypothesis:Psi_convergence},  there exists $\delta_0$ such that
\begin{equation*}
\underline{\mu}_\delta  \geq \frac{1}{2} \underline{\mu}_0 \quad \mbox{for every}\quad 0\leq \delta\leq \delta_0,
\end{equation*}
and by defining
\begin{equation*}
	c \coloneqq \frac{4\overline{d} \overline{\kappa}\left( 2L'\overline{\mu}_0 + \overline{L''}\,\overline{h}_{\textnormal{i}}\right)}{\underline{\mu}_0^2},
\end{equation*}
the thesis follows from \Cref{eq:lem:stability6,eq:lem:stability5}.
\end{proof}

\begin{remark}\label{rem:constant_c}
The constant $c$ may depend on the dimension~$n$. However, by selecting appropriate values for $w_{\textnormal{d}}$, $h_{\textnormal{i}}$ and $\mu_\bx$, which vary based on specific applications,  it is possible to make $c$ independent of the dimension $n$ of the vector space $C(P) \simeq X$. For example, if we fix the edge-weight function  $w_{\textnormal{d}}$ independently of $n$ and set $\mu_\bx \equiv \mu$ with $\mu>0$ as a positive constant for every $\bx$, then $\overline{d}\overline{\kappa}$ becomes independent of $n$ and $\overline{L''}=0$, thereby making $c$  independent of $n$ as well.

	The choices made for the numerical experiments in this manuscript ensure that $c$ is independent of $n$. For more details, refer to \Cref{ssec:hp_validity}.
\end{remark}

\begin{lemma}\label{lem:uniform_coercivity}
	Let $\Theta=\Theta(\delta,\by^\delta)$ be the parameter choice rule as in \Cref{hypothesis:Psi_convergence} and let $\delta_k \to 0$. Write $\Theta_k \coloneqq \Theta(\delta_k, \by^{\delta_k})$ and fix $\alpha>0$. For any sequence $\{\bx_k\}$ such that $\limsup_k\|\bx_k\|_2 =\infty$, then
	\begin{equation*}
		\limsup_k \frac{1}{2}\|K\bx_k - \by\|_2^2 + \alpha \|\graphLdeltak \bx_k\|_1 = \infty.
	\end{equation*}
\end{lemma}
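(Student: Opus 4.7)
The plan is to adapt the proof of \Cref{lem:coercivity}, but to make the lower bounds \emph{uniform} in $k$ by exploiting the operator-norm convergence hidden inside \Cref{lem:stability}.

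First, I would re-read the estimate of \Cref{lem:stability}: the bound $\|\graphLdeltak\bx-\graphL\bx\|_1\leq c\|\bx\|_1\|\Psi^{\delta_k}_{\Theta_k}-\bx_0\|_2$ is actually a bound on the induced matrix $1$-norm of $\graphLdeltak-\graphL$. Combined with \Cref{hypothesis:Psi_convergence}, this gives $\|\graphLdeltak-\graphL\|_{1\to 1}\to 0$, that is, uniform operator convergence. Moreover, by \Cref{lem:null_stability} all the operators $\graphLdeltak$ share the common null space $V$, and the constants $\gamma_1,\gamma_2>0$ from the proof of \Cref{lem:coercivity} (applied with $\bx_0$) satisfy
\begin{equation*}
\inf_{\bu\in V^\perp\setminus\{\boldsymbol 0\}}\frac{\|\graphL\bu\|_1}{\|\bu\|_1}\geq \gamma_1,\qquad \inf_{\bv\in V\setminus\{\boldsymbol 0\}}\frac{\|K\bv\|_2}{\|\bv\|_2}\geq \gamma_2.
\end{equation*}

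The key uniform injectivity estimate I would establish is: for every $\bu\in V^\perp$,
\begin{equation*}
\|\graphLdeltak\bu\|_1\geq \|\graphL\bu\|_1-\|(\graphLdeltak-\graphL)\bu\|_1\geq\bigl(\gamma_1-c\|\Psi^{\delta_k}_{\Theta_k}-\bx_0\|_2\bigr)\|\bu\|_1,
\end{equation*}
so there exists $k_0$ such that $\|\graphLdeltak\bu\|_1\geq(\gamma_1/2)\|\bu\|_1$ for all $\bu\in V^\perp$ and all $k\geq k_0$. Crucially, the constant $\gamma_1/2$ is independent of $k$.

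With this in hand, the coercivity argument from \Cref{lem:coercivity} can be repeated almost verbatim. Let $\pi,\pi_\perp$ denote the projections onto $V$ and $V^\perp$, pass to a subsequence (still denoted $\{\bx_k\}$) along which $\|\bx_k\|_2\to\infty$, and split into two cases according as $\|\pi_\perp\bx_k\|_1\to\infty$ or $\|\pi_\perp\bx_k\|_1$ stays bounded along a further subsequence. In the first case, since $\pi\bx_k\in\ker(\graphLdeltak)=V$, I would use the uniform lower bound above to get
\begin{equation*}
\alpha\|\graphLdeltak\bx_k\|_1=\alpha\|\graphLdeltak\pi_\perp\bx_k\|_1\geq \tfrac{\alpha\gamma_1}{2}\|\pi_\perp\bx_k\|_1\to\infty.
\end{equation*}
In the second case, necessarily $\|\pi\bx_k\|_2\to\infty$, and then
\begin{equation*}
\|K\bx_k-\by\|_2\geq \|K\pi\bx_k\|_2-\|K\pi_\perp\bx_k-\by\|_2\geq \gamma_2\|\pi\bx_k\|_2-C,
\end{equation*}
with $C$ a finite constant because $\|\pi_\perp\bx_k\|_1$ and $\|\by\|_2$ are bounded; squaring yields $\|K\bx_k-\by\|_2^2\to\infty$. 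In both cases the target functional tends to infinity, proving the claim.

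The main obstacle is the first step: extracting a uniform (in $k$) lower bound of the form $\|\graphLdeltak\bu\|_1\gtrsim\|\bu\|_1$ on $V^\perp$. Without the matrix-norm interpretation of \Cref{lem:stability}, one would only control each fixed $\bx$ pointwise, which is not enough to handle sequences with $\|\bx_k\|_2\to\infty$; this is exactly why reading Lemma~\ref{lem:stability} as a genuine operator-norm bound is essential.
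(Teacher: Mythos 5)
Your proposal is correct and follows essentially the same route as the paper: both proofs read \Cref{lem:stability} as a uniform (operator-norm) perturbation bound on $V^\perp$, deduce a $k$-independent lower bound $\|\graphLdeltak\bu\|_1\gtrsim\|\bu\|_1$ for $\bu\in V^\perp$ once $k$ is large (the paper writes this as $\gamma_k\geq\gamma_0/2$ for $k\geq N$ and then takes a uniform $\hat\gamma>0$ over all $k$), and then rerun the two-case splitting of \Cref{lem:coercivity} along $\pi$ and $\pi_\perp$. The only cosmetic difference is that you handle the finitely many indices $k<k_0$ by restricting to a subsequence tending to infinity, while the paper absorbs them into a single constant $\hat\gamma$; both are fine.
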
	
\begin{proof}
	Let $V$ be the invariant null space from \Cref{lem:null_stability}. Define
	\begin{equation*}
		\gamma_k \coloneqq  \inf_{\substack{\bu \in V^\perp\\\|\bu\|_1=1}}\|\graphLdeltak \bu\|_1>0 \quad \mbox{and} \quad \gamma_0 \coloneqq \inf_{\substack{\bu \in V^\perp\\\|\bu\|_1=1}}\|\graphL \bu\|_1 >0.
	\end{equation*}
	By \Cref{lem:stability}, it holds that
	\begin{equation*}
		\forall\, \bu\in V^\perp \mbox{ s.t. } \|\bu\|_1=1, \quad \|\graphL \bu\|_1 - \frac{\gamma_0}{2}   \leq  \|\graphLdeltak \bu\|_1  \qquad \mbox{for } k\geq N=N(\gamma_0).
	\end{equation*}
	Therefore, 
	\begin{equation*}
		\gamma_k \geq \frac{\gamma_0}{2} \qquad \forall k \geq N(\gamma_0).
	\end{equation*}
	In particular, there exists $\hat{\gamma}>0$ such that
	\begin{equation*}
		\|\graphLdeltak \bx\|_1 \geq\hat{\gamma}\|\pi_\perp\bx\|_1 \qquad \forall\, \bx,\; \forall\, k.
	\end{equation*}
	The rest of the proof follows like in \Cref{lem:coercivity}.
\end{proof}		

The next theorem presents a convergence result for the \graphLa method  \cref{graphModel}. The overall proof follows a fairly standard approach, as for example in \cite[Theorem 3.26]{scherzer2009variational}. However, since the regularizing term in \cref{graphModel} depends on the data $\by^\delta$ as well, it involves a few nontrivial technical aspects. 

We will use a slight modification of the notation introduced in \cref{def:GammaFunctional}. Specifically,
\begin{equation*}
	\Gamma_k(\bx)\coloneqq \frac{1}{2}\|K\bx - \by^{\delta_k}\|_2^2 + \alpha_k \|\graphLdeltak\bx\|_1.
\end{equation*}

\vspace{0.1cm}
\noindent\Cref{thm:convergence}.
\vspace{-0.15cm}
\begin{proof}
	The sequence $\{\bx_k\}$ is well-posed thanks to \Cref{prop:well-posedness}. Fix a graph-minimizing solution $\bx_{\textnormal{sol}}$ as in \Cref{def:graph-minimizing_solution}, which exists because of \Cref{prop:existence_min}.  Then, by definition, it holds that
	\begin{align}\label{eq:thm:convergence:c}
		\Gamma_k(\bx_k) &\leq \Gamma_k(\bx_{\textnormal{sol}}) \leq \frac{\delta_k^2}{2} + \alpha_k \|\graphL\bx_{\textnormal{sol}}\|_1 + \alpha_kc\|\bx_{\textnormal{sol}}\|_1 \|\Psi^{\delta_k}_{\Theta_k} - \bx_0 \|_2 \to 0
	\end{align}
as $k \to \infty$,	where the last inequality comes from  \Cref{lem:stability}, and the convergence to zero is granted by \cref{eq:thm:convergence:a}. Therefore, $\|K\bx_k - \by^{\delta_k}\|_2^2 \to 0$, and in particular 
	\begin{equation}\label{eq:thm:convergence:d}
		\|K\bx_k - \by\|_2 \leq \|K\bx_k - \by^{\delta_k}\|_2 + \delta_k \to 0 \quad \mbox{as } k\to \infty.
	\end{equation}
	Since 
	$$
	\alpha_k\|\graphLdeltak\bx_k\|_1 \leq \Gamma_k(\bx_k),
	$$
	then by \cref{eq:thm:convergence:b,eq:thm:convergence:c},  we get that
	\begin{equation}\label{eq:thm:convergence:e}
		\limsup_k\|\graphLdeltak\bx_k\|_1 \leq \|\graphL\bx_{\textnormal{sol}}\|_1.
	\end{equation}
	Let $\alpha^+ \coloneqq \max\{\alpha_k \mid k \in \N \}$. Then, combining \cref{eq:thm:convergence:d,eq:thm:convergence:e},
	\begin{equation*}
		\frac{1}{2}\|K\bx_k - \by\|_2^2 + \alpha^+\|\graphLdeltak\bx_k\|_1 \leq  c < \infty,
	\end{equation*}
	and from \Cref{lem:uniform_coercivity} we deduce that $\{\bx_k\}$ is bounded. Therefore, there exists a convergent subsequence $\{\bx_{k'}\}$ which converges to a point $\bx^*$. From \cref{eq:thm:convergence:d}, it holds that $K\bx^* = \by$. Moreover, by the boundedness of $\{\bx_{k'}\}$ and the uniform convergence granted by \Cref{lem:stability}, we infer that
	\begin{equation*}
		\|\graphL\bx^*\|_1 = \lim_{k'}\|\Delta_{\Psi^{\delta_{k'}}_{\Theta_{k'}}}\bx_{k'}\|_1.
	\end{equation*}
	Applying \cref{eq:thm:convergence:e}, we finally get
	\begin{equation*}
		\|\graphL\bx^*\|_1 = \lim_{k'}\|\Delta_{\Psi^{\delta_{k'}}_{\Theta_{k'}}}\bx_{k'}\|_1	\leq \|\graphL\bx_{\textnormal{sol}}\|_1 \leq \|\graphL\bx^*\|_1.
	\end{equation*}
	That is, $\bx^*$ is a graph-minimizing solution. If the graph-minimization solution is unique, then we have just proven that every subsequence of $\{\bx_k\}$ has a subsequence converging to $\bx_{\textnormal{sol}}$, and therefore $\bx_k \to \bx_{\textnormal{sol}}$ by a standard topological argument. 
\end{proof}

We proceed now to prove the stability result in \Cref{thm:stability}. However, as the first step we demonstrate that the standard Tikhonov reconstruction method coupled with the discrepancy principle satisfies \Cref{hyp:stability}.
\begin{example}\label{example:Tikhonov}
    Consider a standard Tikhonov reconstruction method, that is
\begin{equation*}
	\Psi_{\Theta}(\by) \coloneqq \left(K^TK +\Theta I\right)^{-1}K^T\by,
\end{equation*}
where $\Theta \in (0,\infty)$. Let $\Theta = \Theta(\delta, \by^\delta)$ defined by the discrepancy principle as per \cite[Equation (4.57)]{engl1996regularization}. Then   $\Theta_k \to \Theta=\Theta(\delta,\by^{\delta}) \ \mathrm{for} \ k\rightarrow \infty$. Setting
\begin{equation*}
    \mathcal{T}^k = (K^TK+\Theta_kI)^{-1} \quad \mbox{and} \quad   \mathcal{T}=(K^TK+\Theta I)^{-1},
\end{equation*}
it holds that
\begin{flalign*}
	\|\Psi_{\Theta_k}(\by^{\delta_k})-\Psi_{\Theta}(\by^{\delta})\|_2\le \underbrace{\|(\mathcal{T}^k-\mathcal{T})K^T\by^{\delta_k}\|_2}_{\mathbf{I}} + \underbrace{\|\mathcal{T}K^T(\by^{\delta}-\by^{\delta_k})\|_2}_{\mathbf{II}}. 
\end{flalign*}
Now,
\begin{flalign*}
	\mathbf{I}\le |\Theta -\Theta_k|\|\mathcal{T}^k\| \|\mathcal{T}K^T\|\|\by^{\delta_k}\|_2\le \frac{|\Theta-\Theta_k|}{2\Theta_k\sqrt{\Theta}}\|\by^{\delta_k}\|_2 \to 0,
\end{flalign*} 
and
\begin{flalign*}
	\mathbf{II}\le\|(K^TK+\Theta I)^{-1}K^T\|\|\by^{\delta}-\by^{\delta_k}\|_2\le\frac{1}{2\sqrt{\Theta}}\|\by^{\delta}-\by^{\delta_k}\| \to 0.
\end{flalign*}
\end{example}
For the proof of \Cref{thm:stability} we need a couple of preliminary lemmas.
\begin{lemma}\label{lemma:Gamma_inequalities}
	For all $\bx\in X$ and $\by^{\delta_{k_1}},\by^{\delta_{k_2}}\in Y$, we have
	\begin{equation*}
		\Gamma_{k_1}(\bx)\le 2\Gamma_{k_2}(\bx)+\|\by^{\delta_{k_1}}-\by^{\delta_{k_2}}\|_2^2+\|(\Delta_{\Psi_{\Theta_{k_1}}^{\delta_{k_1}}}-\Delta_{\Psi_{\Theta_{k_2}}^{\delta_{k_2}}})\bx\|_1
	\end{equation*}
\end{lemma}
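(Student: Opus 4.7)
The plan is essentially two triangle-inequality steps, one for the data-fidelity term and one for the regularization term, followed by a simple accounting to collect everything into the form $2\Gamma_{k_2}(\bx)+\cdots$.

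First, I would treat the quadratic data-fidelity part by writing
$K\bx-\by^{\delta_{k_1}}=(K\bx-\by^{\delta_{k_2}})+(\by^{\delta_{k_2}}-\by^{\delta_{k_1}})$
and applying the elementary inequality $(a+b)^2\le 2a^2+2b^2$, which yields
\[
\tfrac12\|K\bx-\by^{\delta_{k_1}}\|_2^2\ \le\ \|K\bx-\by^{\delta_{k_2}}\|_2^2+\|\by^{\delta_{k_1}}-\by^{\delta_{k_2}}\|_2^2.
\]
For the regularization term, the $\ell^1$-triangle inequality applied to the identity $\Delta_{\Psi_{\Theta_{k_1}}^{\delta_{k_1}}}\bx=\Delta_{\Psi_{\Theta_{k_2}}^{\delta_{k_2}}}\bx+(\Delta_{\Psi_{\Theta_{k_1}}^{\delta_{k_1}}}-\Delta_{\Psi_{\Theta_{k_2}}^{\delta_{k_2}}})\bx$ gives
\[
\|\Delta_{\Psi_{\Theta_{k_1}}^{\delta_{k_1}}}\bx\|_1\ \le\ \|\Delta_{\Psi_{\Theta_{k_2}}^{\delta_{k_2}}}\bx\|_1+\|(\Delta_{\Psi_{\Theta_{k_1}}^{\delta_{k_1}}}-\Delta_{\Psi_{\Theta_{k_2}}^{\delta_{k_2}}})\bx\|_1.
\]

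Finally, summing the two bounds (with the common regularization parameter $\alpha$ that appears in the stability setting of \Cref{thm:stability}, where $\alpha$ is fixed), I would observe that
\[
\|K\bx-\by^{\delta_{k_2}}\|_2^2+\alpha\|\Delta_{\Psi_{\Theta_{k_2}}^{\delta_{k_2}}}\bx\|_1\ \le\ 2\Gamma_{k_2}(\bx),
\]
because $2\Gamma_{k_2}(\bx)=\|K\bx-\by^{\delta_{k_2}}\|_2^2+2\alpha\|\Delta_{\Psi_{\Theta_{k_2}}^{\delta_{k_2}}}\bx\|_1$ and the second summand is nonnegative. Combining everything then delivers the claimed bound. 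There is essentially no conceptual obstacle here; the only subtlety worth flagging is that one must spend the factor $2$ in front of $\Gamma_{k_2}(\bx)$ on \emph{both} the quadratic part (through $(a+b)^2\le 2a^2+2b^2$) and on absorbing $\alpha\|\Delta_{\Psi_{\Theta_{k_2}}^{\delta_{k_2}}}\bx\|_1$ once, which is exactly what the factor $2$ allows. The lemma will then be used in the proof of \Cref{thm:stability} to transfer coercivity/boundedness information between consecutive values of the index $k$.
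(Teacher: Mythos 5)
Your proof is correct and follows essentially the same route as the paper: the elementary inequality $(a+b)^2\le 2a^2+2b^2$ on the fidelity term, the $\ell^1$ triangle inequality on the regularization term, and absorption of the two leftover $k_2$-terms into $2\Gamma_{k_2}(\bx)$. The only (cosmetic) difference is that your argument naturally produces $\alpha\|(\Delta_{\Psi_{\Theta_{k_1}}^{\delta_{k_1}}}-\Delta_{\Psi_{\Theta_{k_2}}^{\delta_{k_2}}})\bx\|_1$ rather than the stated bound without the $\alpha$, a harmless constant discrepancy that is present in the paper's own one-line derivation as well and is irrelevant for the use in the stability theorem.
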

\begin{proof}
	By standard $p$-norm inequalities 
	\begin{flalign*}
		\Gamma_{k_1}(\bx)&=\frac{1}{2}\|K\bx-\by^{\delta_{k_1}}\|_2^2+\alpha\|\Delta_{\Psi_{\Theta_{k_1}}^{\delta_{k_1}}}\bx\|_1\\& \le \|K\bx-\by^{\delta_{k_2}}\|_2^2+\|\by^{\delta_{k_1}}-\by^{\delta_{k_2}}\|_2^2+\alpha\|\Delta_{\Psi_{\Theta_{k_1}}^{\delta_{k_1}}}\bx\|_1\\&\le2\Gamma_{k_2}(\bx)+\|\by^{\delta_{k_1}}-\by^{\delta_{k_2}}\|_2^2+\|(\Delta_{\Psi_{\Theta_{k_1}}^{\delta_{k_1}}}-\Delta_{\Psi_{\Theta_{k_2}}^{\delta_{k_2}}})\bx\|_1.
	\end{flalign*}  
\end{proof}

\begin{lemma}\label{lemma:Lapl_bound}
	Let ${\delta_k}$, $\Theta_k$ and $\Theta$ defined as in  \Cref{hyp:stability}. It holds that 
	$$
	\|\Delta_{\Psi_{\Theta_k}^{\delta_k}}\bx-\Delta_{\Psi_{\Theta}^{\delta}}\bx\|_1 \leq c\| \bx \|_1 \|\Psi^{\delta_k}_{\Theta_k} -\Psi_{\Theta}^{\delta}\|_2 \to 0 \quad \mbox{as } k\to \infty,
	$$
 where $c$ is a positive constant independent of $\bx$.
\end{lemma}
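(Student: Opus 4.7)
The plan is to prove Lemma~\ref{lemma:Lapl_bound} by repeating, essentially line-for-line, the argument of Lemma~\ref{lem:stability}, but with the pair $(\Psi_\Theta^\delta,\bx_0)$ replaced by $(\Psi_{\Theta_k}^{\delta_k},\Psi_\Theta^\delta)$. A close inspection of the proof of Lemma~\ref{lem:stability} reveals that the only property actually used about $\bx_0$ is the Lipschitz control given by Hypothesis~\ref{hypothesis:differentiability}; nowhere is it exploited that $\bx_0$ is a limit. Hence that proof really establishes the pointwise statement: for any $\bu,\bv\in C(P)$,
$$\|\Delta_\bu\bx-\Delta_\bv\bx\|_1\le c(\bu,\bv)\,\|\bx\|_1\,\|\bu-\bv\|_2,$$
with an explicit constant $c(\bu,\bv)$ built out of $\overline{d}$, $\overline{\kappa}$, the Lipschitz constants $L'$ and $\overline{L''}$ of $h_\textnormal{i}$ and $\mu_\bullet$, and the extremal values $\underline{\mu},\overline{\mu},\overline{h}_\textnormal{i}$ of the node measures and of $h_\textnormal{i}$ at $\bv$.

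Concretely, the steps I would carry out are the following. First, bound the $\ell^1$-norm by the induced matrix $1$-norm, $\|\Delta_{\Psi_{\Theta_k}^{\delta_k}}\bx-\Delta_{\Psi_\Theta^\delta}\bx\|_1\leq (\max_q\sum_p|a_{pq}^k|)\|\bx\|_1$, where $a_{pq}^k$ are the entries of $\Delta_{\Psi_{\Theta_k}^{\delta_k}}-\Delta_{\Psi_\Theta^\delta}$. Second, using symmetry of the edge-weight function and the triangle inequality exactly as in \cref{eq:lem:stability0}, split each column sum into two pieces whose summands have the common geometric factor $w_\textnormal{d}(p,q)$. Third, apply the add-and-subtract trick of \cref{eq:lem:stability:first_bound} together with the Lipschitz bounds
$$|h_\textnormal{i}(t)-h_\textnormal{i}(s)|\le L'|t-s|,\qquad |\mu_\bu(p)-\mu_\bv(p)|\le L_p''\|\bu-\bv\|_2,$$
and the pointwise estimate $||\bu(p)-\bu(q)|-|\bv(p)-\bv(q)||\le 2\|\bu-\bv\|_2$ from \cref{eq:lem:stability:second_bound}, to conclude that each entry difference is bounded by a multiple of $w_\textnormal{d}(p,q)\,\|\Psi_{\Theta_k}^{\delta_k}-\Psi_\Theta^\delta\|_2$.

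Finally, the mild obstacle is to ensure that the resulting constant $c$ can be taken independently of~$k$. The quantities $\overline{\mu}_{\Psi_\Theta^\delta}$, $\overline{L''}$ and $\overline{h}_\textnormal{i}$ depend on the fixed reference function $\Psi_\Theta^\delta$ and are therefore harmless; the only potentially dangerous term is $\underline{\mu}_{\Psi_{\Theta_k}^{\delta_k}}$ appearing in the denominator of the analogue of \cref{eq:lem:stability5}. By \Cref{hyp:stability} we have $\|\Psi_{\Theta_k}^{\delta_k}-\Psi_\Theta^\delta\|_2\to 0$, hence by the Lipschitz continuity of $\bu\mapsto\mu_\bu(p)$ we obtain $\mu_{\Psi_{\Theta_k}^{\delta_k}}(p)\to\mu_{\Psi_\Theta^\delta}(p)$ uniformly in $p\in P$. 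Thus for all $k$ sufficiently large $\underline{\mu}_{\Psi_{\Theta_k}^{\delta_k}}\ge\tfrac{1}{2}\underline{\mu}_{\Psi_\Theta^\delta}$, and absorbing this into the constant yields a $c$ independent of $k$ and $\bx$. The convergence to $0$ as $k\to\infty$ is then immediate from \Cref{hyp:stability}.
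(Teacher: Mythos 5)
Your proposal is correct and follows essentially the same route as the paper, whose proof of \Cref{lemma:Lapl_bound} is simply the remark that one repeats the argument of \Cref{lem:stability} with $(\Psi_\Theta^\delta,\bx_0)$ replaced by $(\Psi_{\Theta_k}^{\delta_k},\Psi_\Theta^\delta)$ and invokes \Cref{hyp:stability} in place of \Cref{hypothesis:Psi_convergence}. Your additional care about bounding $\underline{\mu}_{\Psi_{\Theta_k}^{\delta_k}}$ away from zero for large $k$ is exactly the analogue of the final step of \Cref{lem:stability} and is a welcome explicit check.
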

\begin{proof}
	The proof is similar to \Cref{lem:stability} using  \Cref{hyp:stability}.
\end{proof}

\vspace{0.1cm}
\noindent\Cref{thm:stability}.
\vspace{-0.15cm}
\begin{proof}
	Because $\bx_k$ is a minimizer of $\Gamma_k$, we have 
	\begin{equation}
		\Gamma_k(\bx_k)\le \Gamma_k(\bx),\quad \forall\,\bx\in X.
	\end{equation}
	Choose now a vector $\bar{\bx}\in X$. By applying the previous equation to $\bx=\bar{\bx}$ and using twice  \Cref{lemma:Gamma_inequalities}, it follows that
	\begin{flalign*}
		\Gamma(\bx_k)&\le 2\Gamma_k(\bx_k)+\|\by^{\delta}-\by^{\delta_k}\|_2^2+\|(\Delta_{\Psi_{\Theta}^{\delta}}-\Delta_{\Psi_{\Theta_k}^{\delta_k}})\bx_k\|_1\\&\le2\Gamma_k(\bar{\bx})+\|\by^{\delta}-\by^{\delta_k}\|_2^2+\|(\Delta_{\Psi_{\Theta}^{\delta}}-\Delta_{\Psi_{\Theta_k}^{\delta_k}})\bx_k\|_1\\&\le 4\Gamma(\bar{\bx})+3\|\by^{\delta}-\by^{\delta_k}\|_2^2+2\|(\Delta_{\Psi_{\Theta}^{\delta}}-\Delta_{\Psi_{\Theta_k}^{\delta_k}})\bar{\bx}\|_1+\|(\Delta_{\Psi_{\Theta}^{\delta}}-\Delta_{\Psi_{\Theta_k}^{\delta_k}})\bx_k\|_1\\
  &\leq 4\Gamma(\bar{\bx})+3\|\by^{\delta}-\by^{\delta_k}\|_2^2 + (2\|\bar{\bx}\|_1 + \|\bx_k\|_1)\|\Psi_\Theta^\delta - \Psi_{\Theta_k}^{\delta_k}\|_2,
	\end{flalign*}
where we used \Cref{lemma:Lapl_bound} in the last inequality. Arguing as in the proof of \Cref{thm:convergence} and adapting \Cref{lem:uniform_coercivity}, it is possible to show that $\{\bx_k\}$ is bounded.	From  \Cref{hyp:stability}, and since $\by^{\delta_k}$ converges to $\by^{\delta}$,  then there exist $k_0\in\mathbb{N}$ such that
	\begin{equation*}
		M:=4\Gamma(\bar{\bx})+1\ge \Gamma(\bx_k),\quad \forall\, k\ge k_0.
	\end{equation*}
 The thesis now follows  by adapting the proof in  \cite[Theorem 3.23]{scherzer2009variational}.
\end{proof}

\section*{Acknowledgments}
We thank the anonymous referees for their valuable feedback, which improved this manuscript.

\bibliographystyle{siamplain}
\bibliography{graphLa+.bib}	
\end{document}